\newtheorem{Theorem}{Theorem}[section]
\newtheorem{Lemma}[Theorem]{Lemma}
\newtheorem{Corollary}[Theorem]{Corollary}
\newtheorem{Proposition}[Theorem]{Proposition}
\newtheorem{Observation}[Theorem]{Observation}
\newtheorem{Question}[Theorem]{Question}
\theoremstyle{definition}
\newtheorem{Definition}[Theorem]{Definition}
\newtheorem{Remark}[Theorem]{Remark}
\newtheorem{Data}[Theorem]{Data}
\newtheorem{notation}[Theorem]{Notation}
\numberwithin{equation}{section}
\begin{document}

\baselineskip=16pt

\title[The Weak Lefschetz Property for monomial complete intersections]{\bf The Weak Lefschetz Property for monomial complete intersections in positive characteristic}

\author[Andrew R. Kustin and Adela Vraciu]
{ Andrew R. Kustin  and Adela Vraciu}

\thanks{AMS 2010 {\em Mathematics Subject Classification}.
Primary 13D02; Secondary 13A35, 13E10, 13C40.}

\thanks{Both authors were  supported in part by the NSA}


\thanks{Keywords: Artinian Rings, Complete Intersections,  Frobenius Endomorphism, Weak Lefschetz Property}

\address{Department of Mathematics, University of South Carolina,
Columbia, SC 29208} \email{kustin@math.sc.edu}

\address{Department of Mathematics, University of South Carolina,
Columbia, SC 29208} \email{vraciu@math.sc.edu}

\begin{abstract}  Let $A=\pmb k[x_1,\dots,x_n]/{(x_1^d,\dots,x_n^d)}$, where $\pmb k$ is an infinite field.
If $\pmb k$ has characteristic zero, then Stanley proved that $A$ has the  Weak Lefschetz Property (WLP).
Henceforth, $\pmb k$ has positive characteristic $p$.
If $n=3$, then Brenner and Kaid have identified all $d$, as a function of $p$,  for which $A$ has the WLP. 
In the present paper, the analogous project is carried out for $4\le n$. If $4\le n$ and $p=2$, then $A$ has the WLP if and only if $d=1$. If $n=4$ and $p$ is odd, then we prove that  $A$ has the WLP if and only if 
$d=kq+r$ for integers $k,q,d$ with $1\le k\le \frac{p-1}2$, $r\in\left\{\frac{q-1}2,\frac{q+1}2\right\}$, and $q=p^e$ for some non-negative integer $e$. If $5\le n$, then we prove that $A$ has the WLP if and only if $\left\lfloor\frac{n(d-1)+3}2\right\rfloor\le p$.
We first  interpret the WLP for the ring ${{\pmb k}[x_1, \ldots, x_{n}]}/{(x_1^d, \ldots, x_{n}^d)}$ in terms of the degrees of the non-Koszul relations on the elements $x_1^d, \ldots, x_{n-1}^d, (x_1+ \ldots +x_{n-1})^d$ in the polynomial ring $\pmb k[x_1, \ldots, x_{n-1}]$.   We then  exhibit a sufficient condition for ${{\pmb k}[x_1, \ldots, x_{n}]}/{(x_1^d, \ldots, x_{n}^d)}$ to have the WLP. This condition is expressed in terms of the non-vanishing in $\pmb k$ of determinants   of various Toeplitz matrices of binomial coefficients.  Frobenius techniques are used to produce relations of low degree on  $x_1^d$, $\ldots$, $x_{n-1}^d$, ${(x_1+ \ldots +x_{n-1})^d}$. From this we    obtain   a necessary condition for $A$ to have the WLP. We prove that the necessary condition is  sufficient by showing that the relevant determinants   are non-zero in $\pmb k$.
\end{abstract}

\maketitle

\section{Introduction}
Let $A=\bigoplus A_i$ be a standard graded algebra over the field $\pmb k$. Then $A$ has the {\it Weak Lefschetz Property} (WLP) if there exists a linear form $L$ of $A_1$ such that multiplication by $L$ from $A_i\to A_{i+1}$ has maximal rank for each index $i$. In this case, $L$ is called a {\it Lefschetz element} of $A$. The set of Lefschetz elements forms a (possibly empty) Zariski open subset of $A_1$. 

The WLP has been much investigated in recent times; see, for example, \cite{BK,CN,HSS,LZ,MMN,MMN-p}. Our interest in this topic is sparked by the following result, which is established in \cite{KRV}.

\begin{Theorem}\label{intro} Let $\pmb k$ be a field, $P$ be the polynomial ring $\pmb k[x_1,x_2,x_3]$, $n$, $N$, and $a$ be positive integers, $f=x_1^n+x_2^n+x_3^n$, $R_n=P/(f)$, and $\overline{R}_{n,N}=R_n/(x_1^N,x_2^N,x_3^N)$. Assume that $n$ does not divide $N$.   \begin{enumerate}
\item The following two statements are equivalent.
\begin{enumerate}
\item The ring $\bar {\pmb k}[x_1,x_2,x_3]/(x_1^a,x_2^a,x_3^a)$ does not have the WLP, where $\bar {\pmb k}$ is the algebraic closure of $\pmb k$.
\item There exists a non-zero relation of degree less than $\lfloor \frac {3a}2\rfloor$ on $x_1^a,x_2^a,(x_1+x_2)^a$ in $\pmb  k[x_1,x_2]$.\end{enumerate}
\item The following three statements are equivalent.
\begin{enumerate}
\item The $R_n$-module $\overline{R}_{n,N}$ has finite projective dimension.
\item The ring $\overline{R}_{n,N}$ has Cohen-Macaulay type $2$.
\item The algebra $\operatorname{Tor}_{\bullet}^P(\overline{R}_{n,N},\pmb k)$ is in the class ${\text{\bf H}}(3,2)$ of \cite{AKM,A89,A11}. \end{enumerate}
\item The assertions of {\rm(2)} hold if and only if the
the assertions of {\rm(1)} hold for at least one of the integers   $ a=\lfloor\frac Nn\rfloor$ or $a=\lceil \frac Nn\rceil$. In particular,
$$\mathrm{pd}_{R_n}\overline{R}_{n,N}<\infty\iff \bar {\pmb k}[x_1,x_2,x_3]/(x_1^a,x_2^a,x_3^a)\text{ does not have the WLP for $\textstyle a=\lfloor\frac Nn\rfloor$ or $\textstyle a=\lceil \frac Nn\rceil$}.$$
\end{enumerate}\end{Theorem}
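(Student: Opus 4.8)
The plan is to prove the three parts in the order (1), (2), (3): part~(1) is self-contained, and parts~(2) and (3) combine to identify the homological condition in (2) with the Lefschetz condition in~(1).

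\emph{Part (1).} First I would reduce to the distinguished linear form. The ideal $(x_1^a,x_2^a,x_3^a)$ is fixed by the coordinate torus and by permuting the variables, so the Lefschetz locus of $B:=\bar{\pmb k}[x_1,x_2,x_3]/(x_1^a,x_2^a,x_3^a)$ is a torus-stable Zariski-open subset of $B_1$; if it is non-empty it meets the open set where all three coordinates are non-zero, and any such point can be rescaled to $L=x_1+x_2+x_3$. Hence $B$ has the WLP iff $\times L\colon B_i\to B_{i+1}$ has maximal rank for every $i$. Now $B$ is Artinian Gorenstein with socle degree $s=3a-3$, and the perfect pairing $B_i\times B_{s-i}\to B_s$ identifies $\times L\colon B_i\to B_{i+1}$ with the transpose of $\times L\colon B_{s-1-i}\to B_{s-i}$; so $\times L|_{B_i}$ is injective iff $\times L\colon B_{s-1-i}\to B_{s-i}$ is onto iff $C_{s-i}=0$, where $C:=B/LB\cong\bar{\pmb k}[x_1,x_2]/(x_1^a,x_2^a,(x_1+x_2)^a)$. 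Thus $B$ has the WLP iff for every $i$ one has $C_{i+1}=0$ or $C_{s-i}=0$. Since $C$ is a quotient of $\bar{\pmb k}[x_1,x_2]$ by three forms of degree $a$, the Hilbert--Burch theorem gives a minimal resolution with middle term $\bar{\pmb k}[x_1,x_2](-a)^3$ and last term $\bar{\pmb k}[x_1,x_2](-b_1)\oplus\bar{\pmb k}[x_1,x_2](-b_2)$, $b_1\le b_2$; comparing Hilbert series forces $b_1+b_2=3a$ and shows $C_j=0\iff j\ge b_2-1$. Feeding this into the vanishing criterion, an elementary computation shows the WLP fails exactly when $b_2\ge\frac{3a}{2}+1$, i.e.\ $b_1<\lfloor\frac{3a}{2}\rfloor$, i.e.\ there is a non-zero syzygy of degree $<\lfloor\frac{3a}{2}\rfloor$ on $x_1^a,x_2^a,(x_1+x_2)^a$; and graded Betti numbers are stable under field extension, so this condition may be read over $\pmb k$. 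This proves (1).

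\emph{Part (2).} Here $R_n=P/(f)$ is a two-dimensional Gorenstein hypersurface ring, $\overline{R}_{n,N}$ is Artinian, and $\operatorname{pd}_P\overline{R}_{n,N}=3$. For (a)$\Rightarrow$(b): if $\operatorname{pd}_{R_n}\overline{R}_{n,N}<\infty$ then Auslander--Buchsbaum forces $\operatorname{pd}_{R_n}\overline{R}_{n,N}=2$, and a rank count forces the minimal $R_n$-resolution to have the shape $0\to R_n(-c_1)\oplus R_n(-c_2)\to R_n(-N)^3\to R_n\to\overline{R}_{n,N}\to0$; splicing this with $0\to P(-n)\to P\to R_n\to0$ (the total complex of the obvious double complex) yields a $P$-resolution of $\overline{R}_{n,N}$ of length $3$ whose top term has rank $2$, and one checks this rank cannot drop under cancellation (and is $\ge1$, as $\overline{R}_{n,N}$ is not Gorenstein in the relevant cases), so the Cohen--Macaulay type is $2$. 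For (b)$\Leftrightarrow$(c)$\Rightarrow$(a): since the defining ideal of $\overline{R}_{n,N}$ has exactly the four minimal generators $x_1^N,x_2^N,x_3^N,f$, the multiplicative structure of $\operatorname{Tor}^P_\bullet(\overline{R}_{n,N},\pmb k)$ is rigid enough that type~$2$ leaves only the class $\mathbf H(3,2)$ in Avramov's classification of Tor-algebras of codimension-three Artinian algebras (giving (c)), and conversely membership in $\mathbf H(3,2)$ forces the minimal $P$-resolution to be precisely the total complex above --- hence to come from a finite $R_n$-free resolution, giving (a). The substantive point is this last step: excluding the competing type-$2$ classes and extracting the finite $R_n$-resolution from the algebra structure.

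\emph{Part (3).} The bridge rests on the observation that $R_n$ is a free module of rank $n^3$ over the polynomial subring $S:=\pmb k[x_1^n,x_2^n]\cong\pmb k[u,v]$, with basis the monomials $x_1^ix_2^jx_3^\ell$ ($0\le i,j,\ell\le n-1$), and that in $R_n$ one has $x_3^n=-(u+v)$. Writing $N=an+r$ with $1\le r\le n-1$, so that $a=\lfloor N/n\rfloor$ and $a+1=\lceil N/n\rceil$, one has $x_1^N=x_1^ru^a$, $x_2^N=x_2^rv^a$, $x_3^N=(-1)^ax_3^r(u+v)^a$. By part~(2) the issue is whether $\operatorname{Syz}_1^{R_n}(x_1^N,x_2^N,x_3^N)$ is free; expanding such a syzygy in the $S$-basis, each product $g_ix_i^N$ contributes a factor $u^a$ (resp.\ $v^a$, $(u+v)^a$) on the basis monomials where multiplication by $x_i^r$ does not wrap around, and a factor $u^{a+1}$ (resp.\ $v^{a+1}$, $(u+v)^{a+1}$) on those where it does. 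Translating freeness of the syzygy module into the existence of a suitably low-degree relation and matching degrees, one finds that it holds iff there is a non-zero syzygy of degree $<\lfloor\frac{3a}{2}\rfloor$ on $x_1^a,x_2^a,(x_1+x_2)^a$ or one of degree $<\lfloor\frac{3(a+1)}{2}\rfloor$ on $x_1^{a+1},x_2^{a+1},(x_1+x_2)^{a+1}$; by part~(1) this says exactly that $\bar{\pmb k}[x_1,x_2,x_3]/(x_1^a,x_2^a,x_3^a)$ or $\bar{\pmb k}[x_1,x_2,x_3]/(x_1^{a+1},x_2^{a+1},x_3^{a+1})$ fails the WLP, i.e.\ that (1) holds for $a=\lfloor N/n\rfloor$ or $a=\lceil N/n\rceil$. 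I expect this final degree-matching --- tracking precisely which basis monomials force exponent $a$ versus $a+1$, and why a single low-degree relation at either exponent already suffices to free the syzygy module --- to be the main obstacle of the whole argument.
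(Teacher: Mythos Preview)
First, note that the paper does not itself prove Theorem~\ref{intro}: it is quoted from \cite{KRV}. The paragraph following the statement attributes Part~(1) to \cite{BK}, Part~(2) to a communication with Avramov, and Part~(3) to \cite{KRV} via ``the numerical values given in \cite{BK}''. The only piece the present paper re-derives is Part~(1), as the case $n=3$ of Corollary~\ref{degree}.

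Your Part~(1) is correct and is essentially the same mechanism as Corollary~\ref{degree}: translate the WLP for $B$ into a condition on the cokernel $C=B/LB$, then read off the syzygy degrees from the Hilbert--Burch resolution of $C$ over $\pmb k[x_1,x_2]$. The paper's route (via Theorem~\ref{Translation}) is phrased for general $n$, but for $n=3$ the two arguments coincide. One small caveat: your Hilbert--Burch step tacitly assumes $x_1^a,x_2^a,(x_1+x_2)^a$ minimally generate their ideal, which fails for instance when $a$ is a power of $\operatorname{char}\pmb k$; the degenerate case is easy to handle directly, but you should flag it.

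Your Parts~(2) and~(3), by contrast, are sketches with genuine gaps, and they do not follow the route of \cite{KRV}. In Part~(2), ``type~$2$ leaves only the class $\mathbf H(3,2)$'' is asserted but not argued: Avramov's codepth-three classification contains several classes, and isolating $\mathbf H(3,2)$ requires actually computing the multiplicative structure on $\operatorname{Tor}^P_\bullet(\overline{R}_{n,N},\pmb k)$, which you have not done; likewise your (a)$\Rightarrow$(b) glosses over why the spliced $P$-resolution remains minimal, and your claim that the defining ideal always has exactly four minimal generators fails when $N<n$. In Part~(3) you yourself flag the ``degree-matching'' as the main obstacle and do not carry it out --- but that step \emph{is} the content of the argument. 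The approach actually taken in \cite{KRV}, as the paper describes, is different and less structural: one determines numerically, as a function of $n,N,p$, exactly when (2a) holds, and then matches this against the Brenner--Kaid list of values $a$ for which $\bar{\pmb k}[x_1,x_2,x_3]/(x_1^a,x_2^a,x_3^a)$ fails the WLP.
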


The paper \cite{KRV}  is about the resolution of $\overline{R}$ by free $R$-modules. In particular, item (2a) is one of the main concerns in \cite{KRV}. Early in the investigation that lead to \cite{KRV}, we found a relationship between (2a) and (1b). Eventually, we found the equivalence of (1a) and (1b)  in \cite{BK} and we used the numerical values given in \cite{BK} to prove (3). Lucho Avramov drew our attention to the equivalence of (2a), (2b), and (2c) in a recent conversation. %
%
%
When we wrote \cite{KRV} we were surprised by conclusion (3); that is, we were surprised that the homological questions considered in \cite{KRV} were related to the WLP. Furthermore,  we noticed that Li and Zanello \cite{LZ} had found ``a surprising, and still combinatorially obscure, connection'' between the monomial complete intersection ideals in three variables which satisfy the WLP, as a function of the characteristic of the base field, and the enumeration of plane partitions. In the mean time, the  connection between the WLP and the enumeration of plane partitions has started to become less obscure and has started to be exploited; see \cite{CN,CHJL}. At any rate, we now make sense of, generalize, and exploit the equivalence of (1a) and (1b). 

For a complete, up-to-date, history of the WLP see \cite{MN}. In particular, the present paper focuses on the WLP for monomial complete intersections. Much is known about the WLP for rings which are not defined by monomial ideals and for rings which are not complete intersections; see \cite{MN}. Furthermore,
J. Watanabe \cite[pg. 3165, Rmk. (3)]{W98} knew some version of the equivalence of (1a) and (1b) from Theorem \ref{intro} in 1998 and this idea also is used in \cite{HMNW}.

Let $A=\pmb k[x_1,\dots,x_n]/{(x_1^d,\dots,x_n^d)}$, where $\pmb k$ is an infinite field.
If $\pmb k$ has characteristic zero, then Stanley \cite{St80} (see also \cite{W,RRR}) proved that $A$ has the  Weak Lefschetz Property (WLP). {\bf The story is much different in positive characteristic!}
Henceforth, $\pmb k$ has positive characteristic $p$.
If $n=3$, then Brenner and Kaid \cite[Cor.~2.2 and Thm.~2.6]{BK} have identified all $d$, as a function of $p$,  for which $A$ has the WLP. (Our version of the Brenner-Kaid answer may be found as \cite[Thm. 5.11]{KRV}.) In the present paper, the analogous project is carried out for $4\le n$. 
If $4\le n$ and $p=2$, then $A$ has the WLP if and only if $d=1$; see Remark \ref{char2}.  If $n=4$ and $p$ is odd, then we prove in Theorem \ref{4} that  $A$ has the WLP if and only if 
$d=kq+r$ for integers $k,q,d$ with $1\le k\le \frac{p-1}2$, $r\in\left\{\frac{q-1}2,\frac{q+1}2\right\}$, and $q=p^e$ for some non-negative integer $e$. If $5\le n$, then we prove in Theorem \ref{last} that $A$ has the WLP if and only if $\left\lfloor\frac{n(d-1)+3}2\right\rfloor\le p$.

Basically there are five ingredients to our proof.

\smallskip\noindent {\bf (1)} We use ideas that we learned from \cite{MMN}   to interpret the WLP for the ring ${{\pmb k}[x_1, \ldots, x_{n}]}/{(x_1^d, \ldots, x_{n}^d)}$ in terms of the degrees of the non-Koszul relations on the elements $x_1^d, \ldots, x_{n-1}^d, (x_1+ \ldots +x_{n-1})^d$ in the polynomial ring $\pmb k[x_1, \ldots, x_{n-1}]$. 
In particular, we recover the result  that conditions (1a) and (1b) from Theorem~\ref{intro} are equivalent.  (As previously noted, we learned about this equivalence from \cite{BK}; but it was also known by \cite{W98} and \cite{HMNW}.) This step is carried out in Section 2. 

\smallskip\noindent {\bf (2)} We obtain sufficient conditions that guarantee that  the ring ${{\pmb k}[x_1, \ldots, x_{n}]}/{(x_1^d, \ldots, x_{n}^d)}$  has the WLP. These conditions are based on estimates of the minimal generating degree of
 $$\frac{(x_1^d,\dots,x_{n-1}^d)\!:\!(x_1+\dots+x_{n-1})^\gamma}{(x_1^d,\dots,x_{n-1}^d)}$$  \noindent for various choices of $\gamma$ (not only $\gamma=d$ as ingredient (1) might suggest). This minimal generator degree is known explicitly by Reid, Roberts, and Roitman \cite{RRR} (and implicitly by Stanley \cite{St80}) if the characteristic of $\pmb k$ is zero; our calculations take place when the field has positive characteristic.    This step is carried out in Section 3. 

\smallskip\noindent {\bf (3)}  
Theorems~\ref{n=4} and \ref{5<=n}  exhibit  sufficient conditions for $A=\pmb k[x_1,\dots,x_n]/{(x_1^d,\dots,x_n^d)}$ to have the WLP when $n=4$ and $5\le n$, respectively. These conditions are expressed in terms of the non-vanishing in $\pmb k$ of determinants  of various matrices  ``$M_{d,c,c,c}$'' of binomial coefficients. These determinants have been calculated classically; see \cite{R}.

\smallskip\noindent {\bf (4)}  We use Frobenius techniques to find relations of low degree on $$x_1^d,\dots, x_{n-1}^d, {(x_1+\dots +x_{n-1})^d}.$$ This calculation produces our necessary conditions on $d,p,n$ for $A$ to have the WLP. Section 4 culminates in Theorem~\ref{3} with a necessary condition for $A$ to have the WLP, when $n=4$. The corresponding result for $5\le n$ is Theorem \ref{6.3}. 

\smallskip\noindent {\bf (5)} We prove that the necessary condition of (4) is indeed sufficient by showing   the relevant determinants ``$\det M_{d,c,c,c}$''     are non-zero in $\pmb k$. 
Lemma~\ref{.Delta} treats the case $n=4$; the case $5\le n$ is contained in the proof of Theorem \ref{5<=n}.
The verification that the relevant $\det M_{d,c,c,c}$ are non-zero is much easier when $5\le n$ than when $n=4$; because when $5\le n$, then every integer that appears in the classical expression for the factorization of $\det M_{d,c,c,c}$ (see Proposition \ref{32-29-p69}) is less than $p$; whereas, when $n=4$, one must actually count the number of $p$'s that appear in the factorization for $\det M_{d,c,c,c}$.

\bigskip

We  use the convention that if $S$ is a statement, then  
\begin{equation}\label{chi}\chi(S)=\begin{cases} 1&\text{if $S$ is true}\\0&\text{if $S$ is false}.\end{cases}\end{equation}For example, if $n$ is an integer, then $\lfloor\frac{n}2\rfloor=\frac{n-\chi(\text{$n$ is odd})}2$.

If $m$ is a homogeneous element of a graded module $M=\bigoplus_{i\in \mathbb Z} M_i$, then we write $\deg m$ for the degree of  $M$. We use $\pmb s^n(\underline{\phantom{X}})$ to indicate that the degree of an element has been shifted by $n$. In other words, if $m$ is an element of the graded module $M$, and $n$ is an integer, then $\pmb s^{n}(m)$ is the element of $M(-n)$ which corresponds to $m$. In particular, $$\deg \pmb s^n(m)=\deg m+n.$$So, \begin{equation}\label{0.5}m\in M_{\deg m}\implies \pmb s^n(m)\in M(-n)_{\deg m+n}.\end{equation}

\begin{Definition} If $M=\bigoplus_{i\in \mathbb Z} M_i$ is a  graded module, then $i_0\le \operatorname{mgd}\, M$ means $M_i=0$ for all $i<i_0$ and $i_0= \operatorname{mgd}\, M$ means $M_i=0$ for all $i<i_0$ with $M_{i_0}\neq 0$. In particular, if $M$ is the zero module, then $\infty=\operatorname{mgd}\, M$ and if $M$ is a finitely generated non-zero graded module, then $\operatorname{mgd}\, M$ is an integer. The abbreviation $\operatorname{mgd}\,$ stands for  {\it minimal generator degree}.  \end{Definition}

\begin{Definition}\label{xi}
Fix the data $(\pmb k,n,\pmb a)$, where  $\pmb k$ is a field,  $n$ is a positive integer, and $\pmb a=(a_1,\dots,a_n)$  is an ordered $n$-tuple of non-negative integers. Define   $\xi{(\pmb k,n,\pmb a)}$  to be the
 homogeneous $\pmb k[x_1,\dots,x_{n-1}]$-module map $$\bigoplus_{i=1}^n \pmb k[x_1,\dots,x_{n-1}](-a_i)\to \pmb k[x_1,\dots,x_{n-1}],$$ which is given by the matrix
$$[x_1^{a_1},\dots,   x_{n-1}^{a_{n-1}},(x_1+\dots+x_{n-1})^{a_n}],$$$\mathrm {Syz}(\pmb k,n,\pmb a)$  the kernel of $\xi{(\pmb k,n,\pmb a)}$, $\mathrm {Kos}(\pmb k,n,\pmb a)$   the $\pmb k[x_1,\dots,x_{n-1}]$-submodule of 
$\mathrm {Syz}(\pmb k,n,\pmb a)$ which is generated by the Koszul relations on 
$\{x_1^{a_1},\dots,   x_{n-1}^{a_{n-1}},(x_1+\dots+x_{n-1})^{a_n}\}$,  $$\overline{\mathrm {Syz}}(\pmb k,n,\pmb a)=\frac{\mathrm {Syz}(\pmb k,n,\pmb a)}{\mathrm {Kos}(\pmb k,n,\pmb a)}, $$  $A(\pmb k,n,\pmb a)$  the quotient $$A(\pmb k,n,\pmb a)=\frac{\pmb k[x_1,\dots,x_n]}{(x_1^{a_1},\dots,x_n^{a_n})},$$
$K(\pmb k,n,\pmb a)$  the kernel  of the homogeneous $\pmb k[x_1,\dots,x_n]$-module homomorphism  $$\xymatrix{A(\pmb k,n,\pmb a)(-1)\ar[rr]^{\phantom{xx}L(\pmb k,n,\pmb a)}&& A(\pmb k,n,\pmb a)},$$where $L(\pmb k,n,\pmb a)$ is the  linear form $x_1+...+x_n$ of $\pmb k[x_1,\dots,x_n]$,
and $J(\pmb k,n,\pmb a,\gamma)$ to be the ideal 
 $$J(\pmb k,n,\pmb a,\gamma)=\frac{(x_1^{a_1},\dots,x_n^{a_n})\!:\!(x_1+\dots +x_n)^\gamma}{(x_1^{a_1},\dots,x_n^{a_n})}$$of $A(\pmb k,n,\pmb a)$. 
\end{Definition}

\begin{Remark}
We use the notation $a_1\!:\!r$ to mean that the integer $a_1$ appears $r$ times. So, in particular, if $d$ is a non-negative integer, then  the map
$\xi(\pmb k,n,d\!:\!n)$ is represented by the matrix $$[x_1^d,x_2^d,\dots,x_{n-1}^d,(x_1+\dots+x_{n-1})^d],$$ $A(\pmb k,n,d\!:\!n)$ is the quotient  $$A(\pmb k,n,d\!:\!n)=\frac{\pmb k[x_1,\dots,x_n]}{(x_1^{d},\dots,x_n^{d})},$$
and if $k$ and $\ell$ are non-negative integers with $\ell\le n$, then the map $\xi(\pmb k,n,(k+1)\!:\!\ell,k\!:\!(n-\ell))$  is represented by the matrix 
$$[x_1^{k+1} ,\dots, x_\ell^{k+1},x_{\ell+1}^k,\dots, x_{n-1}^{k},(x_1+\dots+x_{n-1})^k].$$ 
\end{Remark}

\begin{Remark} For data $(\pmb k,n,\pmb a)$ as described in Definition \ref{xi}, $\mathrm {Kos}(\pmb k,n,\pmb a)$ is the submodule of $\mathrm {Syz}(\pmb k,n,\pmb a)$ which is generated by all relations of the form:
$$\left[\begin{matrix} 0,\cdots ,0,g_j,0,\cdots ,0,-g_i,0,\cdots ,0
\end{matrix}\right]^{\rm t}$$ where $g_j$ appears in row $i$, $-g_i$ appears in row $j$ and $[g_1,\dots,g_n]=[x_1^{a_1},\dots x_{n-1}^{a_{n-1}},(x_1+\dots+x_{n-1})^{a_n}]$. 
\end{Remark}

\begin{Data}\label{xi'} Fix the data $(\pmb k,n,\pmb a)$,  where  $\pmb k$ is a field,  $n$ is a positive integer, and $\pmb a=(a_1,\dots,a_n)$  is an ordered $n$-tuple of positive integers. \end{Data}


\begin{notation}\label{E}If $n$ is a positive integer, $\pmb a=(a_1,\dots,a_n)$ is an $n$-tuple of integers and $\gamma$ is an integer, then let $|\pmb a|$, $E(n,\pmb a)$,  and $\mathrm{MN}(n,\pmb a,\gamma)$ represent the integers $$|\pmb a|=\sum_{i=1}^na_i,\qquad E(n,\pmb a)=\left\lfloor \frac{|\pmb a|-n+3}2\right\rfloor,\qquad\text{and}\qquad \mathrm{MN}(n,\pmb a,\gamma)= 1+\left\lfloor \frac{|\pmb a|-n -\gamma}2\right\rfloor.$$
\end{notation} 

Many of our results are stated in terms of the relationship between the integers $E(n,\pmb a)$ and $\operatorname{mgd}\, \overline{\mathrm {Syz}}(\pmb k, n, \pmb a)$ or between the integers $\mathrm{MN}(n,\pmb a,\gamma)$ and $\operatorname{mgd}\,J(\pmb k,n,\pmb a,\gamma)$.
The   connection between these relationships and the WLP for $A(\pmb k, n, \pmb a)$ is explained in Corollary \ref{degree}. 

\begin{Remark}If $A$ is a graded Artinian Gorenstein ring, then we write $\operatorname{socdeg}(A)$ for the socle degree of $A$. If $\sigma=\operatorname{socdeg}(A)$, then $A_\sigma\neq 0$, but $A_i=0$ for all $i$ with $\sigma <i$. In particular, in the language of Data \ref{xi'}, \begin{equation}\label{socdeg}\operatorname{socdeg} (A(\pmb k,n,\pmb a))=|\pmb a|-n.\end{equation} Indeed, the monomial $x_1^{a_1-1}\cdots x_n^{a_n-1}$ of $\pmb k[x_1,\dots, x_n]$ represents a basis element of the socle of $A(\pmb k,n,\pmb a)$, which is a one-dimensional vector space.\end{Remark}

Finally, we observe that our techniques also apply to the ring $\pmb k[x_1,\dots,x_n]/(x_1^{a_1},\dots,x_n^{a_n})$, even when the $a_i$'s do not all take the same value. Indeed, when $a_1\le a_2\le a_3\le a_1+a_2$ and $\pmb k$ is an infinite field, then the question ``Does $\pmb k[x_1,x_2,x_3]/(x_1^{a_1},x_2^{a_2},x_3^{a_3})$ have the WLP?'' is equivalent to 
\begin{Question}\label {han} Is the syzygy module for
$$\xymatrix{\pmb k[x_1,x_2](-a_1)\oplus \pmb k[x_1,x_2](-a_2)\oplus \pmb k[x_1,x_2](-a_3)\ar[rrr]^{\phantom{xxxxxxxxxxxxxxxxxxxxxxxx}[x_1^{a_1}\,,\,x_2^{a_2}\,,\,(x_1+x_2)^{a_3}]}&&&
\pmb k[x_1,x_2]}$$ isomorphic to $$\pmb k[x_1,x_2](-b_1)\oplus \pmb k[x_1,x_2](-b_2),$$ with $b_1=\lfloor\frac{a_1+a_2+a_3}2\rfloor$ and $b_2=\lceil\frac{a_1+a_2+a_3}2\rceil${\rm?}\end{Question}
\noindent Question \ref{han} is completely answered in Han's thesis \cite{han} for all data $(a_1,a_2,a_3,p)$, where $p$ is the characteristic of $\pmb k$. Our techniques  reproduce Han's answer to Question \ref{han}.

\section{The WLP and degrees of relations}

Retain the notation of Data {\rm \ref{xi'}}. In Corollary \ref{degree} we  translate the weak Lefschetz property for $A(\pmb k,n,\pmb a)$ into a condition on the minimal generator degree of $\overline{\mathrm {Syz}}(\pmb k,n,\pmb a)$. 
 In particular, we recover  the equivalence of (1a) and (1b) from Theorem~\ref{intro} when $n=3$. The modules $K(\pmb k,n,\pmb a)$ and $\overline{\mathrm {Syz}}(\pmb k,n,\pmb a)$ may be found in Definition \ref{xi}.

\begin{Theorem}\label{Translation} Fix   $(\pmb k,n,\pmb a)$ as in Data {\rm \ref{xi'}}. Then the  graded $\pmb k[x_1,\dots,x_n]$-modules $K(\pmb k,n,\pmb a)$ and $\overline{\mathrm {Syz}}(\pmb k,n,\pmb a)$ are isomorphic.\end{Theorem}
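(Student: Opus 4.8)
The plan is to realize both $K(\pmb k,n,\pmb a)$ and $\overline{\mathrm{Syz}}(\pmb k,n,\pmb a)$ as the single graded module $\operatorname{Tor}_1^{B}(A,C)$, where $B=\pmb k[x_1,\dots,x_n]$, $A=A(\pmb k,n,\pmb a)=B/(x_1^{a_1},\dots,x_n^{a_n})$, and $C=\pmb k[x_1,\dots,x_{n-1}]$ is viewed as the graded $B$-algebra $B/(L)$ by letting $x_i\mapsto x_i$ for $i<n$ and $x_n\mapsto-(x_1+\dots+x_{n-1})$, with $L=x_1+\dots+x_n$. (This is also the $\pmb k[x_1,\dots,x_n]$-module structure meant on $\overline{\mathrm{Syz}}(\pmb k,n,\pmb a)$ in the statement; the identification will confirm it is the right one, since $K(\pmb k,n,\pmb a)$, being annihilated by $L$, does have $x_n$ acting as $-(x_1+\dots+x_{n-1})$.)

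First I would compute this $\operatorname{Tor}$ by resolving the second factor. The complex $0\to B(-1)\xrightarrow{\,L\,}B\to C\to0$ is a graded free resolution of $C$ over $B$ because $L$ is a nonzerodivisor in $B$. Applying $A\otimes_B-$ gives $0\to A(-1)\xrightarrow{\,L\,}A$, whose first homology is, by the very definition in Definition \ref{xi}, $K(\pmb k,n,\pmb a)$; hence $\operatorname{Tor}_1^{B}(A,C)\cong K(\pmb k,n,\pmb a)$ as graded $B$-modules, the shift ``$(-1)$'' being exactly the one supplied by the resolution of $C$.

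Next I would compute the same $\operatorname{Tor}$ by resolving the first factor. Since $x_1^{a_1},\dots,x_n^{a_n}$ is a $B$-regular sequence, the Koszul complex $\mathbb{K}_\bullet=\mathbb{K}_\bullet(x_1^{a_1},\dots,x_n^{a_n};B)$, with its built-in shifts so that $\mathbb{K}_1=\bigoplus_{i=1}^nB(-a_i)$, is a graded free resolution of $A$. The key observation is that base change along $B\to C$ turns this into the Koszul complex over $C$ on the images of $x_1^{a_1},\dots,x_n^{a_n}$, i.e. on $x_1^{a_1},\dots,x_{n-1}^{a_{n-1}},(-1)^{a_n}(x_1+\dots+x_{n-1})^{a_n}$; rescaling the last basis vector of $\mathbb{K}_1$ by $(-1)^{a_n}$ is an isomorphism of complexes replacing that last entry by $(x_1+\dots+x_{n-1})^{a_n}$. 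Thus $\operatorname{Tor}_1^{B}(A,C)\cong H_1\bigl(\mathbb{K}_\bullet(x_1^{a_1},\dots,x_{n-1}^{a_{n-1}},(x_1+\dots+x_{n-1})^{a_n};C)\bigr)$. In that Koszul complex the differential $\mathbb{K}_1\to\mathbb{K}_0$ is precisely the map $\xi(\pmb k,n,\pmb a)$, so its kernel is $\mathrm{Syz}(\pmb k,n,\pmb a)$, while the image of $\mathbb{K}_2\to\mathbb{K}_1$ is precisely the submodule of Koszul relations $\mathrm{Kos}(\pmb k,n,\pmb a)$; therefore $H_1=\mathrm{Syz}/\mathrm{Kos}=\overline{\mathrm{Syz}}(\pmb k,n,\pmb a)$. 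Composing the two identifications yields the graded isomorphism $K(\pmb k,n,\pmb a)\cong\overline{\mathrm{Syz}}(\pmb k,n,\pmb a)$. It is $B$-linear because $\operatorname{Tor}_1^{B}(A,C)$ carries an intrinsic $B$-module structure, independent of which factor is resolved, which agrees with the given one on $K$ (an $A$-submodule of $A(-1)$ annihilated by $L$) and on $\overline{\mathrm{Syz}}$ (Koszul homology is annihilated by $(L)$ and by the ideal generated by the $x_i^{a_i}$, since each of these elements is null-homotopic on the Koszul complex, so $\overline{\mathrm{Syz}}$ is a module over $A/(L)$ via $B\to C$).

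The only delicate point is the middle step: verifying that $-\otimes_B C$ converts the Koszul resolution of $A$ into the Koszul complex over $C$ on the images of the $x_i^{a_i}$, and tracking the grading shifts so that the ``$(-1)$'' in the definition of $K(\pmb k,n,\pmb a)$ and the shifts $C(-a_i)$ in $\xi(\pmb k,n,\pmb a)$ line up exactly; the rest is formal homological algebra. As an alternative I would give the isomorphism explicitly: fix $h\in B$ with $(x_1+\dots+x_{n-1})^{a_n}=(-1)^{a_n}x_n^{a_n}+Lh$, and send a syzygy $(f_1,\dots,f_n)$ (with the $f_i$ taken in $\pmb k[x_1,\dots,x_{n-1}]$) to the class in $A(-1)$ of $G-f_nh$, where $G$ is the unique element of $B$ with $LG=\sum_{i<n}f_ix_i^{a_i}+f_n(x_1+\dots+x_{n-1})^{a_n}$; one then checks directly that this is $B$-linear, lands in $K(\pmb k,n,\pmb a)$, kills $\mathrm{Kos}(\pmb k,n,\pmb a)$ (the two kinds of Koszul relations give $G=0$ and output $0$ or $x_i^{a_i}h$, both zero in $A$), and is bijective by reversing the construction. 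I would still present the $\operatorname{Tor}$ argument as the main proof, since it makes the module structures and gradings transparent.
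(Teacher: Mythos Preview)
Your proof is correct and takes a genuinely different route from the paper's. The paper constructs explicit mutually inverse homomorphisms $\alpha:K\to\overline{\mathrm{Syz}}$ and $\beta:\overline{\mathrm{Syz}}\to K$: given $B\in P$ with $LB\in(x_1^{a_1},\dots,x_n^{a_n})$, one writes $LB=\sum B_ix_i^{a_i}$, applies the surjection $\varphi:P\to Q$ with $\varphi(x_n)=-(x_1+\dots+x_{n-1})$, and sends the class of $B$ to the class of $[\varphi(B_1),\dots,\varphi(B_{n-1}),(-1)^{a_n}\varphi(B_n)]^{\rm t}$; the inverse $\beta$ is essentially your ``alternative'' map (your $G-f_nh$ is exactly the paper's $B/L$). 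The paper then checks by hand that these are well defined, $P$-linear, and mutually inverse. Your approach replaces all of this bookkeeping by the single observation that both modules compute $\operatorname{Tor}_1^{B}(A,B/(L))$, using the obvious length-one resolution of $B/(L)$ on one side and the Koszul resolution of $A$ on the other; the sign $(-1)^{a_n}$ and the gradings fall out automatically from the base change of the Koszul complex. Your argument is cleaner and makes the $B$-linearity and the degree shifts transparent; the paper's explicit description has the modest advantage that one can read off the isomorphism on elements directly, which is occasionally convenient when one wants to transport a specific syzygy (as the paper does later in Section~4).
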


\begin{proof}We abbreviate $A(\pmb k,n,\pmb a)$, $K(\pmb k,n,\pmb a)$, $\mathrm {Syz}(\pmb k,n,\pmb a)$, and $\overline{\mathrm {Syz}}(\pmb k,n,\pmb a)$ as   $A$, $K$, $\mathrm {Syz}$, and $\overline{\mathrm {Syz}}$, respectively.  Let $P$ and $Q$ be the polynomial rings $P=\pmb k[x_1,\dots,x_n]$ and $Q=\pmb k[x_1,\dots,x_{n-1}]$, and let 
$L$ be the  linear form $x_1+...+x_n$ of $P$. View $Q$ as a subalgebra of $P$ and also view $Q$ as the quotient of $P$ under the $Q$-algebra surjection $\varphi:P\to Q$ with $\varphi(x_n)=-(x_1+...+x_{n-1})$. Notice that the ideal $(L)$ of $P$ is the kernel of $\varphi$. 
The ring $Q$ is the homomorphic image of the ring $P$ under $\varphi$; so, every $Q$-module is also a $P$-module.
In particular, $\overline{\mathrm {Syz}}$ is a graded $P$-module. 

We first define a homogeneous $P$-module homomorphism  $\alpha:K\to \overline{\mathrm {Syz}}$.
Let $B$ be a homogeneous element of $(x_1^{a_1},\dots,x_n^{a_n}):_PL$. It follows that there exist homogeneous polynomials  $B_1,\dots, B_n$ in $P$ with  $BL=\sum_{i=1}^nB_ix_i^{a_i}$ and $\deg B_ix_i^{a_i}=\deg BL$ for all $i$. Let $\pmb b$ be the element $$\pmb b=[\varphi(B_1),\dots,\varphi(B_{n-1}),(-1)^{a_n}\varphi(B_{n})]^{\rm t}\text{ of }\bigoplus_{i=1}^nQ(-a_i).$$ It is clear that $\pmb b$ is in $\mathrm {Syz}$ because when  $\varphi$ is applied to 
$BL=\sum_{i=1}^nB_ix_i^{a_i}$, one obtains 
$$0=\sum_{i=1}^{n-1}\varphi(B_i)x_i^{a_i}+\varphi(B_n)(-(x_1+\dots+x_{n-1}))^{a_n}=\left[x_1^{a_1},\ ...,\ x_{n-1}^{a_{n-1}},\ (x_1+...+x_{n-1})^{a_{n}}\right]\pmb b.$$
Ultimately, $\alpha$ will send \begin{equation}\label{alpha}\text{the class of   $\pmb s(B)$ in $K$ to the class of $\pmb b$ in $\overline{\mathrm {Syz}}$.}\end{equation} (The shift operator $\pmb s$ is described in (\ref{0.5}).)
We need to show that this proposed map is independent of the various choices which have been made.
Notice that if $\sum_{i=1}^nB_ix_i^{a_i}= \sum_{i=1}^nB_i'x_i^{a_i}$ for some homogeneous forms $\{B_i'\}$  in $P$, with $\deg B_i$ equal to $\deg B_i'$, then $[B_1,\dots,B_n]^{\rm t}-[B_1',\dots,B_n']^{\rm t}$ is in the submodule of $\bigoplus_{i=1}^nP(-a_i)$ which is generated by the Koszul relations on $x_1^{a_1},\dots,x_n^{a_n}$, and  these Koszul relations are carried to zero in $\overline{\mathrm {Syz}}$. Observe also that  if $B$ is in the ideal $(x_1^{a_1},\dots,x_n^{a_n})$ of $P$, then the proposed map sends $B$ to  zero. We have shown that $\alpha:K\to \overline{\mathrm {Syz}}$, as described in (\ref{alpha}), is a well-defined  homomorphism of graded $P$-modules.  

Now we define a $Q$-module homomorphism  $\beta:\overline{\mathrm {Syz}}\to K$. Let $\pmb b=[B_1,\dots,B_n]^{\rm t}$ be a homogeneous element of $\mathrm {Syz}$. It follows that \begin{equation}\label{div}B_1x_1^{a_1}+\dots+B_{n-1}x_{n-1}^{a_{n-1}}+B_n(x_1+...+x_{n-1})^{a_{n}}=0 \text{ in } Q.\end{equation}
Let $B$ be the polynomial $B=B_1x_1^{a_1}+\dots+B_{n-1}x_{n-1}^{a_{n-1}}+B_n(-x_n)^{a_{n}}$ in $P$. We see that $\varphi(B)$ is equal to the left hand side of (\ref{div}); therefore, $\varphi(B)=0$ and  $B$ is divisible by $L$ in $P$. It is clear that $L(\frac BL)=B$ is in the ideal $(x_1^{a_1},\dots,x_n^{a_n})$ of $P$ and therefore the image of $\pmb s(\frac BL)$ in $A(-1)$ is in $K$. 
Ultimately, $\beta$ will send \begin{equation}\label{beta}\text{the class of   $\pmb b$ in $\mathrm {Syz}$ to the class of 
$\textstyle \pmb s(\frac BL)$ in K.} \end{equation}We need to show that this proposed map is independent of the various choices which have been made. If $\pmb b$ had been in $\mathrm {Kos}$, then it is easy to see that $\frac BL$ is in $(x_1^{a_1},\dots,x_n^{a_n})P$ and hence $\pmb s(\frac BL)$ represents the zero element in $K$. Thus, $\beta:\overline{\mathrm {Syz}}\to K$, as described in (\ref{beta}), is a well-defined $Q$-module homomorphism. We notice that $\beta$ is also a homomorphism  of $P$-modules because every element of $K$ is annihilated by $L$; so $x_n\theta+(x_1+\dots+x_{n-1})\theta=0$ for all $\theta$ in $K$. 

We show that $\beta\circ \alpha$ is the identity map on $K$. Let $B$ be a homogeneous polynomial in $P$ with $LB=\sum_{i=1}^n B_i x_i^{a_i}$ for homogeneous polynomials $B_i$ in $P$ with $\deg LB=\deg B_ix_i^{a_i}$ for all $i$. We have seen that $\sum_{i=1}^n \varphi( B_i)x_i^{a_i}$ is also divisible by $L$ in $P$. Let $B'$ be the homogeneous polynomial in $P$ with $LB'=\sum_{i=1}^n \varphi(B_i)x_i^{a_i}$. We also have seen that $\beta\circ \alpha$ takes the class of $\pmb s B$ in $K$ to the class of $\pmb s B'$ in $K$. For each $i$, we notice that $B_i-\varphi(B_i)$ is in the kernel of $\varphi$; hence 
$B_i-\varphi(B_i)$ is divisible by $L$ in $P$. It follows that $L(B-B')$ is in the ideal $L(x_1^{a_1},\dots,x_n^{a_n})$ in the domain $P$ and therefore $B-B'\in (x_1^{a_1},\dots,x_n^{a_n})$ and $B$ and $B'$ represent the same element of $A$. 

Finally, we consider the composition $\alpha\circ \beta:\overline{\mathrm {Syz}}\to \overline{\mathrm {Syz}}$. If $\pmb b=[B_1,\dots,B_n]^{\rm t}$ is a homogeneous element of $\mathrm {Syz}$, then $\beta$ takes the class of $\pmb b$ in $\overline{\mathrm {Syz}}$ to the class of 
$$\pmb s((B_1x_1^{a_1}+\dots+B_{n-1}x_{n-1}^{a_{n-1}}+B_n(-x_n)^{a_{n}})/L)$$ in $K$ and $\alpha\circ \beta$ takes the class of $\pmb b$ in $\overline{\mathrm {Syz}}$  to the class of $[\varphi(B_1),\dots,\varphi(B_{n-1}),(-1)^{a_n}(-1)^{a_n}\varphi(B_n)]^{\rm t}$ in $\overline{\mathrm {Syz}}$;  $\varphi$ acts like the identity on $Q$ and each $B_i$ is in $Q$; so,  $\alpha\circ \beta$ is the identity map on $\overline{\mathrm {Syz}}$.
\end{proof}

Recall the integers $\mathrm{MN}(n,\pmb a,\gamma)$ and $E(n,\pmb a)$ from Notation \ref{E}. Corollary \ref{degree} is a list of equivalent conditions. Most of the equivalences are either due to \cite{MMN} or are due to bookkeeping. The new part of this result is the equivalence between  (4) or (5) and any of the other conditions. We use all of the conditions somewhere in the paper. It is convenient  to have them all in one place. 

\begin{Corollary}\label{degree}Fix   $(\pmb k,n,\pmb a)$ as in Data {\rm\ref{xi'}}. Let $A$ represent $A(\pmb k,n,\pmb a)$ and $L$ represent $x_1+\dots +x_n$. The following statements are equivalent.
\begin{itemize}
\item[{\rm (1)}]The map, multiplication by $L$, from $A_i$ to $A_{i+1}$, is injective for all $i\le \left\lfloor \frac{|\pmb a|-n-1}2\right\rfloor$.
\item[{\rm (2)}]The map, multiplication by $L$, from $A_i$ to $A_{i+1}$, is injective for all $i= \left\lfloor \frac{|\pmb a|-n-1}2\right\rfloor$.
\item[{\rm (3)}]$\mathrm{MN}(n,\pmb a,1)\le \operatorname{mgd}\, J(\pmb k,n,\pmb a,1)$
\item[{\rm (4)}]$E(n,\pmb a)\le \operatorname{mgd}\, \overline{\mathrm {Syz}}(\pmb k,n,\pmb a)$
\item [{\rm (5)}]$\mathrm{MN}(n-1,(a_1,\dots,a_{n-1}),a_n)\le \operatorname{mgd}\, J(\pmb k,n-1,(a_1,\dots,a_{n-1}),a_n)$
\end{itemize}
Furthermore, if the field $\pmb k$ is infinite, then the above statements are also equivalent to 
\begin{itemize}
\item[{\rm (6)}]The ring $A$ has the WLP. 
\end{itemize}
\end{Corollary}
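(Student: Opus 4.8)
The plan is to establish the chain of equivalences in stages, keeping track of the degree bookkeeping carefully. First I would record the numerical identity $\operatorname{socdeg}(A) = |\pmb a| - n$ from \eqref{socdeg}, so that $\left\lfloor\frac{|\pmb a|-n-1}{2}\right\rfloor$ is the largest index $i$ for which $\dim_{\pmb k} A_i \le \dim_{\pmb k} A_{i+1}$ (using that $A$ is a Gorenstein Artinian algebra, hence has symmetric Hilbert function). This immediately gives (1)$\Leftrightarrow$(2) once we know that injectivity in the single middle-ish degree forces injectivity in all lower degrees; that monotonicity is the standard argument (if $L$ is injective from $A_{i}$ to $A_{i+1}$ then, comparing dimensions via the symmetric and unimodal-in-the-relevant-range Hilbert function, it is injective from $A_{i-1}$ to $A_i$ — or one invokes the general principle from \cite{MMN} that the ``first failure of injectivity'' on a Gorenstein algebra occurs at the middle). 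I would also note here that (1) is precisely the statement that $K(\pmb k,n,\pmb a)_j = 0$ for all $j \le 1 + \left\lfloor\frac{|\pmb a|-n-1}{2}\right\rfloor$, i.e.\ that $\operatorname{mgd} K(\pmb k,n,\pmb a) \ge 2 + \left\lfloor\frac{|\pmb a|-n-1}{2}\right\rfloor$; a short computation shows $2 + \left\lfloor\frac{|\pmb a|-n-1}{2}\right\rfloor = \left\lfloor\frac{|\pmb a|-n+3}{2}\right\rfloor = E(n,\pmb a)$.

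Next, (1)$\Leftrightarrow$(4) follows by combining the previous paragraph's reformulation of (1) with Theorem~\ref{Translation}, which gives a graded isomorphism $K(\pmb k,n,\pmb a) \cong \overline{\mathrm{Syz}}(\pmb k,n,\pmb a)$; so $\operatorname{mgd} K = \operatorname{mgd}\overline{\mathrm{Syz}}$ and the inequality $E(n,\pmb a) \le \operatorname{mgd}\overline{\mathrm{Syz}}$ is literally condition (1) in disguise. For (1)$\Leftrightarrow$(3): the kernel of multiplication by $L$ on $A_i$ is, by definition, $\big((x_1^{a_1},\dots,x_n^{a_n}):L\big)/(x_1^{a_1},\dots,x_n^{a_n})$ in degree $i+1$, i.e.\ $J(\pmb k,n,\pmb a,1)_{i+1}$; saying this vanishes for all $i \le \left\lfloor\frac{|\pmb a|-n-1}{2}\right\rfloor$ is saying $J(\pmb k,n,\pmb a,1)_j = 0$ for $j \le 1+\left\lfloor\frac{|\pmb a|-n-1}{2}\right\rfloor$, and one checks $1 + \left\lfloor\frac{|\pmb a|-n-1}{2}\right\rfloor = \mathrm{MN}(n,\pmb a,1)$ directly from Notation~\ref{E}. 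The equivalence (3)$\Leftrightarrow$(5) is the substitution $x_n \mapsto -(x_1+\dots+x_{n-1})$ already exploited in the proof of Theorem~\ref{Translation}: modding out the colon ideal by $L$ and passing to $Q = \pmb k[x_1,\dots,x_{n-1}]$ identifies $J(\pmb k,n,\pmb a,1)$ with a shift of $J(\pmb k,n-1,(a_1,\dots,a_{n-1}),a_n)$ — one should check that $(x_1^{a_1},\dots,x_n^{a_n}):L$ modulo $(L)$ equals $(x_1^{a_1},\dots,x_{n-1}^{a_{n-1}},(x_1+\dots+x_{n-1})^{a_n}):(x_1+\dots+x_{n-1})$ in $Q$ with the expected degree shift, and then verify the arithmetic $\mathrm{MN}(n,\pmb a,1) = \mathrm{MN}(n-1,(a_1,\dots,a_{n-1}),a_n)$, which holds because $|\pmb a| - n - 1 = |(a_1,\dots,a_{n-1})| - (n-1) - a_n$.

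Finally, for the equivalence with (6) under the hypothesis that $\pmb k$ is infinite: since $A$ is a standard graded Artinian Gorenstein algebra with symmetric Hilbert function, multiplication by a general linear form $L$ has maximal rank in every degree if and only if it is injective in every degree $i \le \left\lfloor\frac{|\pmb a|-n-1}{2}\right\rfloor$ (injectivity below the middle plus the symmetric Hilbert function forces surjectivity above the middle, so WLP reduces to the single-degree injectivity condition (2)); and over an infinite field the set of Lefschetz elements is Zariski open, so ``there exists a Lefschetz element'' is equivalent to ``the specific element $x_1+\dots+x_n$ behaves like one'' \emph{provided} we know $x_1+\dots+x_n$ is generic enough — here one uses the symmetry of the monomial ideal $(x_1^{a_1},\dots,x_n^{a_n})$ under permuting and scaling variables (more precisely, that the locus of bad linear forms is a proper closed subset and is stable under the torus action, so if any linear form with all coordinates nonzero works then $x_1+\dots+x_n$ works). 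I expect the main obstacle to be this last point — cleanly justifying that testing the single distinguished linear form $x_1+\dots+x_n$ suffices — together with nailing down the several floor-function identities so that the index ranges in (1)--(5) match up exactly; the isomorphism of Theorem~\ref{Translation} does the real structural work, and everything else is careful bookkeeping.
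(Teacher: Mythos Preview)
Your overall architecture matches the paper's: use Theorem~\ref{Translation} for (1)$\Leftrightarrow$(4), unpack definitions for (1)$\Leftrightarrow$(3), and cite \cite{MMN} for (1)$\Leftrightarrow$(2) and for (6). Two points need correction.

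First, a minor off-by-one in (1)$\Leftrightarrow$(3): the kernel of $L\colon A_i\to A_{i+1}$ is $J(\pmb k,n,\pmb a,1)_i$, not $J_{i+1}$. With the correct indexing, (1) says $J_j=0$ for $j\le\lfloor\frac{|\pmb a|-n-1}{2}\rfloor$, i.e.\ $\operatorname{mgd} J\ge 1+\lfloor\frac{|\pmb a|-n-1}{2}\rfloor=\mathrm{MN}(n,\pmb a,1)$, which is exactly (3). Your version overshoots by one degree.

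Second, and more seriously, your route (3)$\Leftrightarrow$(5) is incorrect as written. The identity you propose to ``verify'', namely $\mathrm{MN}(n,\pmb a,1)=\mathrm{MN}(n-1,(a_1,\dots,a_{n-1}),a_n)$, is \emph{false}: the right-hand side equals $1+\lfloor\frac{|\pmb a|-2a_n-n+1}{2}\rfloor$, which differs from the left-hand side by $a_n-1$. Correspondingly, the colon ideal you write down in $Q$, namely $(x_1^{a_1},\dots,x_{n-1}^{a_{n-1}},(x_1+\dots+x_{n-1})^{a_n}):(x_1+\dots+x_{n-1})$, is not $J(\pmb k,n-1,(a_1,\dots,a_{n-1}),a_n)$; the latter is $(x_1^{a_1},\dots,x_{n-1}^{a_{n-1}}):(x_1+\dots+x_{n-1})^{a_n}$ modulo $(x_1^{a_1},\dots,x_{n-1}^{a_{n-1}})$. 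The two $J$'s are isomorphic only after a twist by $a_n-1$, so the $\mathrm{MN}$'s must differ by the same amount.

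The paper avoids this tangle by instead proving (4)$\Leftrightarrow$(5) directly: the map $[f_1,\dots,f_n]^{\mathrm t}\mapsto f_n$ sends $\mathrm{Syz}(\pmb k,n,\pmb a)$ onto $\big((x_1^{a_1},\dots,x_{n-1}^{a_{n-1}}):(x_1+\dots+x_{n-1})^{a_n}\big)(-a_n)$ and descends to a graded isomorphism $\overline{\mathrm{Syz}}(\pmb k,n,\pmb a)\cong J(\pmb k,n-1,(a_1,\dots,a_{n-1}),a_n)(-a_n)$. Then one checks the single arithmetic identity $E(n,\pmb a)=\mathrm{MN}(n-1,(a_1,\dots,a_{n-1}),a_n)+a_n$, which is immediate. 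I'd recommend replacing your (3)$\Leftrightarrow$(5) argument with this (4)$\Leftrightarrow$(5) projection; it is both simpler and correct.
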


\begin{Remark} \label{after-degree}Often, when one applies Corollary \ref{degree}, one knows   ahead of time that $$E(n,\pmb a) \le \operatorname{mgd}\, \mathrm {Kos} (\pmb k, n,\pmb a).$$ Neither of these numbers require any algebraic calculation; in particular, $\operatorname{mgd}\, \mathrm {Kos}(\pmb k, n,\pmb a)$ is the sum of the two smallest elements of $\pmb a$.   If $E(n,\pmb a) \le \operatorname{mgd}\, \mathrm {Kos} (\pmb k, n,\pmb a)$, then $\mathrm {Syz}_i=\overline{\mathrm {Syz}}_i$ for all $i<E(n,\pmb a)$; thus, 
$$E(n,\pmb a)\le \operatorname{mgd}\,\overline{\mathrm {Syz}}(\pmb k,n,\pmb a)\iff E(n,\pmb a)\le \operatorname{mgd}\,\mathrm {Syz}(\pmb k,n,\pmb a).$$\end{Remark}

\begin{proof} It is obvious that $(1) \Rightarrow (2)$. The implication $(2)\Rightarrow (1)$ may be found in \cite[Prop. 2.1]{MMN}. The equivalence of (1) and (3) follows from the definition of $\mathrm{MN}(n,\pmb a,1)$ and $J(\pmb k,n,\pmb a,1)$. Observe that
$$\begin{array}{rcl}\text{assertion (1) holds} &\iff&
\left[\operatorname{Ker} \left(A(-1)\stackrel{L}{\longrightarrow} A\right)\right]_i=0\ \forall i\  \le \left\lfloor \frac{|\pmb a|-n+1}2\right\rfloor\vspace{5pt}\\
&\iff& \left\lfloor\frac{|\pmb a|-n+3}2\right\rfloor \le  \operatorname{mgd}\, \left[\operatorname{Ker} \left(A(-1)\stackrel{L}{\longrightarrow} A\right)\right]\vspace{5pt}\\
&\iff& 
E(n,\pmb a)\le \operatorname{mgd}\, K(\pmb k,n,\pmb a)\vspace{5pt}\\
 &\iff& \text{assertion (4) holds}
,  
\end{array}$$ where the final equivalence is due to   Theorem \ref{Translation}. The map $$\left[\begin{matrix} f_1,\cdots,f_n\end{matrix}\right]^{\mathrm t} \mapsto f_n,$$from 
$$\mathrm {Syz}(\pmb k,n,\pmb a)=\ker[x_1^{a_1},\dots,x_{n-1}^{a_{n-1}},(x_1+\dots+x_{n-1})^{a_n}]\text{ to } 
\left((x_1^{a_1},\dots,x_{n-1}^{a_{n-1}}): (x_1+\dots+x_{n-1})^{a_n}\right)(-a_n)$$
induces a degree preserving  isomorphism
$${\textstyle \overline{\mathrm {Syz}}(\pmb k,n,\pmb a)=\frac{\mathrm {Syz}(\pmb k,n,\pmb a)}{\mathrm {Kos}(\pmb k,n,\pmb a)}\to \left(\frac{(x_1^{a_1},\dots,x_{n-1}^{a_{n-1}}): (x_1+\dots+x_{n-1})^{a_n}}{(x_1^{a_1},\dots,x_{n-1}^{a_{n-1}})}\right)(-a_n)
=J(\pmb k,n-1,(a_1,\dots,a_{n-1}),a_n)(-a_n)},$$and this isomorphism explains $(4) \iff (5)$. Now assume that $\pmb k$ is infinite. 
It is shown in Propositions 2.2 and  2.1 of \cite{MMN} (see also Remark~\ref{evensocle} of the present paper) that 
$A$ has the WLP if and only if $L$ is a Lefschetz element for $A$, and this is equivalent to the assertion  that the multiplication by $L$ map from $A_{\lfloor \frac{\sigma-1}{2}\rfloor}$ to $A_{\lfloor \frac{\sigma+1}{2} \rfloor} $ is injective, where
$\sigma=|\pmb a|-n$ is the socle degree of $A$, see (\ref{socdeg}). Thus, \cite{MMN} shows $(1)\iff(6)$. 
\end{proof}
Remark \ref{evensocle} is well-known; we include a proof of it for the sake of completeness. This remark allows us to appeal to the results of \cite{MMN} as they are written.
\begin{Remark}\label{evensocle}Let $A$ be a standard graded Artinian Gorenstein algebra with even socle degree $2s$. If $L$ is an element of $A_1$ and multiplication by $L$ gives an injective map $A_{s-1}\to A_{s}$, then multiplication by $L$ gives a surjection $A_{s}\to A_{s+1}$.\end{Remark}

\begin{proof} Fix a non-zero socle element $\omega$ of $A_{2s}$. Let $a$ be a non-zero element of $A_{s+1}$. Multiplication gives a perfect pairing $A_{s-1}\times A_{s+1}\to A_{2s}$; hence, there exists a basis $b_1,\dots,b_r$ for $A_{s-1}$ such that $ab_1=\omega$ and $ab_i=0$ for $i\ge 2$. The hypothesis ensures that $Lb_1,\dots,Lb_r$ are linearly independent elements of $A_{s}$. Multiplication $A_{s}\times A_{s}\to A_{2s}$ is a perfect pairing; so there exists an element $c$ in $A_{s}$ with $c(Lb_1)=\omega$ and $c(Lb_i)=0$ for $i\ge 2$. We see that $a-Lc$ is an element of $A_{s+1}$ with $(a-Lc)A_{s-1}=0$. The fact that $A_{s-1}\times A_{s+1}\to A_{2s}$ is a perfect pairing yields $a=Lc$. \end{proof}

\section{Conditions that guarantee that $A(k,n,d\!:n)$ has the WLP.}

\begin{Data}\label{data3}Fix the data $(\pmb k,n,\pmb a,\gamma)$, where $\pmb k$ is a field, $n$ is a positive integer, $\pmb a=(a_1,\dots,a_n)$ is an $n$-tuple of positive integers, and $\gamma$  is a non-negative integer. We say that ``inequality (\ref{Stan}) holds for the data $(\pmb k,n,\pmb a,\gamma)$'' if the inequality 
\begin{equation}\label{Stan}\mathrm{MN}(n,\pmb a,\gamma)  \le\operatorname{mgd}\, J(\pmb k,n,\pmb a,\gamma)\end{equation}
holds.

\end{Data} The connection   between the inequality (\ref{Stan})
 and the Lefschetz property is made quite clear in Corollary \ref{degree}. In particular,
if the field $\pmb k$ is infinite, then 
\begin{equation*}\begin{array}{rcl}\text{$A(\pmb k,n,\pmb a)$ has the WLP}&\iff& \text {inequality (\ref{Stan}) holds 
for the data $(\pmb k,n,\pmb a,1)$}
\\
&\iff&\text {inequality (\ref{Stan}) holds  for the data $(\pmb k, n-1,(a_1,\dots,a_{n-1}),a_n)$}.
\end{array}\end{equation*}Furthermore, the same style of argument shows that $L=x_1+\dots+x_n$ is a strong Lefschetz element for $A(\pmb k,n,\pmb a)$ if and only if inequality (\ref{Stan}) holds for the data $(\pmb k,n,\pmb a,\gamma)$, for all values of  $\gamma$. 
Stanley's original proof \cite{St80} that $L=x_1+\dots+x_n$ is a strong Lefschetz element for $A(\mathbb C,n,\pmb a)$ involved the hard Lefschetz theorem from Algebraic Geometry.  Later proofs of the statement $$\text{if the characteristic of $\pmb k$ is zero, then 
$A(\pmb k,n,\pmb a)$ has the WLP}$$ pass through the inequality (\ref{Stan}); see  \cite{W} and especially \cite[Theorem 5]{RRR}. In Lemma \ref{29.4} we prove inequality (\ref{Stan}) for the data $(\pmb k,n,d\!:\! n,\gamma)$, under the hypothesis that certain matrices $M_{t,b,s,s}$, (see Definition \ref{3.1}),  have non-zero determinant in $\pmb k$. 
Lemma \ref{29.4}, as stated, includes an inductive hypothesis, but ultimately, in the applications,   this hypothesis is replaced by the assumption that $\det M_{t,b,s,s}\neq 0$ in $\pmb k$ for certain choices of $t,b,s$. 
We apply Lemma \ref{29.4} in two situations: $n=4$ (see Theorem \ref{n=4} and also Theorem \ref{4}) and $5\le n$ with $d$ small when compared to the characteristic of $\pmb k$ (see Theorem \ref{5<=n} and also Theorem \ref{last}).

The matrices $M_{t,b,s,s}$ have become ubiquitous in the study of the WLP. We first met them in \cite{LZ} where we learned that Paul Roberts calculated their determinants in \cite{R}. Roberts gives a reference to \cite{M} from 1930. These matrices   are used to count plane partitions and other combinatorial objects; see the work of Cook and Nagel \cite{CN-1,CN}; they also are used in the calculation of Hilbert-Kunz multiplicities. 
  
\begin{Definition} \label{3.1}  Let $t,b,r,c$ be integers, with $r$ and $c$ positive. Define 
$M_{t,b,r,c}$ to be the following $r\times c$ matrix of integers
$$M_{t,b,r,c}  = \left[ \begin{array}{cccc} \binom t b  & \binom t { b-1 } & \ldots & \binom t {b-c +1} \\
\binom t {b+1}  & \binom t   b   & \ldots & \binom t {b-c+2} \\
\vdots  & \vdots &  & \vdots \\
\binom t {b+r-1} & \binom  t {b+r-2} & \ldots & \binom t  {b+r-c} \\
\end{array}\right].$$(The parameters ``$t$'' and ``$b$'' stand for the top and bottom components in the binomial coefficient in the upper left hand corner and ``$r$'' and ``$c$'' stand for the number of rows and the number of columns. When $r=c$, we are likely to use the parameter ``$s$'' for side length.)
Recall that  the 
 binomial
coefficient $\binom{m}{i}$ is defined to be
$$\binom{m}{i}=\left\{ \begin{tabular}{ll}
$\dfrac{m(m-1)\cdots(m-i+1)}{i!}$&\quad if $0<i$,\\\vspace{5pt} 
$1$&\quad if $0=i$, and\\\vspace{5pt} $0$&\quad if 
$i<0$,\end{tabular} \right.$$for all integers $i$ and $m$. In particular, $\binom mi=0$ whenever $0\le m<i$.
 \end{Definition}

\begin{Proposition}\label{32-29-p69} If $b,s,t$ are integers, with  $0\le b\le t$   and $1\le s$, then  $\det M_{t,b,s,s}$ is a non-zero integer. Furthermore, if $\pmb k$ is a field of characteristic $p$ and $t+s\le p$, then  $\det M_{t,b,s,s}$ is a non-zero element of $\pmb k$. \end{Proposition}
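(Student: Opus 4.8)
## Proof Proposal

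The plan is to recognize $\det M_{t,b,s,s}$ as an instance of a classically evaluated determinant. The matrix $M_{t,b,s,s}$ is a Toeplitz matrix whose $(i,j)$ entry is $\binom{t}{b+i-j}$, i.e.\ a determinant of the form $\det\bigl(\binom{t}{b+i-j}\bigr)_{1\le i,j\le s}$. Such determinants are computed by the classical formula (attributed in the excerpt to \cite{M} and re-derived by Roberts in \cite{R})
\begin{equation*}
\det\Bigl(\tbinom{t}{b+i-j}\Bigr)_{1\le i,j\le s} \;=\; \prod_{i=1}^{s}\prod_{j=1}^{b}\prod_{k=1}^{t-b}\frac{i+j+k-1}{i+j+k-2},
\end{equation*}
or an equivalent product of ratios of factorials. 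The first thing I would do is state this evaluation precisely (citing \cite{R,M}), taking care that the conventions for $\binom{m}{i}$ in Definition~\ref{3.1} — in particular $\binom{m}{i}=0$ for $i<0$ or $0\le m<i$ — match those under which the product formula is valid, so that the cases $b=0$ or $b=t$ (where the matrix may be triangular-ish) are handled uniformly.

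Given the product formula, the first assertion (that $\det M_{t,b,s,s}$ is a nonzero \emph{integer}) is immediate: every factor in the product is a ratio of positive integers, the whole product is known a priori to be an integer (it is a determinant of an integer matrix), and it is manifestly positive, hence nonzero. So the content of the first sentence is really just: quote the formula, observe positivity.

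For the second assertion I would rewrite the product formula so that the numerator and denominator are each a genuine product of \emph{integers} (clearing the nested ratios, e.g.\ writing $\det M_{t,b,s,s}$ as a quotient of products of factorials, or better, as a product $\prod \frac{(\text{something})!}{(\text{something})!}$ which telescopes into $\prod_m m^{e_m}$ for integer exponents $e_m$). The key point is that \emph{every integer appearing in this factorization is at most $t+s-1$}: inspecting the formula above, the integers that occur are of the shape $i+j+k-1$ with $1\le i\le s$, $1\le j\le b\le t$... more carefully, after simplification the relevant bound is that all prime factors that could occur come from integers $\le t+s-1 < p$. Hence, under the hypothesis $t+s\le p$, no factor in numerator or denominator is divisible by $p$, so $\det M_{t,b,s,s}$ is a unit times an integer coprime to $p$; reducing mod $p$ (and then passing to $\pmb k$) gives a nonzero element of $\pmb k$.

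The main obstacle is the bookkeeping in the last step: one must pin down the \emph{exact} range of integers that appear in a fully-cleared (factorial) form of the product and verify the bound $\le t+s-1\le p-1$ — this requires being careful about whether the bound is $t+s-1$, $t+s$, or $\max(t,s)+\dots$, since an off-by-one would break the argument exactly at the boundary case $t+s=p$. A clean way to do this is to use the factorial expression $\det M_{t,b,s,s}=\prod_{i=1}^{s}\frac{(t-b+i-1)!\,(b+i-1)!\,(i-1)!\,\dots}{\dots}$ (the precise ``Cauchy-type'' product), note that the largest argument of any factorial occurring is $t+s-1$, and conclude that $v_p$ of the numerator equals $v_p$ of the denominator when $p>t+s-1$, hence $v_p(\det M_{t,b,s,s})=0$. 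I would present the factorial form, cite \cite{R} for its validity, and do the valuation count explicitly since it is short once the formula is in hand.
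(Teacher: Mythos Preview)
Your proposal is correct and follows essentially the same route as the paper: quote the classical closed-form evaluation of $\det M_{t,b,s,s}$ (the paper cites Roberts' formula
$\det M_{t,b,s,s}=\dfrac{\binom{t}{b}\binom{t+1}{b}\cdots\binom{t+s-1}{b}}{\binom{b}{b}\binom{b+1}{b}\cdots\binom{b+s-1}{b}}$,
you cite the equivalent MacMahon triple-product form), read off positivity for the first assertion, and observe that every integer appearing in the product is at most $t+s-1<p$ for the second assertion. The paper's proof is in fact even terser than yours---it simply records the formula and stops, leaving both inferences to the reader---so your added commentary about the $p$-adic valuation is a fuller version of the same argument, not a different one.
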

 \begin{proof}The determinant of the matrix $M_{t,b,s,s}$ is calculated  in \cite[page 335]{R} to be 
$$
\frac{ \binom t  b   \binom {t +1} b   \cdots \binom{ t+s-1}   b }{\binom b  b   \binom {b+1} b \cdots \binom {b+s -1} b }.$$\end{proof}

We collect a few properties of $\det M_{t,b,s,s}$. 

\begin{Proposition}\label{Properties}Let $b$, $s$, and $t$ be integers, with  $0\le b\le t-1$   and $1\le s$. Then the following statements hold{\rm:}
\begin{itemize}
\item[{\rm (1)}] $\det M_{t,b,s,s}=\det M_{b+s,b,t-b,t-b}$, and 
\item[{\rm (2)}] the matrix $M_{t,b,s,s}$ is the transpose of $M_{t,t-b,s,s}$.
\end{itemize}
\end{Proposition}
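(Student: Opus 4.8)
The plan is to prove (2) by a direct entrywise comparison of the two matrices, and to deduce (1) from the closed product formula for $\det M_{t,b,s,s}$ recorded in Proposition \ref{32-29-p69}. No genuinely hard step is involved; the only points requiring care are the boundary cases in the symmetry $\binom{t}{m}=\binom{t}{t-m}$ of binomial coefficients and the bookkeeping of index ranges in (1).

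For (2), I would observe that the $(i,j)$ entry of $M_{t,b,s,s}$ is $\binom{t}{b+i-j}$, while the $(i,j)$ entry of the transpose of $M_{t,t-b,s,s}$ is the $(j,i)$ entry of $M_{t,t-b,s,s}$, namely $\binom{t}{(t-b)+j-i}$. I would then invoke $\binom{t}{m}=\binom{t}{t-m}$ with $m=(t-b)+j-i$, which gives $\binom{t}{(t-b)+j-i}=\binom{t}{b+i-j}$, so the two matrices agree entry by entry. The identity $\binom{t}{m}=\binom{t}{t-m}$ is standard when $0\le m\le t$; in the remaining cases ($m<0$ or $m>t$, both possible here since $t\ge 1\ge 0$) both sides vanish by the conventions of Definition \ref{3.1}, so it holds for all integers $m$.

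For (1), I would substitute into the formula of Proposition \ref{32-29-p69}, obtaining
$$\det M_{t,b,s,s}=\frac{\prod_{j=t}^{t+s-1}\binom{j}{b}}{\prod_{j=b}^{b+s-1}\binom{j}{b}}\qquad\text{and}\qquad \det M_{b+s,b,t-b,t-b}=\frac{\prod_{j=b+s}^{t+s-1}\binom{j}{b}}{\prod_{j=b}^{t-1}\binom{j}{b}};$$
the hypothesis $0\le b\le t-1$ guarantees $t-b\ge 1$, so $M_{b+s,b,t-b,t-b}$ is a well-defined square matrix and the formula applies to it (with $t,s$ there replaced by $b+s,t-b$). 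Every binomial coefficient occurring has upper index $j\ge b\ge 0$, hence is a positive integer, so I may cross-multiply; the claimed equality becomes
$$\Big(\prod_{j=b}^{t-1}\binom{j}{b}\Big)\Big(\prod_{j=t}^{t+s-1}\binom{j}{b}\Big)=\Big(\prod_{j=b}^{b+s-1}\binom{j}{b}\Big)\Big(\prod_{j=b+s}^{t+s-1}\binom{j}{b}\Big).$$
Both sides equal $\prod_{j=b}^{t+s-1}\binom{j}{b}$, because $b\le t-1$ and $s\ge 1$ ensure that the pairs of ranges $\{[b,t-1],[t,t+s-1]\}$ and $\{[b,b+s-1],[b+s,t+s-1]\}$ each partition $[b,t+s-1]$ into two consecutive blocks. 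This proves (1).

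I expect the main obstacle — such as it is — to be purely notational: keeping the product ranges and the role substitutions ($t\mapsto b+s$, $s\mapsto t-b$) straight when specializing Proposition \ref{32-29-p69}, and handling the degenerate boundary values in part (2). If one preferred an argument for (1) independent of the explicit formula, one could instead read $\det M_{t,b,s,s}$ through the Lindstr\"om--Gessel--Viennot lemma as a count of non-intersecting families of monotone lattice paths and match it, via complementation of such a family, with the corresponding count for $M_{b+s,b,t-b,t-b}$; but the formula-based argument above is shorter given what has already been established.
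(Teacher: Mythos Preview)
Your proof is correct and follows essentially the same approach as the paper: part (2) is the same entrywise comparison via $\binom{t}{m}=\binom{t}{t-m}$, and part (1) is the same reduction to the product formula of Proposition~\ref{32-29-p69}. The only cosmetic difference is that the paper transforms one ratio into the other by multiplying numerator and denominator by the missing block of binomials (splitting into the cases $b+s\le t$ and $t\le b+s$), whereas your cross-multiplication recognizes both sides at once as $\prod_{j=b}^{t+s-1}\binom{j}{b}$ and thereby avoids the case distinction.
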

\begin{proof} For (1), apply the proof of Proposition \ref{32-29-p69} to see that 
$$\begin{array}{rclcl}\det M_{t,b,s,s}&=&\frac{ \binom t  b   \binom {t +1} b   \cdots \binom{ t+s-1}   b }{\binom b  b   \binom {b+1} b \cdots \binom {b+s -1} b }
&=&\begin{cases} \frac{\phantom{\binom b  b   \binom {b+1} b \cdots \binom {b+s -1} b}\binom{b+s}b\cdots \binom{t-1}b \binom t  b   \binom {t +1} b   \cdots \binom{ t+s-1}   b }{\binom b  b   \binom {b+1} b \cdots \binom {b+s -1} b\binom{b+s}b\cdots \binom{t-1}b\phantom{\binom t  b   \binom {t +1} b   \cdots \binom{ t+s-1}   b} }&\text{if $b+s\le t$}\vspace{8pt}\\
\frac{\phantom{\binom b  b   \binom {b+1} b \cdots \binom {t-1} b} \binom t  b   \binom {t +1} b   \cdots\binom {b+s -1} b\binom {b+s} b\cdots \binom{ t+s-1}   b }{\binom b  b   \binom {b+1} b \cdots \binom {t-1} b\binom t  b   \binom {t +1} b   \cdots\binom {b+s -1} b\phantom{b\binom {b+s} b\cdots \binom{ t+s-1}   b } }&\text{if $t\le b+s$}\end{cases}\\
&=&\frac{ \binom {b+s}  b      \cdots \binom{ t+s-1}   b }{\binom b  b     \cdots \binom {t -1} b }&=&\det M_{b+s,b,t-b,t-b}.\end{array}
$$ For (2), the entry of $M_{t,t-b,s,s}$ in position $(r,c)$ is 
$$\binom{t}{t-b+(r-1)-(c-1)}=\binom t{t-b+r-c}=\binom t{b-r+c}=\binom t{b+(c-1)-(r-1)},$$ which is the entry of 
$M_{t,b,s,s}$ in position $(c,r)$. The middle equality used the fact that $\binom ab=\binom a{b-a}$ for all integers $a$ and $b$ with $0\le a$. 
\end{proof}

Often we consider the data
\begin{equation}\label{data3'}\text{$(\pmb k,n,d,\gamma)$ where $\pmb k$ is a field, $n$ and $d$ are positive integers and $\gamma$ is a non-negative integer.}\end{equation}
From this data we create the $n$-tuple $\pmb a=d\!:\!n$. 

\begin{Observation}\label{2c}Consider   $(\pmb k,n,d,\gamma)$ as described in   {\rm(\ref{data3'})}  with $n=2$ and $\gamma$ even. If inequality {\rm(\ref{Stan})} holds for the data 
$(\pmb k,n,d\!:\!n,\gamma)$ holds, then inequality {\rm(\ref{Stan})} holds for the data $(\pmb k,n,d\!:\!n,\gamma+1)$.
\end{Observation}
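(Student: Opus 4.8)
The plan is to strip the definitions down to a statement about a polynomial ring. Set $n=2$ and $\pmb a=d\!:\!2=(d,d)$, write $P=\pmb k[x_1,x_2]$, $I=(x_1^d,x_2^d)$, and $L=x_1+x_2$. For any non-negative integer $\delta$ we have $J(\pmb k,2,d\!:\!2,\delta)=(I:_PL^{\delta})/I$, so ``inequality (\ref{Stan}) holds for the data $(\pmb k,2,d\!:\!2,\delta)$'' says exactly that $(I:_PL^{\delta})_i\subseteq I_i$ for every $i<\mathrm{MN}(2,d\!:\!2,\delta)$. A one-line computation with the floor function, using that $\gamma$ is \emph{even}, gives $\mathrm{MN}(2,d\!:\!2,\gamma)=d-\gamma/2$ and $\mathrm{MN}(2,d\!:\!2,\gamma+1)=d-\gamma/2-1=\mathrm{MN}(2,d\!:\!2,\gamma)-1$; in particular both integers are at most $d$.

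Granting the hypothesis, I would then take a homogeneous $z\in I:_PL^{\gamma+1}$ of degree $i<\mathrm{MN}(2,d\!:\!2,\gamma+1)=\mathrm{MN}(2,d\!:\!2,\gamma)-1$ and aim to show $z\in I_i$; since $i<d$ this simply means $z=0$. The device is to multiply once by $L$: from $L^{\gamma+1}z\in I$ one gets $L^{\gamma}(Lz)\in I$, i.e. $Lz\in I:_PL^{\gamma}$, and $\deg(Lz)=i+1<\mathrm{MN}(2,d\!:\!2,\gamma)$, so the hypothesis forces $Lz\in I_{i+1}$. But $i+1\le d-1<d$ and $I$ is generated in degree $d$, so $I_{i+1}=0$; hence $Lz=0$ in the integral domain $P$, and therefore $z=0$. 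This shows $(I:_PL^{\gamma+1})_i\subseteq I_i$ for all $i<\mathrm{MN}(2,d\!:\!2,\gamma+1)$, which is inequality (\ref{Stan}) for the data $(\pmb k,2,d\!:\!2,\gamma+1)$.

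I do not expect a genuine obstacle; the one point that needs care is the parity bookkeeping in the first step. If $\gamma$ were odd, then $\mathrm{MN}$ would not decrease when $\gamma$ is replaced by $\gamma+1$, and the ``gain one degree of room by multiplying by $L$'' trick would no longer close the gap, so the hypothesis ``$\gamma$ even'' is used essentially. One should also record that the degenerate case $\gamma=0$ causes no trouble: the hypothesis is then vacuous ($J$ is the zero module) and the argument above applies verbatim. (Alternatively, one could translate everything into the language of injectivity of multiplication by $L$ on $A(\pmb k,2,d\!:\!2)$ as in Corollary \ref{degree}, but working directly with the colon ideals in $P$ is shorter and avoids needing $\pmb k$ to be infinite.)
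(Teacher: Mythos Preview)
Your argument is correct and is essentially the same as the paper's: you reduce to the colon ideals in $P=\pmb k[x_1,x_2]$, use that $I=(x_1^d,x_2^d)$ vanishes below degree $d$, and then multiply by $L$ once to pass from $I:L^{\gamma+1}$ to $I:L^{\gamma}$, invoking that $P$ is a domain. The paper phrases the conclusion as ``$\deg b\ge d-c-1$'' rather than ``$z=0$'', but this is only cosmetic.
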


\begin{proof} Let $\gamma=2c$ and $L=x_1+x_2$.  We are given that $d-c\le \operatorname{mgd}\, \left(\frac{(x_1^d,x_2^d):L^{2c}}{(x_1^d,x_2^d)}\right)$. We must show that $d-c-1\le \operatorname{mgd}\, \left(\frac{(x_1^d,x_2^d):L^{2c+1}}{(x_1^d,x_2^d)}\right)$. We have  $d-c-1<d-c\le \operatorname{mgd}\, (x_1^d,x_2^d)$; so we are also given that $d-c\le \operatorname{mgd}\, \left( (x_1^d,x_2^d)\!:\!L^{2c} \right)$ and it suffices to prove that $d-c-1\le \operatorname{mgd}\, \left( (x_1^d,x_2^d)\!:\!L^{2c+1}\right)$. Take a non-zero  homogeneous element $b$ of $(x_1^d,x_2^d)\!:\!L^{2c+1}$. It follows that $bL$ is in $(x_1^d,x_2^d)\!:\!L^{2c}$; so, by hypothesis, the degree of $bL$ is at least $d-c$; and therefore, the degree of $b$ is at least $d-c-1$. \end{proof}

\begin{Lemma}\label{29.4}  Fix   $(\pmb k,n,d,\gamma)$, as described in  {\rm(\ref{data3'})} with $2\le n$, and let  $$\delta=\min\{\mathrm{MN}(n,d\!:\!n,\gamma)-1, d-1\}.$$  If $n=2$, then assume that  $\det M_{d, \lfloor \frac \gamma 2\rfloor,\lfloor \frac \gamma 2\rfloor,\lfloor \frac \gamma 2\rfloor} \neq 0$ in $\pmb k$.
 If $3\le n$, then assume that 
\begin{enumerate}\item  $\det M_{\gamma, d-1-\delta+\ell,\delta-\ell+1,\delta-\ell+1}\neq 0$ in $\pmb k$, and  \item inequality {\rm(\ref{Stan})} holds for the data $(\pmb k,n-1,d\!:\!(n-1),\gamma-2\ell+2\delta-d+1)$ \end{enumerate}for
all $\ell$ with $0\le \ell\le \min\{\delta,\gamma\}$. 
Then inequality {\rm(\ref{Stan})} holds for the data $(\pmb k,n,d\!:\!n,\gamma)$.
\end{Lemma}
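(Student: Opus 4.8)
The plan is to fix a nonzero homogeneous element $b$ in the colon ideal $(x_1^d,\dots,x_n^d)\!:\!(x_1+\dots+x_n)^\gamma$ of $\pmb k[x_1,\dots,x_n]$ and show $\deg b \ge \mathrm{MN}(n,d\!:\!n,\gamma)$, which by Definition \ref{data3} is exactly the content of inequality (\ref{Stan}). The base case $n=2$ is essentially a direct computation: writing $L=x_1+x_2$ and $\gamma$ in terms of $\lfloor\gamma/2\rfloor$, the obstruction to a low-degree element of $(x_1^d,x_2^d)\!:\!L^\gamma$ is governed by a square Toeplitz matrix of binomial coefficients coming from expanding $L^\gamma$ modulo $(x_1^d,x_2^d)$; the hypothesis $\det M_{d,\lfloor\gamma/2\rfloor,\lfloor\gamma/2\rfloor,\lfloor\gamma/2\rfloor}\neq 0$ in $\pmb k$ forces the relevant linear system to have only the zero solution in the forbidden degree range, and Observation \ref{2c} handles the passage between even and odd $\gamma$. (One should expect to run the $n=2$ argument for even $\gamma$ and then invoke Observation \ref{2c}.)

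For $3\le n$ the plan is an induction on $n$, peeling off the last variable. Given $b$ homogeneous with $L^\gamma b\in(x_1^d,\dots,x_n^d)$, write $b=\sum_{j\ge 0} b_j x_n^j$ with $b_j\in\pmb k[x_1,\dots,x_{n-1}]$, and reduce modulo $x_n^d$ so only $0\le j\le d-1$ matter. Expanding $L^\gamma=(x_1+\dots+x_{n-1}+x_n)^\gamma$ by the binomial theorem and collecting powers of $x_n$, the condition $L^\gamma b\in(x_1^d,\dots,x_{n-1}^d,x_n^d)$ becomes, for each exponent of $x_n$ in the range $0,\dots,d-1$, a congruence modulo $(x_1^d,\dots,x_{n-1}^d)$ relating the $b_j$ via binomial coefficients $\binom{\gamma}{\cdot}$ times powers of $(x_1+\dots+x_{n-1})$. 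This is precisely where the matrices $M_{\gamma,d-1-\delta+\ell,\delta-\ell+1,\delta-\ell+1}$ enter: their non-vanishing in $\pmb k$ lets us solve the triangular-up-to-invertible-block system and express the low-degree components $b_j$ (those forcing $\deg b<\mathrm{MN}(n,d\!:\!n,\gamma)$) in terms of one another, while the surviving relations say that certain $\pmb k[x_1,\dots,x_{n-1}]$-combinations lie in colon ideals of the form $(x_1^d,\dots,x_{n-1}^d)\!:\!(x_1+\dots+x_{n-1})^{\gamma-2\ell+2\delta-d+1}$. Hypothesis (2) — inequality (\ref{Stan}) for $(\pmb k,n-1,d\!:\!(n-1),\gamma-2\ell+2\delta-d+1)$ — then bounds the degrees of those combinations from below, and bookkeeping with $\mathrm{MN}$ and $E$ (Notation \ref{E}), together with the definition $\delta=\min\{\mathrm{MN}(n,d\!:\!n,\gamma)-1,\,d-1\}$, propagates these bounds back to a lower bound on $\deg b$ that is exactly $\mathrm{MN}(n,d\!:\!n,\gamma)$.

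The main obstacle I anticipate is the combinatorial bookkeeping in the inductive step: correctly identifying which coefficient $b_j$ is pinned down by which block $M_{\gamma,d-1-\delta+\ell,\delta-\ell+1,\delta-\ell+1}$, verifying that the twist parameter $\gamma-2\ell+2\delta-d+1$ really is the exponent that appears after eliminating the low-degree $b_j$'s, and checking that the index ranges match ($0\le\ell\le\min\{\delta,\gamma\}$ should be exactly the set of $\ell$ for which a nontrivial constraint survives). The two sub-cases $\delta=\mathrm{MN}(n,d\!:\!n,\gamma)-1$ versus $\delta=d-1$ in the definition of $\delta$ will likely need to be tracked separately, since they control whether it is the module $J$ or the exponent $d$ of the variable being eliminated that is the binding constraint. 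The algebraic heart — that non-vanishing of a Toeplitz determinant of binomials converts ``$L^\gamma b$ lies in a monomial complete intersection'' into a solvable linear system over $\pmb k[x_1,\dots,x_{n-1}]$ — is a clean linear-algebra argument once the matrix is correctly extracted; the difficulty is purely in extracting it and in the degree arithmetic, not in any deep new idea.
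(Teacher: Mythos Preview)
Your plan is correct and follows essentially the same route as the paper: expand $(L+x_n)^\gamma$ binomially, read off the coefficient of each $x_n^k$ to get a system whose coefficient matrix is (a submatrix of) $M_{\gamma,\ast,\ast,\ast}$, then use the determinant hypothesis to invert successive square blocks and, by descending induction on the index $j$ of $u_j$, feed each $u_{\delta-\ell}$ into the colon ideal with exponent $\gamma-2\ell+2\delta-d+1$ where hypothesis~(2) forces it into $(x_1^d,\dots,x_{n-1}^d)$. One point to watch that you did not flag: when $\gamma<\delta$ the inductive sweep over $0\le\ell\le\min\{\delta,\gamma\}$ only reaches $u_{\delta-\gamma},\dots,u_\delta$, and the remaining coefficients $u_0,\dots,u_{\delta-\gamma-1}$ require a separate (easy) argument using $\binom{\gamma}{k-j}=0$ for $k-j>\gamma$, which the paper handles at the very end.
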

\begin{proof} It is clear that  inequality {\rm(\ref{Stan})} holds for the data $(\pmb k,n,d\!:\!n,\gamma)$  when   $n(d-1)+1\le \gamma$  and also when   $\gamma=0$ .  Henceforth, we assume that 
$1\le \gamma\le n(d-1)$. Also, if $n=2$ and the conclusion holds for even $\gamma$, then Observation   \ref{2c} shows that the conclusion holds for for odd $\gamma$. Henceforth, when $n=2$ we assume that $\gamma$ is even. We prove that 
\begin{equation}\label{32-29-reduction}\begin{array}{l}\text{every element of $ (x_1^d,\dots,x_n^d)\!:\!_{\pmb k[x_1,\dots,x_n]}(x_1+\dots+x_n)^\gamma $  of degree equal to $\left\lfloor\frac{n(d-1)-\gamma}{2}\right\rfloor$}\\\text{is in  $(x_1^d,\dots,x_n^d)$}.\end{array}\end{equation} 
Indeed, once we have shown (\ref{32-29-reduction}), then the usual trick involving socle degree   yields that every homogeneous element of 
$(x_1^d,\dots,x_n^d)\!:\!_{\pmb k[x_1,\dots,x_n]}(x_1+\dots+x_n)^\gamma $ of degree at most $\left\lfloor\frac{n(d-1)-\gamma}{2}\right\rfloor$    is already in $(x_1^d,\dots,x_n^d)$. Of course, $\left\lfloor\frac{n(d-1)-\gamma}{2}\right\rfloor=\mathrm{MN}(n,d\!:\!n,\gamma)-1$.
 
In light of the goal (\ref{32-29-reduction}),
fix a homogeneous polynomial $u\in \pmb k[x_1,\dots,x_n]$, with $$u\in (x_1^d,\dots,x_n^d)\!:\!(x_1+\dots+x_n)^\gamma$$ and  \begin{equation}\label{32-29-degu}\deg u=\left\lfloor\frac{n(d-1)-\gamma}{2}\right\rfloor.\end{equation} 
 We   show that $u\in  (x_1^d,\dots,x_n^d)$. We write $u$ as an element of $(k[x_1,\dots,x_{n-1}])[x_n]$. The part of $u$ that has degree at least $d$ in $x_n$ already is in $(x_1^d,\dots,x_n^d)$. No harm is done if we ignore this part of $u$ and merely keep those terms that have degree in $x_n$ of degree $d-1$ or less. The parameter $\delta$ satisfies   $\delta=\min\{\deg u,d-1\}$.
Write $u=\sum_{j=0}^{\delta} u_jx_n^j$ with $u_j$ homogeneous of degree $\deg u-j$ in $k[x_1,\dots,x_{n-1}]$. We   show that $u_0,\dots u_{\delta}$ are in $(x_1^d,\dots,x_{n-1}^d)$. Let $L$ be the linear form $$L=x_1+\dots+x_{n-1}$$ in $\pmb k[x_1,\dots,x_{n-1}]$.
If $0\le k\le d-1$, then  the coefficient of $x_n^k$ in 
$$ (L+x_n)^\gamma u =\left(\sum\limits_{i\in \mathbb Z}\binom{\gamma }iL^{\gamma-i}x_n^i\right)\left(\sum\limits_{j=0}^{\delta} u_jx_n^j\right)=\sum\limits_{0\le k}\left(\sum\limits_{j=0}^{\delta}\binom{\gamma}{k-j}L^{\gamma+j-k}u_j \right)x_n^k$$ is in $(x_1^d,\dots,x_{n-1}^d)$. (Recall that the binomial coefficient $\binom{\gamma }i$ is defined for all integers  $i$; furthermore, the product $\binom{\gamma }iL^{\gamma-i}$ is in $\pmb k[x_1,\dots,x_{n-1}]$ because $\binom{\gamma }i$ is zero whenever $\gamma-i$ is negative.)
It follows that 
\begin{equation}\label{32-29-26.0''!}0\le k\le d-1\implies L^{\gamma-k}\sum_{j=0}^{\delta}\binom{\gamma}{k-j}L^{j}u_j\in (x_1^d,\dots,x_{n-1}^d).\end{equation} We express (\ref{32-29-26.0''!}) as a statement about the entries of a product of matrices. Each entry in the product \begin{equation}\label{32-29-26.1''!}\left [\begin{matrix} L^\gamma &\\&\ddots&\\&&L^{\gamma-(d-1)}\end{matrix}\right] M_{\gamma,0,d,\delta+1}\left[\begin{matrix}u_0L^0\\\vdots \\u_{\delta}L^{\delta} \end{matrix}\right]\end{equation}is in $(x_1^d,\dots,x_{n-1}^d)$.
The matrix on the left of (\ref{32-29-26.1''!}) is a $d\times d$ diagonal matrix; the exponent decreases by $1$ for each step down the diagonal.   The exponent starts at $\gamma$ and decreases to $\gamma-(d-1)$. The product (\ref{32-29-26.1''!}) makes sense in $\pmb k[x_1,\dots,x_{n-1}]$ because it is merely a re-phrasing of (\ref{32-29-26.0''!}) which clearly makes sense in $\pmb k[x_1,\dots,x_{n-1}]$. On the other hand, some of the individual expressions in the matrix on the left of  (\ref{32-29-26.1''!}) might actually be rational functions rather than polynomials. This does not cause a problem because before we employ (\ref{32-29-26.1''!}) (or any of its successors -- especially (\ref{32-29-26.2''!})) we multiply  on the left by a matrix of polynomials which has the effect of clearing the denominators, see (\ref{32-29-28.7}).

We have $\delta +1$ ``unknowns'' $u_0,\dots,u_{\delta}$. We need only keep $\delta+1$ equations. In other words, we may  throw away the top $d-1-\delta$ rows of (\ref{32-29-26.1''!}). This means that we may also remove the top $d-1-\delta$ rows of $M_{\gamma,0,d,\delta+1}$ to obtain 
$M_{\gamma,d-1-\delta,\delta+1,\delta+1}$. Of course, we use 
\begin{equation}\label{32-29-BD}\left[\begin{matrix} A&0\\0&B\end{matrix}\right]
\left[\begin{matrix} C\\D\end{matrix}\right]=\left[\begin{matrix} AC\\BD\end{matrix}\right].\end{equation} At this point,  each entry of the product

\begin{equation}\label{32-29-26XX!}\left [\begin{matrix} L^{\gamma-(d-1-\delta)} &\\&\ddots&\\&&L^{\gamma-(d-1)}\end{matrix}\right] M_{\gamma,d-1-\delta,\delta+1,\delta+1}\left[\begin{matrix}u_0L^0\\\vdots \\u_{\delta}L^{\delta} \end{matrix}\right]\end{equation}is in $(x_1^d,\dots,x_{n-1}^d)$.

We prove that the $u_j$ are in $(x_1^d,\dots,x_{n-1}^d)$ by descending induction on $j$ beginning at $j=\delta$ and continuing until $j=0$. As soon as we learn that a given $u_j$ is in $(x_1^d,\dots,x_{n-1}^d)$, we create a smaller square system of equations by removing $u_j$ and the top equation. We remove the top equation because it  is the equation which is multiplied   by the highest power of $L$. In practice, we find it convenient to set up the entire family of systems of equations -- one for each parameter ``$\ell $'' -- and then quickly apply the induction. To that end, we fix $\ell $ with \begin{equation}\label{32-29-fix}\begin{cases}0\le \ell \le \min\{\deg u,d-1,\gamma\}&\text{if $3\le n$}\\ \ell=0&\text{if $2=n$.}\end{cases}\end{equation} Recall that $\delta=\min\{\deg u,d-1\}$ so \begin{equation}\label{32-29-tdelta}\ell \le\delta.\end{equation}   Delete the top $\ell $ rows and the left most $\ell $ columns of the matrix on the left of (\ref{32-29-26XX!}), the top $\ell $ rows and the right most $\ell $ columns of the middle matrix, and   the bottom $\ell $ rows of the column vector on the right. We obtain that each entry of the product
\begin{equation}\label{32-29-26.2''!}\left [\begin{matrix} L^{\gamma-(d-1-\delta)-\ell } &\\&\ddots&\\&&L^{\gamma-(d-1)}\end{matrix}\right] M_{\gamma,d-1-\delta+\ell ,\delta-\ell +1,\delta-\ell +1}\left[\begin{matrix}u_0L^0\\\vdots \\u_{\delta-\ell }L^{\delta-\ell } \end{matrix}\right]\end{equation} 
is in $(x_1^d,\dots,x_{n-1}^d,u_{\delta+1-\ell },\dots,u_{\delta})$.
 Be sure to notice that the matrix on the left of (\ref{32-29-26XX!}) is a block diagonal matrix so the idea of (\ref{32-29-BD}) applies once again and so deleting  the top $\ell $ rows and the left most $\ell $ columns of the matrix on the left and  the top $\ell $ rows of the matrix in the middle merely deletes the top $\ell $ rows from the product. Also notice that it is legal to remove the right most $\ell $ columns of the middle matrix in (\ref{32-29-26XX!}) and   the bottom $\ell $ rows of the column vector on the right because we moved this information to the other side of the equation. The middle matrix $M_{\gamma,d-1-\delta,\delta+1,\delta+1}$ of (\ref{32-29-26XX!})
 becomes  $M_{\gamma,d-1-\delta+\ell ,\delta-\ell +1,\delta-\ell +1}$ in (\ref{32-29-26.2''!}) because the new entry in position $(1,1)$ is $\binom \gamma {d-1-\delta+\ell }$ and the new matrix has $\delta-\ell +1$ rows and columns.   Multiply the $(\delta-\ell +1)\times (\delta-\ell +1)$ matrix on the left of (\ref{32-29-26.2''!}) by 
the $(\delta-\ell +1)\times (\delta-\ell +1)$ matrix
\begin{equation}\label{32-29-28.7}\left [\begin{matrix} L^{0} &\\&\ddots&\\&&L^{\delta-\ell }\end{matrix}\right]\end{equation} to get $L^{\gamma-(d-1-\delta)-\ell }$ times the identity matrix, which is a scalar matrix. The scalar matrix commutes with  $M_{\gamma,d-1-\delta-\ell ,\delta-\ell +1,\delta-\ell +1}$. 
Notice that each entry of the matrix of (\ref{32-29-28.7}) is a polynomial by (\ref{32-29-tdelta}). Furthermore, we show  that $L^{\gamma-(d-1-\delta)-\ell }$ is  also a polynomial by showing that 
\begin{equation}\label{32-29-poly} d-1+\ell \le \gamma+\delta.\end{equation} Indeed, if $n=2$, then 
\begin{equation}\label{3.12.5}d-1\le d-1+\left\lfloor\frac \gamma2\right\rfloor=\min\left\{d-1+\gamma,d-1+\left\lfloor\frac \gamma2\right\rfloor\right\}=\gamma+\delta.\end{equation} To establish (\ref{32-29-poly}) for $3\le n$, we first   
observe  that
\begin{equation} \label{32-29-gammad} \gamma\le d-1\implies d-1\le \deg u.\end{equation} Indeed, if one adds $(n-1)(d-1)-\gamma$ to both sides of $\gamma\le d-1$, then one obtains $$(n-1)(d-1)\le n(d-1)-\gamma.$$ Divide to see 
$\frac{n-1}2(d-1)\le \frac{n(d-1)-\gamma}2$. The parameter $n$ is at least $3$ by hypothesis; so $1\le \frac{n-1}2$ and  $d-1\le \frac{n(d-1)-\gamma}2$.  Furthermore, $d-1$ is an integer; so, $d-1\le \lfloor\frac{n(d-1)-\gamma}2\rfloor=\deg u$. The statement (\ref{32-29-gammad}) has been established. We return to the proof of (\ref{32-29-poly}). 
 If $d-1\le \gamma$, then (\ref{32-29-poly}) holds because $\ell \le \delta$ by (\ref{32-29-tdelta}). On the other hand, if $\gamma\le d-1$, then $d-1\le \deg u$ by (\ref{32-29-gammad}) hence, $\delta=\min\{d-1,\deg u\}=d-1$ and (\ref{32-29-poly}) still holds because $\ell \le \gamma$ by (\ref{32-29-fix}). Thus, (\ref{32-29-poly}) holds in all cases and $L^{\gamma-(d-1-\delta)-\ell }$ is   a polynomial, as was promised in the paragraph beneath (\ref{32-29-26.1''!}).

Multiply (\ref{32-29-26.2''!}) on the left by (\ref{32-29-28.7}) to see that  each entry of the product 
\begin{equation}\label{32-29-7:01}M_{\gamma,d-1-\delta+\ell ,\delta-\ell +1,\delta-\ell +1}L^{\gamma-(d-1-\delta)-\ell }\left[\begin{matrix}u_0L^0\\\vdots \\u_{\delta-\ell }L^{\delta-\ell } \end{matrix}\right]\end{equation} is in the ideal $(x_1^d,\dots,x_{n-1}^d,u_{\delta+1-\ell },\dots,u_{\delta})$ of $\pmb k[x_1,\dots,x_{n-1}]$. 

At this point, we focus on the case $n=2$. We saw at the very beginning of the proof that when $n=2$, then it suffices to prove the result for even values of $\gamma$. So we assume $\gamma$ is even. We have $\ell=0$ and $\delta=\deg u=d-1-\frac \gamma2$; so
$$\det M_{\gamma,d-1-\delta+\ell ,\delta-\ell +1,\delta-\ell +1}=\det M_{\gamma,\frac \gamma2,d-\frac \gamma2,d-\frac \gamma2}=\det M_{d,\frac \gamma2,\frac \gamma2,\frac \gamma2}.$$ The final inequality is due to Proposition \ref{32-29-26.0''!}, item (1). Thus, $M_{\gamma,d-1-\delta+\ell ,\delta-\ell +1,\delta-\ell +1}$ is an invertible matrix and (\ref{32-29-7:01}) shows that 
$$u_iL^{\frac \gamma 2+i}=u_i L^{\gamma-(d-1-\delta)-\ell +i}\in (x_1^d)$$ for $0\le i\le \delta$. On the other hand, $\deg u_i+i=\deg u=d-1-\frac \gamma2$; so, $\deg u_iL^{\frac \gamma 2+i}=d-1$; and therefore, $u_i=0$ for $0\le i\le \delta$. The proof is complete when $n=2$.

Henceforth, we assume that $n=3$. Use (\ref{32-29-tdelta}) and (\ref{32-29-poly}) to
 verify that 
$$0\le d-1-\delta+\ell \le \gamma\quad\text{and}\quad 1\le \delta-\ell +1. $$
Recall from Proposition \ref{32-29-p69} that if  $b,s,t$ are integers, with  $0\le b\le t$   and $1\le s$, then  $\det M_{t,b,s,s}$ is a non-zero integer. 
It follows from  hypothesis (1) that 
$M_{\gamma,d-1-\delta+\ell ,\delta-\ell +1,\delta-\ell +1}$ is an invertible matrix over $\pmb k$.
Multiply (\ref{32-29-7:01}) by the inverse of $M_{\gamma,d-1-\delta+\ell ,\delta-\ell +1,\delta-\ell +1}$ in order to see that
\begin{equation}\label{32-29-26.3''!}u_{\delta-\ell } \in (x_1^d,\dots,x_{n-1}^d,u_{\delta+1-\ell },\dots,u_{\delta})\!:L^{\gamma-2\ell +2\delta-d+1}.\end{equation} Apply (\ref{32-29-poly}) and (\ref{32-29-tdelta}) to see   that 
$$0\le \gamma -2\ell +2\delta-d+1.$$ According to hypothesis (2), the inequality (\ref{Stan}) holds for the data $(\pmb k,n-1,d,\gamma-2\ell+2\delta-d+1)$; and  therefore, 
\begin{equation}\label{32-29-small}\begin{array}{rcl}\deg u_{\delta-\ell }&=&\deg u-(\delta-\ell )<1+\ell -\delta+\deg u =1+\ell -\delta+\left\lfloor\frac{n(d-1)-\gamma}{2}\right\rfloor\vspace{5pt}\\&=&1+\left\lfloor\frac{(n-1)(d-1)-(\gamma-2\ell +2\delta-d+1)}{2}\right\rfloor \le  \operatorname{mgd}\,\frac{(x_1^d,\dots,x_{n-1}^d)\!:L^{\gamma-2\ell +2\delta-d+1}}{(x_1^d,\dots,x_{n-1}^d)}.\end{array}\end{equation}
Suppose that we have shown that $u_{\delta +1-\ell },\dots,u_{\delta}$ are in $(x_1^d,\dots,x_{n-1}^d)$, for some $\ell $ as described in (\ref{32-29-fix}), then (\ref{32-29-26.3''!}) gives 
$$u_{\delta-\ell }\in (x_1^d,\dots,x_{n-1}^d)\!:L^{\gamma-2\ell +2\delta-d+1};$$ but (\ref{32-29-small}) shows that $\deg u_{\delta-\ell }<\operatorname{mgd}\,\frac{(x_1^d,\dots,x_{n-1}^d)\!:L^{\gamma-2\ell +2\delta-d+1}}{(x_1^d,\dots,x_{n-1}^d)}$; so, $u_{\delta-\ell }\in (x_1^d,\dots,x_{n-1}^d)$. Thus, induction gives $u_{\delta-\ell }\in (x_1^d,\dots,x_{n-1}^d)$ for all $\ell $ described in (\ref{32-29-fix}). The maximum possible value of $\ell $ is $$\min\{\deg u,d-1,\gamma\}=\min\{\delta,\gamma\}.$$ We have shown that  
\begin{equation}\label{32-29-have}u_{\delta-\min\{\delta,\gamma\}},\dots,u_{\delta} \in (x_1^d,\dots,x_{n-1}^d).\end{equation} If $\delta\le \gamma$, then $\min\{\delta,\gamma\}=\delta$; hence $u_{0},\dots,u_{\delta}$ are in $(x_1^d,\dots,x_{n-1}^d)$ and the proof is complete.

Henceforth, we assume that $\gamma<\delta$. In particular, we have $\gamma<d-1$ (since $\delta=\min\{d-1,\deg u\}$); hence, (\ref{32-29-gammad}) shows that 
$d-1\le \deg u$; and therefore $\delta=d-1$. In this case,   (\ref{32-29-have}) shows that
\begin{equation}\label{32-29-in}u_{d-1-\gamma},\dots,u_{\delta} \in(x_1^d,\dots,x_{n-1}^d).\end{equation}
Let $k$ be a fixed parameter with $\gamma\le k\le d-1$. We know from (\ref{32-29-26.0''!}) that
\begin{equation}\label{32-29-ABC}A+B+C\in (x_1^d,\dots,x_{n-1}^d),\end{equation}
where $$\begin{array}{l}A=\sum\limits_{j=0}^{k-\gamma-1}\binom{\gamma}{k-j}L^{j+\gamma-k}u_j,\vspace{5pt}\\
B=\sum\limits_{j=k-\gamma}^{k-\gamma}\binom{\gamma}{k-j}L^{j+\gamma-k}u_j,\ \text{and}\vspace{5pt}\\
C=\sum\limits_{j=k-\gamma+1}^{d-1}\binom{\gamma}{k-j}L^{j+\gamma-k}u_j.\end{array}$$
We see that $\gamma<k-j$ in $A$; hence, $\binom{\gamma}{k-j}=0$, and $A=0$. We also see that $B=u_{k-\gamma}$ and $C$ is in $(u_{k-\gamma+1},\dots,u_{d-1})$. Induction on $k$ starting at $k=d-2$ and descending until $k=\gamma$ shows that $u_{d-2-\gamma},\dots,u_0\in (x_1^d,\dots,x_{n-1}^d,u_{d-1-\gamma},\dots,u_{\delta})$. The most recent ideal is equal to $(x_1^d,\dots,x_{n-1}^d)$ by (\ref{32-29-in}), and the proof is complete.
\end{proof}

 \begin{Theorem}  \label{n=4} Fix the data $(\pmb k,d)$ with $\pmb k$ a field and $d$ a positive integer. 
If     $\det M_{d,c,c,c} \ne 0$ in   ${\pmb k}$ for all integers $c$ with $1\le c\le d$, then the following statements hold.
\begin{enumerate}
\item The inequality {\rm(\ref{Stan})} holds for the data $(\pmb k,3,d\!:\!3,d)$.  
\item If the field $\pmb k$ is infinite, then the  ring $A(\pmb k,4,d\!:\!4)$   has the WLP. 
\end{enumerate}\end{Theorem}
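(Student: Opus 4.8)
The plan is to obtain statement (1) as a direct instance of Lemma \ref{29.4}, and then read off statement (2) from Corollary \ref{degree}. For (1) I would apply Lemma \ref{29.4} with $n=3$ and $\gamma=d$. The relevant numerology is $\mathrm{MN}(3,d\!:\!3,d)=1+\lfloor\frac{2d-3}{2}\rfloor=d-1$, so $\delta=\min\{\mathrm{MN}(3,d\!:\!3,d)-1,\,d-1\}=d-2$ and $\min\{\delta,\gamma\}=d-2$; thus the parameter $\ell$ in Lemma \ref{29.4} ranges over $0\le\ell\le d-2$. When $d=1$ we have $3(d-1)+1=1=\gamma$, so the conclusion of Lemma \ref{29.4} is immediate from the opening line of its proof, and I would assume $d\ge 2$ from that point on.

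Next I would check the two hypotheses of Lemma \ref{29.4} for each such $\ell$. Hypothesis (1) asks that $\det M_{\gamma,\,d-1-\delta+\ell,\,\delta-\ell+1,\,\delta-\ell+1}=\det M_{d,\,1+\ell,\,d-1-\ell,\,d-1-\ell}$ be nonzero in $\pmb k$. Here $b=1+\ell$ and $s=d-1-\ell$ satisfy $0\le b\le d-1=t-1$, so Proposition \ref{Properties}(2) applies and yields $\det M_{d,\,1+\ell,\,d-1-\ell,\,d-1-\ell}=\det M_{d,\,d-1-\ell,\,d-1-\ell,\,d-1-\ell}=\det M_{d,c,c,c}$ with $c=d-1-\ell$; as $\ell$ runs through $0,\dots,d-2$, the index $c$ runs through $1,\dots,d-1$, so hypothesis (1) follows from the hypothesis of the theorem. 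Hypothesis (2) asks that inequality (\ref{Stan}) hold for the data $(\pmb k,2,d\!:\!2,\gamma')$ with $\gamma'=\gamma-2\ell+2\delta-d+1=2d-3-2\ell$, i.e.\ for the odd values $\gamma'\in\{1,3,\dots,2d-3\}$. I would establish each of these by a second application of Lemma \ref{29.4}, now with $n=2$: its only hypothesis is $\det M_{d,\lfloor\gamma'/2\rfloor,\lfloor\gamma'/2\rfloor,\lfloor\gamma'/2\rfloor}\ne 0$, and since $\gamma'$ is odd the proof of Lemma \ref{29.4} routes through Observation \ref{2c} to the even value $\gamma'-1$, for which the pertinent determinant is $\det M_{d,c,c,c}$ with $c=(\gamma'-1)/2\in\{0,1,\dots,d-2\}$. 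For $c\ge 1$ this is nonzero by the hypothesis of the theorem; the value $c=0$ (that is, $\gamma'=1$) falls under the ``$\gamma=0$'' base case of Lemma \ref{29.4} once Observation \ref{2c} is applied, so nothing further is needed. Hence Lemma \ref{29.4} with $n=3$ applies, and inequality (\ref{Stan}) holds for the data $(\pmb k,3,d\!:\!3,d)$, which is (1).

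For (2), assume $\pmb k$ is infinite and apply Corollary \ref{degree} with $n=4$ and $\pmb a=d\!:\!4$. Condition (5) of that corollary reads $\mathrm{MN}(3,d\!:\!3,d)\le\operatorname{mgd}\, J(\pmb k,3,d\!:\!3,d)$, which is exactly inequality (\ref{Stan}) for $(\pmb k,3,d\!:\!3,d)$ as established in (1), while condition (6) is the assertion that $A(\pmb k,4,d\!:\!4)$ has the WLP. Since (5) and (6) are equivalent, (2) follows.

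The one genuinely substantive move is the observation, via Proposition \ref{Properties}(2), that the off-diagonal matrices $M_{d,\,1+\ell,\,d-1-\ell,\,d-1-\ell}$ produced by Lemma \ref{29.4} are transposes of the ``balanced'' matrices $M_{d,c,c,c}$ whose non-vanishing has been assumed; the rest is bookkeeping. The point that needs the most care is tracking the index ranges for $\ell$ and $\gamma'$ and confirming that the nested $n=2$ invocation of Lemma \ref{29.4} never calls on anything beyond the same family of determinants (together with the trivial $0\times 0$, $\gamma=0$ boundary case), so that no hidden hypothesis or circularity is introduced; one should also note that in fact only $\det M_{d,c,c,c}\ne 0$ for $1\le c\le d-1$ is used, slightly less than what the theorem assumes.
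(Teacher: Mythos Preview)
Your proposal is correct and follows essentially the same route as the paper's proof: apply Lemma \ref{29.4} with $(n,\gamma)=(3,d)$, compute $\delta=d-2$, use Proposition \ref{Properties}(2) to convert $\det M_{d,1+\ell,d-1-\ell,d-1-\ell}$ into $\det M_{d,c,c,c}$ with $c=d-1-\ell$, and discharge the $n=2$ hypothesis via a second invocation of Lemma \ref{29.4} (routing odd $\gamma'$ through Observation \ref{2c} to even $\gamma'-1$, with the $\gamma=0$ case trivial). Your explicit treatment of $d=1$ and your remark that only $1\le c\le d-1$ is actually used are accurate refinements that the paper leaves implicit.
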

 \begin{proof} According to Corollary \ref{degree} it suffices to prove (1). To that end, we apply Lemma \ref{29.4} to the data $(\pmb k,n,d,\gamma)=(\pmb k,3,d,d)$. In this case, we have $$\delta=\mathrm{MN}(3,d\!:\!3,d)-1=\left\lfloor\frac{3(d-1)-d}2\right\rfloor=d-2.$$ Once we verify that \begin{enumerate}\item  $\det M_{\gamma, d-1-\delta+\ell,\delta-\ell+1,\delta-\ell+1}\neq 0$ in $\pmb k$, and  \item inequality {\rm(\ref{Stan})} holds for the data $(\pmb k,n-1,d\!:\!(n-1),\gamma-2\ell+2\delta-d+1)$ \end{enumerate}for
all $\ell$ with $0\le \ell\le \min\{\delta,\gamma\}$,  
then  we may conclude that inequality {\rm(\ref{Stan})} holds for the data $(\pmb k,n,d\!:\!n,\gamma)$.
That is, once we verify that \begin{enumerate}\item[(a)]  $\det M_{d, 1+\ell,d-1-\ell,d-1-\ell}\neq 0$ in $\pmb k$, and  \item[(b)] inequality {\rm(\ref{Stan})} holds for the data $(\pmb k,2,d\!:\!2,2(d-2-\ell)+1)$ \end{enumerate}for
all $\ell$ with $0\le \ell\le d-2$,  
then   inequality {\rm(\ref{Stan})} holds for the data $(\pmb k,3,d\!:\!3,d)$.

Proposition \ref{Properties}, item (2), shows that $\det M_{d,\ell+1,d-1-\ell,d-1-\ell}=\det M_{d,c,c,c}$ for $c=d-1-\ell$ and this determinant is non-zero in $\pmb k$ by hypothesis; so (a) has been verified. Observation \ref{2c} shows that 
 if inequality {\rm(\ref{Stan})} holds for the data $(\pmb k,2,d\!:\!2,2(d-2-\ell))$, then inequality {\rm(\ref{Stan})} also holds for the data $(\pmb k,2,d\!:\!2,2(d-2-\ell)+1)$. It is clear that  inequality {\rm(\ref{Stan})} also holds for the data $(\pmb k,2,d\!:\!2,0)$. Thus, to establish (b), it suffices to verify that inequality {\rm(\ref{Stan})} also holds for the data $(\pmb k,2,d\!:\!2,2c)$, with $1\le c\le d-1$; therefore, according to   Lemma \ref{29.4}, it suffices to verify that $\det M_{d,c,c,c}$ is not zero in $\pmb k$ for $1\le c\le d-1$ and once again this assertion is guaranteed by our hypothesis. So (a) and (b) have been verified and the proof is complete.  
\end{proof}

\begin{Corollary}\label{Cor-big-n} Fix the data $(\pmb k,n,d,\gamma)$ as described in {\rm(\ref{data3'})} with $2\le n$.
If $\left\lfloor \frac{nd-n+2+\gamma}2\right\rfloor\le p$, then the inequality {\rm(\ref{Stan})} holds for the data $(\pmb k,n,d\!:\!n,\gamma)$.
\end{Corollary}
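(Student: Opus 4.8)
The plan is to prove the statement by induction on $n\ge 2$, using Lemma~\ref{29.4} as the inductive engine. Throughout put $\pmb a=d\!:\!n$, so that $|\pmb a|=nd$ and $\mathrm{MN}(n,\pmb a,\gamma)-1=\left\lfloor\frac{nd-n-\gamma}2\right\rfloor$, and let $\delta=\min\{\mathrm{MN}(n,\pmb a,\gamma)-1,\,d-1\}$, exactly as in Lemma~\ref{29.4}. The cases $\gamma=0$ and $n(d-1)+1\le\gamma$ are disposed of immediately, since inequality~(\ref{Stan}) holds trivially for such $\gamma$ (this is the opening line of the proof of Lemma~\ref{29.4}); so I may assume $1\le\gamma\le n(d-1)$, which forces $0\le\delta$. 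The only arithmetic input I shall need is that the hypothesis is equivalent to
$$\left\lfloor\frac{nd-n+\gamma}2\right\rfloor+1=\left\lfloor\frac{nd-n+2+\gamma}2\right\rfloor\le p,$$
together with the two bounds $\delta\le\left\lfloor\frac{nd-n-\gamma}2\right\rfloor$ and $\delta\le d-1$ built into the definition of $\delta$.

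For the base case $n=2$, Observation~\ref{2c} reduces the odd-$\gamma$ situation to the even one (the hypothesis for $\gamma-1$ being implied by that for $\gamma$), and for even $\gamma$ only the range $2\le\gamma\le 2(d-1)$ fails to be trivial. In that range $1\le\gamma/2\le d-1$, so $0\le\gamma/2\le d$ and $1\le\gamma/2$, while $d+\gamma/2=\left\lfloor\frac{2d+\gamma}2\right\rfloor\le p$ by the hypothesis; hence Proposition~\ref{32-29-p69} gives $\det M_{d,\gamma/2,\gamma/2,\gamma/2}\neq 0$ in $\pmb k$. This is precisely the hypothesis Lemma~\ref{29.4} requires when $n=2$, so inequality~(\ref{Stan}) holds for $(\pmb k,2,d\!:\!2,\gamma)$.

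For the inductive step, let $n\ge 3$ and assume the corollary for $n-1$ (for all $d$, $\gamma$). I verify the two hypotheses of Lemma~\ref{29.4} for every $\ell$ with $0\le\ell\le\min\{\delta,\gamma\}$. For hypothesis~(1), I apply Proposition~\ref{32-29-p69} to $M_{\gamma,\,d-1-\delta+\ell,\,\delta-\ell+1,\,\delta-\ell+1}$: the side conditions $0\le d-1-\delta+\ell\le\gamma$ and $1\le\delta-\ell+1$ are obtained from (\ref{32-29-tdelta}) and (\ref{32-29-poly}) exactly as in the proof of Lemma~\ref{29.4}, and the remaining condition $t+s\le p$ reads $\gamma+(\delta-\ell+1)\le p$, which holds because
$$\gamma+\delta-\ell+1\le\gamma+\left\lfloor\frac{nd-n-\gamma}2\right\rfloor+1=\left\lfloor\frac{nd-n+\gamma}2\right\rfloor+1\le p.$$
For hypothesis~(2), write $\gamma'=\gamma-2\ell+2\delta-d+1$; the proof of Lemma~\ref{29.4} shows $\gamma'\ge 0$, so $(\pmb k,n-1,d\!:\!(n-1),\gamma')$ is admissible, and by the induction hypothesis it suffices to check $\left\lfloor\frac{(n-1)d-(n-1)+2+\gamma'}2\right\rfloor\le p$. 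A direct computation gives $(n-1)d-(n-1)+2+\gamma'=(nd-n+\gamma)+2(\delta-\ell-d+2)$, whence
$$\left\lfloor\frac{(n-1)d-(n-1)+2+\gamma'}2\right\rfloor=\left\lfloor\frac{nd-n+\gamma}2\right\rfloor+(\delta-\ell-d+2)\le\left\lfloor\frac{nd-n+\gamma}2\right\rfloor+1\le p,$$
where the middle step uses $\delta-\ell\le\delta\le d-1$. With both hypotheses of Lemma~\ref{29.4} verified, that lemma yields inequality~(\ref{Stan}) for $(\pmb k,n,d\!:\!n,\gamma)$, completing the induction.

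The computations displayed above are essentially the whole of the proof; the one place to be careful is the interaction of the floor functions, which is handled by collapsing everything to the single estimate $\delta\le\min\{\lfloor\frac{nd-n-\gamma}2\rfloor,\,d-1\}$. No idea beyond Lemma~\ref{29.4} and Proposition~\ref{32-29-p69} is needed, so I expect the write-up to be short.
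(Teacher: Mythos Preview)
Your proof is correct and follows essentially the same approach as the paper: induction on $n$ with Lemma~\ref{29.4} supplying the inductive step and Proposition~\ref{32-29-p69} supplying the nonvanishing of the relevant determinants. The only difference is cosmetic: the paper splits into the two cases $\delta=d-1$ and $\delta=\lfloor\frac{nd-n-\gamma}{2}\rfloor$ to bound the quantity $X=\max\{\gamma+\delta+1,\,Y\}$, whereas you avoid the case split by invoking the bound $\delta\le\lfloor\frac{nd-n-\gamma}{2}\rfloor$ for hypothesis~(1) and the bound $\delta\le d-1$ for hypothesis~(2), which is a mild streamlining.
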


\begin{proof} 
The proof proceeds by induction on $n$. If $n=2$ and $d+\lfloor\frac \gamma 2\rfloor\le p$, then Proposition \ref{32-29-p69} shows that $\det M_{d,\lfloor \frac \gamma 2\rfloor,\lfloor \frac \gamma 2\rfloor,\lfloor \frac \gamma 2\rfloor}$ is not zero in $\pmb k$; hence, Lemma \ref{29.4} gives that (\ref{Stan}) holds for the data $(\pmb k,2,d\!:\!2,\gamma)$. Assume by induction that the result holds at $n-1$ and that we are given $(\pmb k,n,d,\gamma)$ with $\left\lfloor \frac{nd-n+2+\gamma}2\right\rfloor\le p$. Let 
$$\delta= \min\{\mathrm{MN}(n,d\!:\!n,\gamma)-1, d-1\}=\min\left\{\left\lfloor \frac{nd-n-\gamma}2\right\rfloor,d-1\right
\}.$$  Apply Lemma \ref{29.4}. 
Once we verify that \begin{enumerate}\item  $\det M_{\gamma, d-1-\delta+\ell,\delta-\ell+1,\delta-\ell+1}\neq 0$ in $\pmb k$, and  \item inequality {\rm(\ref{Stan})} holds for the data $(\pmb k,n-1,d\!:\!(n-1),\gamma-2\ell+2\delta-d+1)$ \end{enumerate}for
all $\ell$ with $0\le \ell\le \min\{\delta,\gamma\}$,  
then  we may conclude that inequality {\rm(\ref{Stan})} holds for the data $(\pmb k,n,d\!:\!n,\gamma)$.
Proposition \ref{32-29-p69} shows that if $\gamma+\delta+1\le p$, then hypothesis (1) holds. The induction hypothesis shows that if \begin{equation}\label{3.last}\left\lfloor \frac{[(n-1)d-(n-1)+2]+[\gamma+2\delta-d+1]}{2}\right\rfloor\le p,\end{equation}then
hypothesis (2) is satisfied.  Let $Y$ be the expression on the left side of inequality (\ref{3.last}) and $X=\max\{\gamma+\delta+1,Y\}$. We complete the proof by showing that $X\le \left\lfloor \frac{nd-n+2+\gamma}2\right\rfloor$. If $d-1\le \left\lfloor \frac{nd-n-\gamma}2\right\rfloor$, then $\delta=d-1$ and
$$\gamma+\delta+1=\gamma+d\le \gamma+\left\lfloor \frac{nd-n-\gamma}2\right\rfloor+1=\left\lfloor \frac{nd-n+2+\gamma}2\right\rfloor=Y;$$ thus, $X=Y=\left\lfloor \frac{nd-n+2+\gamma}2\right\rfloor$. On the other hand, if 
$\left\lfloor \frac{nd-n-\gamma}2\right\rfloor\le d-2$, then $\delta=\left\lfloor \frac{nd-n-\gamma}2\right\rfloor$ and
$$Y=\left\lfloor \frac{(n-2)d-n+4+\gamma}{2}\right\rfloor+\delta=
\left\lfloor \frac{nd-n-\gamma}{2}\right\rfloor-d+2+\gamma+\delta\le \gamma+\delta;$$
thus, $$X=\gamma+\delta+1=\gamma+\left\lfloor \frac{nd-n-\gamma}2\right\rfloor+1=\left\lfloor \frac{nd-n+2+\gamma}2\right\rfloor.$$
\end{proof}

\begin{Theorem}  \label{5<=n}
Fix the data $(\pmb k,n,d)$, where $\pmb k$ is a field of positive characteristic $p$, and $d$ and $n$ are positive integers with $3\le n$. Assume that $\left\lfloor\frac{n(d-1)+3}2\right\rfloor\le p$. Then the following statements hold.
\begin{enumerate}
\item The inequality {\rm (\ref{Stan})} holds for the data $(\pmb k,n-1,d\!:\!(n-1),d)$.
 \item If the field $\pmb k$ is infinite, then 
the ring $A(\pmb k,n,d\!:\!n)$ has the WLP.
 \end{enumerate}
\end{Theorem}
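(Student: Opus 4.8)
\smallskip\noindent\textbf{Proof sketch.} The plan is to derive the entire statement from Corollary~\ref{Cor-big-n}, letting statement~(2) then fall out of Corollary~\ref{degree}.

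First I would reduce~(2) to~(1). Apply Corollary~\ref{degree} to the $n$-tuple $\pmb a=d\!:\!n$: condition~(5) of that corollary is exactly inequality~(\ref{Stan}) for the data $(\pmb k,n-1,d\!:\!(n-1),d)$, which is statement~(1) here, while condition~(6) is exactly that $A(\pmb k,n,d\!:\!n)$ has the WLP, which is statement~(2) and is available precisely because $\pmb k$ is assumed infinite. Hence it suffices to prove~(1).

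For~(1) I would invoke Corollary~\ref{Cor-big-n} with its parameter ``$n$'' replaced by $n-1$ and its parameter ``$\gamma$'' replaced by $d$. The hypothesis $3\le n$ gives $2\le n-1$, so the corollary applies to the data $(\pmb k,n-1,d,d)$. Its numerical hypothesis is $\left\lfloor\frac{(n-1)d-(n-1)+2+d}{2}\right\rfloor\le p$, and the elementary identity $(n-1)d-(n-1)+2+d=n(d-1)+3$ shows that this is exactly the standing hypothesis $\left\lfloor\frac{n(d-1)+3}{2}\right\rfloor\le p$. Corollary~\ref{Cor-big-n} then yields inequality~(\ref{Stan}) for the data $(\pmb k,n-1,d\!:\!(n-1),d)$, which is statement~(1).

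I do not expect a substantive obstacle here: all of the real work --- the descending induction packaged in Lemma~\ref{29.4} and the non-vanishing of the determinants $\det M_{t,b,s,s}$ supplied by Proposition~\ref{32-29-p69} --- has already been absorbed into Corollary~\ref{Cor-big-n}. The only points requiring care are the index shift (Corollary~\ref{Cor-big-n} is applied at $n-1$ rather than $n$, and it is exactly the hypothesis $3\le n$ that legitimizes this) and the bookkeeping identity above that matches $nd-n+2+\gamma$ evaluated at $(n-1,d)$ with $n(d-1)+3$.
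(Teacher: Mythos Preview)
Your proof is correct and follows essentially the same approach as the paper: apply Corollary~\ref{Cor-big-n} to the data $(\pmb k,n-1,d,d)$ to obtain (1), and then use Corollary~\ref{degree} to deduce (2). Your write-up in fact supplies more detail than the paper's terse two-line proof, including the verification that $3\le n$ gives $2\le n-1$ and that $(n-1)d-(n-1)+2+d=n(d-1)+3$.
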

\begin{proof} Apply Corollary \ref{Cor-big-n} to the data $(\pmb k,n-1,d,d)$ in order to prove assertion (1). Now apply   Corollary \ref{degree} in order  to prove (2). \end{proof}

\section{Producing relations of small degrees}
Recall Definition \ref{xi} and Notation \ref{E}. In this section we use the Frobenius endomorphism to produce elements of $\mathrm {Syz}(\pmb k,4,d\!:\!4)$ of low degree. This calculation gives rise to a necessary condition for \begin{equation}\label{intro-4} E(4,d\!:\!4)\le \operatorname{mgd}\,\mathrm {Syz}(\pmb k,4,d\!:\!4).\end{equation}The necessary condition obtain in Theorem \ref{3} is shown to be sufficient in Theorem \ref{4}. Of course, when the field $\pmb k$ is infinite, then the inequality (\ref{intro-4}) is equivalent to $A(\pmb k,4,d\!:\!4)$ has the WLP, see Corollary \ref{degree}. 
\begin{Lemma}\label{Lemma25.20'}Consider the data $(\pmb k,d)$, where $\pmb k$ is a field of positive characteristic $p$, and $d$ is a positive integer. Write $d=kq+r$ for integers $k,q,r$ with  $0\le r\le q-1$ and $q=p^e$, for some positive integer $e$. Then the following statements hold.
\begin{itemize}
\item[{\rm(1)}] If $1\le k$, then $\operatorname{mgd}\, \mathrm {Syz}(\pmb k,4,d\!:\!4)\le q\operatorname{mgd}\, \mathrm {Syz}(\pmb k,4,k\!:\!4)+4r$.
\item[{\rm(2)}] If $0\le k$, then $\operatorname{mgd}\, \mathrm {Syz}(\pmb k,4,d\!:\!4)\le q\operatorname{mgd}\, \mathrm {Syz}(\pmb k,4,(k+1)\!:\!4)$. 
\item[{\rm(3)}] If $E(4,d\!:\!4)\le \operatorname{mgd}\, \mathrm {Syz}(\pmb k,4,d\!:\!4)$, then $E(4,(k+1)\!:\!4)-1\le \operatorname{mgd}\, \mathrm {Syz}(\pmb k,4,(k+1)\!:\!4)$.
\end{itemize}\end{Lemma}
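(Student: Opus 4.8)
First I would establish the two Frobenius-type inequalities (1) and (2), since (3) will follow from (1), (2), and the bookkeeping between the various $E$-values. For (1), the idea is that applying the $q$-th power Frobenius to a syzygy on $x_1^k,\dots,x_3^k,(x_1+\dots+x_3)^k$ produces a syzygy on $x_1^{kq},\dots,x_3^{kq},(x_1+\dots+x_3)^{kq}$ in characteristic $p$ (Frobenius is a ring map and commutes with addition, so $(x_1+\dots+x_3)^{kq}=((x_1+\dots+x_3)^k)^{[q]}$), and the degrees get multiplied by $q$. Then to pass from $kq$ to $d=kq+r$, I would multiply each of the four generators $x_i^{kq}$ (and the last one) by the appropriate $x_i^r$, i.e. use that $x_i^d = x_i^r\cdot x_i^{kq}$ and $(x_1+\dots+x_3)^d = (x_1+\dots+x_3)^r\cdot(x_1+\dots+x_3)^{kq}$; a syzygy $[f_1,\dots,f_4]$ on the $kq$-powers yields the syzygy $[x_1^r f_1, x_2^r f_2, x_3^r f_3, (x_1+\dots+x_3)^r f_4]$ on the $d$-powers, which raises each component's degree by $r$, hence the total bound $q\operatorname{mgd}\,\mathrm{Syz}(\pmb k,4,k\!:\!4)+4r$. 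Wait — I should be careful: the minimal generator degree of the syzygy module is the minimum over components' degrees plus... actually $\operatorname{mgd}$ of a submodule of $\bigoplus \pmb k[x_1,x_2,x_3](-a_i)$ is computed after the degree shifts, so a syzygy of internal degree $m$ on the $k$-powers (where ``internal degree'' means the common value $\deg f_i + k$) Frobenius-transports to internal degree $qm$ on the $kq$-powers, and then the $x_i^r$-multiplication adds $r$ once (each $x_i^r f_i$ has degree one more... no: $\deg(x_i^r f_i) + d = \deg f_i + r + kq + r = (\deg f_i + k) q\cdot\frac{1}{?}$) — I will need to track this shift arithmetic precisely; the $+4r$ rather than $+r$ suggests the degree accounting in this paper's conventions attributes $r$ to each of four generator-degree shifts, so I would recompute $\operatorname{mgd}$ directly from $E(4,d\!:\!4) = \lfloor (4d-1)/2\rfloor$ and verify consistency. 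For (2), the same Frobenius argument applied with exponent $k+1$ in place of $k$ and $r=0$ (since $(k+1)q \ge d$ is not quite it — rather one uses $d \le (k+1)q$ and pads down, or more likely one uses the surjection coming from $x_i^{(k+1)q}\mid$-divisibility the other way); I would model (2) on whatever direction makes the inequality $\operatorname{mgd}\,\mathrm{Syz}(\pmb k,4,d\!:\!4)\le q\operatorname{mgd}\,\mathrm{Syz}(\pmb k,4,(k+1)\!:\!4)$ come out, namely lifting a syzygy on the $(k+1)$-powers via Frobenius to one on the $(k+1)q$-powers and then observing $x_i^d \mid x_i^{(k+1)q}$, so multiplying by $x_i^{(k+1)q-d}$ gives syzygies on the $d$-powers — but that adds degree, contradicting a $\le$ with no $+$ term, so instead I suspect (2) uses that a syzygy on $x_1^{(k+1)q},\dots$ restricts/pulls back along the inclusion of ideals $(x_1^d,\dots)\supseteq(x_1^{(k+1)q},\dots)$; I would sort out the correct functoriality here.

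For the main statement (3): assume $E(4,d\!:\!4)\le \operatorname{mgd}\,\mathrm{Syz}(\pmb k,4,d\!:\!4)$. If $r\ge 1$, combine with (1): $E(4,d\!:\!4) \le q\operatorname{mgd}\,\mathrm{Syz}(\pmb k,4,k\!:\!4)+4r$, and I would want to deduce $E(4,(k+1)\!:\!4)-1\le \operatorname{mgd}\,\mathrm{Syz}(\pmb k,4,(k+1)\!:\!4)$. The natural bridge is: $\operatorname{mgd}\,\mathrm{Syz}(\pmb k,4,k\!:\!4) \le \operatorname{mgd}\,\mathrm{Syz}(\pmb k,4,(k+1)\!:\!4)$ is false in general (it can go either way), so instead I would play (1) against (2): from (2), $\operatorname{mgd}\,\mathrm{Syz}(\pmb k,4,d\!:\!4)\le q\operatorname{mgd}\,\mathrm{Syz}(\pmb k,4,(k+1)\!:\!4)$, hence $E(4,d\!:\!4)\le q\operatorname{mgd}\,\mathrm{Syz}(\pmb k,4,(k+1)\!:\!4)$, so $\operatorname{mgd}\,\mathrm{Syz}(\pmb k,4,(k+1)\!:\!4)\ge E(4,d\!:\!4)/q = \lfloor(4d-1)/2\rfloor/q$, and I would compare this to $E(4,(k+1)\!:\!4)-1 = \lfloor(4(k+1)-1)/2\rfloor - 1 = \lfloor(4k+3)/2\rfloor - 1 = 2k+1-1 = 2k$. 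So the claim reduces to the purely numerical inequality $\lceil E(4,d\!:\!4)/q\rceil \ge 2k$, i.e. $\lfloor(4d-1)/2\rfloor \ge q(2k-1)+1$ or thereabouts — using $d=kq+r$ with $0\le r\le q-1$, $4d-1 = 4kq+4r-1$, so $\lfloor(4kq+4r-1)/2\rfloor = 2kq + \lfloor(4r-1)/2\rfloor = 2kq+2r-1$ (for $r\ge 1$; for $r=0$ it is $2kq-1$), and dividing by $q$: $(2kq+2r-1)/q = 2k + (2r-1)/q \ge 2k$ when $r\ge 1$, and since the left side of the needed inequality is an integer $\operatorname{mgd}$, $\operatorname{mgd}\,\mathrm{Syz}(\pmb k,4,(k+1)\!:\!4)\ge \lceil(2kq+2r-1)/q\rceil \ge 2k+1 > 2k$. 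The $r=0$ case needs separate attention (then $d=kq$, $k\ge 1$ forced, and $(2kq-1)/q$ rounds up to $2k$, giving exactly $2k = E(4,(k+1)\!:\!4)-1$, which is what is claimed). I would write this numerical step carefully with the floor/ceiling identities.

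**Main obstacle.** The genuinely delicate part is getting the degree-shift bookkeeping exactly right in (1) and (2) — in particular understanding why the shift contributes $4r$ (not $r$) in (1) and contributes nothing beyond the factor $q$ in (2), which hinges on the paper's convention that $\operatorname{mgd}$ of the syzygy module is measured relative to the shifted free module $\bigoplus \pmb k[x_1,x_2,x_3](-a_i)$, and on correctly identifying the direction of the map relating $\mathrm{Syz}$ at exponent $d$ to $\mathrm{Syz}$ at exponent $(k+1)q$ versus $kq$. Once those two inequalities are pinned down with the right constants, statement (3) is an elementary (if fiddly) floor-function computation combining them with the closed forms $E(4,m\!:\!4) = \lfloor(4m-1)/2\rfloor = 2m-1$.
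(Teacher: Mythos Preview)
Your overall strategy---Frobenius plus monomial multiplication for (1) and (2), then derive (3) from (2) via floor arithmetic---is exactly the paper's approach, and your argument for (3) is correct and essentially identical to the paper's. The gaps are in the explicit constructions for (1) and (2).

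\textbf{Part (1).} Your proposed syzygy $[x_1^r f_1,\,x_2^r f_2,\,x_3^r f_3,\,L^r f_4]$ on the $d$-powers is \emph{not} a syzygy: applying it to $[x_1^d,x_2^d,x_3^d,L^d]$ gives $\sum f_i x_i^{kq+2r}+f_4 L^{kq+2r}$, which has no reason to vanish. The correct move is to multiply the Frobenius'd relation $\sum v_i^q x_i^{kq}+v_4^q L^{kq}=0$ by the \emph{single} polynomial $x_1^r x_2^r x_3^r L^r$; then each term absorbs the needed $x_i^r$ (respectively $L^r$) and the remaining factor becomes the new syzygy entry, e.g.\ the first entry is $v_1^q\,x_2^r x_3^r L^r$. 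This is why the degree goes up by $4r$: you multiplied by something of degree $4r$.

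\textbf{Part (2).} Here you actually wrote down the right construction---Frobenius the $(k+1)$-syzygy and use $x_i^{(k+1)q}=x_i^{q-r}\cdot x_i^d$ to get the syzygy $[v_1^q x_1^{q-r},\ldots,v_4^q L^{q-r}]$ on the $d$-powers---and then rejected it on degree grounds. The rejection is mistaken: the degree in $\bigoplus Q(-d)$ is
\[
\deg(v_i^q x_i^{q-r})+d \;=\; q\bigl(m-(k+1)\bigr)+(q-r)+(kq+r)\;=\;qm,
\]
where $m=\operatorname{mgd}\,\mathrm{Syz}(\pmb k,4,(k+1)\!:\!4)$. The increase from multiplying by $x_i^{q-r}$ is exactly cancelled by the change in shift from $-(k+1)q$ to $-d$, so no extra term appears. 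Once you accept this, (2) is immediate and your derivation of (3) from it goes through as written.
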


\begin{proof}To prove (1), let $\eta=[v_1,v_2,v_3,v_4]^{\rm t}\in \pmb k[x_1,x_2,x_3](-k)^4$ be a homogeneous non-zero element of 
$\mathrm {Syz}(\pmb k,4,k\!:\!4)$ of degree $\operatorname{mgd}\, \mathrm {Syz}(\pmb k,4,k\!:\!4)$. This gives the equation $$\xi(\pmb k,4,k\!:\!4)\cdot \eta=0.$$Apply the Frobenius endomorphism $(\underline{\phantom{x}})^q$ and multiply by $x_1^rx_2^rx_3^r(x_1+x_2+x_3)^r$ to see that
$$\left[\begin{matrix}
v_1^q  x_2^rx_3^r(x_1+x_2+x_3)^r\\
v_2^q x_1^r x_3^r(x_1+x_2+x_3)^r\\
v_3^q x_1^rx_2^r (x_1+x_2+x_3)^r\\
v_4^q x_1^rx_2^rx_3^r 
\end{matrix}\right] $$ is a non-zero homogeneous element of $\mathrm {Syz}(\pmb k,4,d\!:\!4)$ of degree $q\operatorname{mgd}\, \mathrm {Syz}(\pmb k,4,k\!:\!4)+4r$.
 For (2),  let $\eta=[v_1,v_2,v_3,v_4]^{\rm t}\in \pmb k[x_1,x_2,x_3](-k-1)^4$ be a homogeneous  element of 
${\mathrm {Syz}(\pmb k,4,(k+1)\!:\!4)}$ of degree $\operatorname{mgd}\, \mathrm {Syz}(\pmb k,4,(k+1)\!:\!4)$, with $\eta\neq 0$. It follows that 
$$ \left[\begin{matrix} v_1^qx_1^{q-r}\vspace{2pt}\\v_2^qx_2^{q-r}\vspace{2pt}\\v_3^qx_3^{q-r}\vspace{2pt}\\v_4^q(x_1+x_2+x_3)^{q-r}\end{matrix}\right]\in \pmb k[x_1,x_2,x_3](-d)^4$$   is a non-zero element of $\mathrm {Syz}(\pmb k,4,d\!:\!4)$ of degree $q\operatorname{mgd}\, \mathrm {Syz}(\pmb k,4,(k+1)\!:\!4)$.  To prove (3),  apply (2) to see that 
$$2qk-1\le 2(qk+r)-1=2d-1\le \operatorname{mgd}\, \mathrm {Syz}(\pmb k,4,d\!:\!4)\le q\operatorname{mgd}\, \mathrm {Syz}(\pmb k,4,(k+1)\!:\!4).$$ Divide by $q$, and recall that $E(4,(k+1)\!:\!4)=2k+1$,  to obtain the conclusion. 
\end{proof}

\begin{Lemma}\label{firstint} Consider the data $(\pmb k,k)$ where $\pmb k$ is a field of characteristic $p\ge 3$ and $k$ is an  integer with $1\le k\le p-1$. 
  Then the following statements are equivalent{\rm:} 
\begin{itemize}
\item[{\rm (1)}]  $E(4,k\!:\!4)-1\le \operatorname{mgd}\, \mathrm {Syz}(\pmb k,4,k\!:\!4)$,
\item[{\rm (2)}]  $1\le k \le \frac{p+1}{2}$, and  
\item[{\rm (3)}]   $E(4,k\!:\!4)=\operatorname{mgd}\, \mathrm {Syz}(\pmb k,4,k\!:\!4)$. \end{itemize}
\end{Lemma}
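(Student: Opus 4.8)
The plan is to prove $(1)\Rightarrow(2)\Rightarrow(3)\Rightarrow(1)$, after two standing reductions. First, $E(4,k\!:\!4)=\big\lfloor\frac{4k-1}{2}\big\rfloor=2k-1$, whereas $\operatorname{mgd}\,\mathrm {Kos}(\pmb k,4,k\!:\!4)=2k$; so by Remark~\ref{after-degree} the modules $\mathrm {Syz}(\pmb k,4,k\!:\!4)$ and $\overline{\mathrm {Syz}}(\pmb k,4,k\!:\!4)$ agree in every degree $<2k$, which covers all degrees relevant to (1) and (3). Second, by the displayed isomorphism in the proof of Corollary~\ref{degree}, $\overline{\mathrm {Syz}}(\pmb k,4,k\!:\!4)\cong J(\pmb k,3,k\!:\!3,k)(-k)$, so $\operatorname{mgd}\,\mathrm {Syz}(\pmb k,4,k\!:\!4)=k+\operatorname{mgd}\,J(\pmb k,3,k\!:\!3,k)$ in the relevant range, and I will work with $J(\pmb k,3,k\!:\!3,k)=(0:_B\ell^k)$, where $B=\pmb k[x_1,x_2,x_3]/(x_1^k,x_2^k,x_3^k)$ and $\ell=x_1+x_2+x_3$.

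The implication $(3)\Rightarrow(1)$ is immediate since $E(4,k\!:\!4)-1<E(4,k\!:\!4)$. For $(1)\Rightarrow(2)$ I would use the Frobenius identity $(x_1+x_2+x_3)^p=x_1^p+x_2^p+x_3^p$: because $1\le k\le p-1$ it rewrites as $x_1^{p-k}x_1^k+x_2^{p-k}x_2^k+x_3^{p-k}x_3^k=(x_1+x_2+x_3)^{p-k}(x_1+x_2+x_3)^k$, so $\big[-x_1^{p-k},-x_2^{p-k},-x_3^{p-k},(x_1+x_2+x_3)^{p-k}\big]^{\mathrm t}$ is a nonzero homogeneous element of $\mathrm {Syz}(\pmb k,4,k\!:\!4)$ of degree $p$; hence $\operatorname{mgd}\,\mathrm {Syz}(\pmb k,4,k\!:\!4)\le p$. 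If (1) holds, then $2k-2=E(4,k\!:\!4)-1\le p$, and since $p$ is odd while $2k-2$ is even this forces $2k-2\le p-1$, i.e. $k\le\frac{p+1}{2}$, which is (2).

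For $(2)\Rightarrow(3)$ I would first record an unconditional upper bound: $\operatorname{mgd}\,J(\pmb k,3,k\!:\!3,k)\le k-1$, equivalently $\operatorname{mgd}\,\mathrm {Syz}(\pmb k,4,k\!:\!4)\le 2k-1=E(4,k\!:\!4)$. Indeed the $\pmb k$-linear map $u\mapsto\overline{\ell^k u}$ carrying the degree-$(k-1)$ forms of $\pmb k[x_1,x_2,x_3]$ into $B_{2k-1}$ has source of dimension $\binom{k+1}{2}$ and, since $B$ is Gorenstein of socle degree $3(k-1)$, target of dimension $\dim B_{k-2}=\binom{k}{2}$; as $\binom{k+1}{2}>\binom{k}{2}$ the kernel is nonzero, and a nonzero kernel element has degree $k-1<k$, hence is nonzero in $B$ and lies in $(0:_B\ell^k)$. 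Together with the reductions above this shows (3) is equivalent to $E(4,k\!:\!4)\le\operatorname{mgd}\,\mathrm {Syz}(\pmb k,4,k\!:\!4)$, which by Corollary~\ref{degree} is precisely inequality~(\ref{Stan}) for the data $(\pmb k,3,k\!:\!3,k)$. Finally, $1\le k\le\frac{p+1}{2}$ gives $\big\lfloor\frac{3k-3+2+k}{2}\big\rfloor=2k-1\le p$, so Corollary~\ref{Cor-big-n} applied with $n=3$ and $d=\gamma=k$ yields that inequality; hence (2) implies (3), closing the cycle. (Theorem~\ref{n=4} could be used in place of Corollary~\ref{Cor-big-n}, noting $\det M_{k,k,k,k}=1$ and $\det M_{k,c,c,c}\neq 0$ in $\pmb k$ for $c\le k-1$ by Proposition~\ref{32-29-p69} together with $k+c\le 2k-1\le p$.)

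The main obstacle is not a single hard estimate but the assembly: one must thread Corollaries~\ref{degree} and~\ref{Cor-big-n}, Remark~\ref{after-degree}, and the Frobenius construction of this section together while keeping the identification $\mathrm {Syz}(\pmb k,4,k\!:\!4)\leftrightarrow\overline{\mathrm {Syz}}(\pmb k,4,k\!:\!4)\leftrightarrow J(\pmb k,3,k\!:\!3,k)(-k)$ and its degree shift straight. The only genuinely new ingredients are the one-line count $\binom{k+1}{2}>\binom{k}{2}$ (which caps $\operatorname{mgd}\,\mathrm {Syz}(\pmb k,4,k\!:\!4)$ at $2k-1$) and the single explicit syzygy above (which breaks (1) once $k>\frac{p+1}{2}$); I expect the former, together with the socle-degree symmetry identifying $\dim B_{2k-1}$ with $\binom k2$, to be the most delicate bookkeeping.
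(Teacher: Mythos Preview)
Your proof is correct and follows essentially the same route as the paper's: the Frobenius syzygy of degree $p$ for $(1)\Rightarrow(2)$, and Corollary~\ref{Cor-big-n}/Theorem~\ref{5<=n} for the lower bound in $(2)\Rightarrow(3)$, are exactly what the paper uses. The one genuine difference is how you show $\operatorname{mgd}\,\mathrm{Syz}(\pmb k,4,k\!:\!4)\le 2k-1$: the paper writes down an explicit relation via $g=\frac{x_2^k-(-x_3)^k}{x_2+x_3}$ and $(x_1+x_2+x_3)^k=x_1^k+h(x_2+x_3)$, whereas you use the Hilbert-function inequality $\dim\pmb k[x_1,x_2,x_3]_{k-1}=\binom{k+1}{2}>\binom{k}{2}=\dim B_{2k-1}$ and Gorenstein duality. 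Your argument is non-constructive but shorter, and is in fact a special case of Proposition~\ref{uni}, which the paper proves later for a different purpose; the paper's explicit syzygy has the advantage of being self-contained and reusable.
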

\begin{proof} Recall that $E(4,k\!:\!4)=2k-1$. We show  $(1)\Rightarrow (2)$. The element $$\eta=\left[\begin{matrix}x_1^{p-k}\\x_2^{p-k}\\x_3^{p-k}\\-(x_1+x_2+x_3)^{p-k}\end{matrix}\right]\in \pmb k[x_1,x_2,x_3](-k)^4$$ is a non-zero element of $\mathrm {Syz}(\pmb k,4,k\!:\!4)$ of degree $p$. If $2k-2\le \operatorname{mgd}\, \mathrm {Syz}(\pmb k,4,k\!:\!4)$, then $2k-2\le p$; hence $k\le \frac{p+1}2$, since $p$ is odd. 
For $(2)\Rightarrow (3)$, note that $k\le \frac{p+1}2\implies \left\lfloor\frac{4(k-1)+3}2\right\rfloor\le p$; hence,
Theorem~\ref{5<=n} yields \begin{equation}\label{triv}\text{inequality (\ref{Stan}) holds for the data $(\pmb k,3,d\!:\!3,k)$}.\end{equation}  On the other hand, $$\begin{array}{rcl}\text{(\ref{triv}) holds} &\iff& \mathrm{MN}(3,k\!:\!3,k)\le \operatorname{mgd}\, J(\pmb k,3,k\!:\!3,k)\\
&\iff&E(4,k\!:\!4)\le \operatorname{mgd}\,\overline{\mathrm {Syz}}(\pmb k,4,k\!:\!4)\\
&\iff& E(4,k\!:\!4)\le\operatorname{mgd}\,\mathrm {Syz}(\pmb k,4,k\!:\!4)\end{array}$$ The first equivalence is the definition of (\ref{Stan}); the second equivalence is Corollary \ref{degree}; the third equivalence is Remark \ref{after-degree} since $E(4,k\!:\!4)=2k-1<2k=\operatorname{mgd}\,\mathrm {Kos}(\pmb k,4,k\!:\!4)$. Thus, 
 $$\textstyle k\le \frac{p+1}2\implies
2k-1\le \operatorname{mgd}\, \mathrm {Syz}(\pmb k,4,k\!:\!4).$$
To complete the proof of $(2)\Rightarrow (3)$ it suffices to exhibit a relation of degree $2k-1$.
Let $$ g=\frac{x_2^k-(-x_3)^k}{x_2+x_3} \in {\pmb k}[x_2, x_3],$$ and write $(x_1+x_2+x_3)^k=x_1^k+h(x_2+x_3)$, with $h \in {\pmb k}[x_1, x_2, x_3]$. We have
$$
(x_1+x_2+x_3)^kg=x_1^kg+h(x_2^k-(-x_3)^k),
$$
which is   a relation of the desired degree on $x_1^k, x_2^k, x_3^k, (x_1+x_2+x_3)^k$. The assertion $(3)\Rightarrow (1)$ is obvious. 
\end{proof}

\begin{Theorem}\label{3}Consider the data $(\pmb k,d)$ where $\pmb k$ is a field of characteristic $p\ge 3$ and $d$ is a positive integer. 
If $E(4,d\!:\!4)\le \operatorname{mgd}\, \mathrm {Syz}(\pmb k,4,d\!:\!4)$, then  
\begin{equation}\label{cond}\begin{array}{l} \text{$d=kq+r$ for integers $k,q,d$ with $1\le k\le \frac{p-1}2$, $r\in\left\{\frac{q-1}2,\frac{q+1}2\right\}$, and $q=p^e$}\\\text{for some non-negative integer $e$.}\end{array}\end{equation}
\end{Theorem}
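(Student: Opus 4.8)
The plan is to prove the contrapositive by cases, extracting from the hypothesis $E(4,d\!:\!4)\le\operatorname{mgd}\,\mathrm {Syz}(\pmb k,4,d\!:\!4)$ strong constraints on how $d$ can be written in base $p$. Write $d=kq+r$ with $q=p^e$ the smallest power of $p$ with $k\le p-1$; equivalently, take $e$ maximal so that $q=p^e$ divides into $d$ at most $p-1$ times, so $0\le r\le q-1$ and $1\le k\le p-1$. (The cases $d<p$ — i.e. $e=0$, $q=1$, $r=0$, $k=d$ — and $q=p^e$ dividing $d$ exactly are handled directly; note when $r=0$ we should instead allow the degenerate writings permitted by \eqref{cond}, so some care with $e$ versus $e-1$ is needed at the boundary.) With this normalization Lemma~\ref{Lemma25.20'}(3) applies and gives $E(4,(k+1)\!:\!4)-1\le\operatorname{mgd}\,\mathrm {Syz}(\pmb k,4,(k+1)\!:\!4)$, i.e. condition (1) of Lemma~\ref{firstint} for the integer $k+1$ — provided $k+1\le p-1$. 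By Lemma~\ref{firstint} this forces $k+1\le\frac{p+1}{2}$, that is $k\le\frac{p-1}{2}$, which is exactly the first requirement in \eqref{cond}. (The excluded case $k+1=p$, i.e. $k=p-1$, must be ruled out separately, since then $k>\frac{p-1}2$ already and we just need to contradict the hypothesis directly by producing a low-degree relation on $x_1^d,\dots,(x_1+\dots)^d$ via Frobenius from a relation in the $p$-case.)

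The remaining work is to pin down $r\in\{\frac{q-1}{2},\frac{q+1}{2}\}$. Here I would again use Frobenius to manufacture explicit small-degree syzygies on $x_1^d,\dots,x_3^d,(x_1+x_2+x_3)^d$ and compare their degrees against $E(4,d\!:\!4)=2d-1=2kq+2r-1$. Start from the degree-$p$ relation $\eta=[x_1^{p-k-1},x_2^{p-k-1},x_3^{p-k-1},-(x_1+x_2+x_3)^{p-k-1}]^{\rm t}\in\mathrm {Syz}(\pmb k,4,(k+1)\!:\!4)$ (analogue of the one in the proof of Lemma~\ref{firstint}) — valid because $k+1\le\frac{p+1}2\le p-1$ when $p\ge5$; for $p=3$ handle $k=1$ by hand. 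Apply $(\underline{\phantom x})^q$ and multiply by the appropriate monomial $x_1^{q-r-1}x_2^{q-r-1}x_3^{q-r-1}(\ldots)^{?}$, or instead apply Lemma~\ref{Lemma25.20'}(1) to a low-degree syzygy on $x_i^k$ and Lemma~\ref{Lemma25.20'}(2) to one on $x_i^{k+1}$, obtaining two relations of degrees roughly $q(2k-1)+4r$ and $q(2k+1)+4(q-r)$ — wait, more carefully: part (1) gives degree $q\operatorname{mgd}\,\mathrm {Syz}(\pmb k,4,k\!:\!4)+4r$ and part (2) gives degree $q\operatorname{mgd}\,\mathrm {Syz}(\pmb k,4,(k+1)\!:\!4)$, and since $k\le\frac{p-1}2$ Lemma~\ref{firstint}(3) tells us $\operatorname{mgd}\,\mathrm {Syz}(\pmb k,4,k\!:\!4)=2k-1$ and $\operatorname{mgd}\,\mathrm {Syz}(\pmb k,4,(k+1)\!:\!4)=2k+1$. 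So the two relations have degrees $q(2k-1)+4r$ and $q(2k+1)$. Both must be $\ge E(4,d\!:\!4)=2(kq+r)-1$. The first gives $q(2k-1)+4r\ge 2kq+2r-1$, i.e. $2r\ge q-1$, i.e. $r\ge\frac{q-1}2$. The second gives $q(2k+1)\ge 2kq+2r-1$, i.e. $2r\le q+1$, i.e. $r\le\frac{q+1}2$. Together with $r$ an integer these force $r\in\{\frac{q-1}2,\frac{q+1}2\}$ (and since $p$ is odd $q$ is odd, so both candidates are integers), which is precisely \eqref{cond}.

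I would organize the writeup as: first dispose of the trivial/boundary cases ($d<p$, $q\mid d$, $k=p-1$); then, in the main case $1\le k\le p-1$ with $k+1\le p-1$, invoke Lemma~\ref{Lemma25.20'}(3) and Lemma~\ref{firstint} to get $k\le\frac{p-1}2$; then, knowing $k\le\frac{p-1}2$, use Lemma~\ref{firstint}(3) to evaluate $\operatorname{mgd}\,\mathrm {Syz}(\pmb k,4,k\!:\!4)$ and $\operatorname{mgd}\,\mathrm {Syz}(\pmb k,4,(k+1)\!:\!4)$ exactly, and feed these into the two inequalities coming from Lemma~\ref{Lemma25.20'}(1),(2) to sandwich $r$. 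The main obstacle I anticipate is the bookkeeping at the boundary: making sure the normalization $d=kq+r$ with $1\le k\le p-1$ is compatible with the statement's more permissive phrasing (which allows $q=p^e$ for $e$ one smaller and $r$ accordingly near $q/2$), and in particular confirming that the degenerate writings in \eqref{cond} ($r=\frac{q\pm1}2$ with possibly a non-unique $(k,q,r)$) correctly cover the cases where $r$ computed in the minimal normalization happens to equal $0$ or $q-1$. The rest is a clean chain of inequalities.
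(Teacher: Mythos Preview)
Your proposal is correct and follows essentially the same route as the paper's proof: invoke Lemma~\ref{Lemma25.20'}(3) and Lemma~\ref{firstint} to force $k+1\le\frac{p+1}{2}$, then use Lemma~\ref{firstint}(3) to evaluate $\operatorname{mgd}\,\mathrm {Syz}(\pmb k,4,k\!:\!4)=2k-1$ and $\operatorname{mgd}\,\mathrm {Syz}(\pmb k,4,(k+1)\!:\!4)=2k+1$ exactly, and feed these into Lemma~\ref{Lemma25.20'}(1),(2) to sandwich $r$ between $\frac{q-1}{2}$ and $\frac{q+1}{2}$. The paper handles the boundary $d<p$ by a direct appeal to Lemma~\ref{firstint} (noting that for $1\le d\le p$, condition~\eqref{cond} is equivalent to $d\le\frac{p+1}{2}$), and does not separate out $k=p-1$ explicitly; your extra caution there is harmless but not needed, since the element $\eta$ in the proof of Lemma~\ref{firstint} $(1)\Rightarrow(2)$ still works at $k+1=p$ and already yields a contradiction.
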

\begin{proof} Notice first that $d=\frac{p+1}2$ is one of the numbers described by (\ref{cond}) because $\frac{p+1}2=kq+r$, with $k=\frac{p-1}2$, $q=p^0$, and $r=\frac{q+1}2$; and therefore if $1\le d\le p$, then $d$ is described by (\ref{cond}) if and only if $1\le d\le \frac{p+1}2$. If $d<p$, then the present result follows from Lemma \ref{firstint}. Henceforth, we consider $p\le d$. Write $d$ in the form $d=kq+r$, where $1\le k\le p-1$, $0\le r\le q-1$, and $q=p^e$ for some positive integer $e$.  Part (3) of 
Lemma~\ref{Lemma25.20'}  implies that $E(4,(k+1)\!:\!4)-1\le  \operatorname{mgd}\, \mathrm {Syz}(\pmb k,4,(k+1)\!:\!4)$; and therefore  Lemma~\ref{firstint} implies that 
  $k +1 \le \frac{p+1}{2}$,
$$2k-1=E(4,k\!:\!4)=\operatorname{mgd}\, \mathrm {Syz}(\pmb k,4,k\!:\!4),\ \ \text{and}\ \  2k+1=E(4,(k+1)\!:\!4)=\operatorname{mgd}\, \mathrm {Syz}(\pmb k,4,(k+1)\!:\!4).$$
Parts  (1) and (2) of Lemma~\ref{Lemma25.20'} now give 
$$
2(kq + r) -1= 2d-1 \le \operatorname{mgd}\, \mathrm {Syz}(\pmb k,4,d\!:\!4) \le \mathrm{min} \{ q(2k-1) +4r, q(2k+1)\},
$$
and the conclusion follows.
\end{proof}

\section{The calculation of $\det M_{d,c,c,c}$}

In this section we complete the proof of Theorem~\ref{4}.

\begin{Theorem}\label{4}Fix the data $(\pmb k,d)$ where $\pmb k$ is a  field of characteristic $p\ge 3$ and $d$ is a positive integer. Then 
$$E(4,d\!:\!4)\le \operatorname{mgd}\, \mathrm {Syz}(k,4,d\!:\!4)\iff \text{$d$ is described by {\rm(\ref{cond})}}.$$
In particular, if the field $\pmb k$ is infinite, then $$\text{$A(\pmb k,4,d\!:\!4)$ has the WLP} \iff \text{$d$ is described by {\rm(\ref{cond})}.}$$
\end{Theorem}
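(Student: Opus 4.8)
The plan is to establish the biconditional by combining the necessary condition from Theorem~\ref{3} with a sufficiency argument via Theorem~\ref{n=4}. The forward direction, namely that $E(4,d\!:\!4)\le \operatorname{mgd}\, \mathrm {Syz}(\pmb k,4,d\!:\!4)$ implies that $d$ is described by \eqref{cond}, is exactly the content of Theorem~\ref{3}, so nothing new is needed there. For the converse, I would assume $d=kq+r$ with $1\le k\le \frac{p-1}2$, $q=p^e$, and $r\in\{\frac{q-1}2,\frac{q+1}2\}$, and aim to show $E(4,d\!:\!4)=2d-1\le \operatorname{mgd}\,\mathrm {Syz}(\pmb k,4,d\!:\!4)$. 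By Theorem~\ref{n=4}(1) together with Corollary~\ref{degree} and Remark~\ref{after-degree} (note $E(4,d\!:\!4)=2d-1<2d=\operatorname{mgd}\,\mathrm {Kos}(\pmb k,4,d\!:\!4)$), it suffices to prove that $\det M_{d,c,c,c}\neq 0$ in $\pmb k$ for all $c$ with $1\le c\le d$.

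So the real work is the determinant computation, and that is where I expect the main obstacle to lie. Using the Roberts formula from Proposition~\ref{32-29-p69},
$$\det M_{d,c,c,c}=\frac{\binom dc\binom{d+1}c\cdots\binom{d+c-1}c}{\binom cc\binom{c+1}c\cdots\binom{2c-1}c},$$
so I must count the exact power of $p$ dividing the numerator and the denominator and show they agree. Equivalently, by Proposition~\ref{Properties}(1), $\det M_{d,c,c,c}=\det M_{2c,c,d-c,d-c}$ when $c\le d-1$, which may give a more symmetric expression to work with. I would use Kummer's theorem (the $p$-adic valuation of $\binom{m}{i}$ equals the number of carries when adding $i$ and $m-i$ in base $p$) to track carries. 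Writing $d=kq+r$ in base $p$, the constraint $r=\frac{q\mp1}2$ pins down the base-$p$ digits of $r$ very rigidly: $\frac{q-1}2=\frac{p^e-1}2$ has all digits equal to $\frac{p-1}2$, and $\frac{q+1}2$ has digits $\frac{p-1}2$ in positions $1,\dots,e-1$ and $\frac{p+1}2$ in position $0$. The point will be that for each $c$ in range, the carries produced in the numerator binomials $\binom{d+j}{c}$ for $j=0,\dots,c-1$ exactly match those in the denominator binomials $\binom{c+j}{c}$ for $j=0,\dots,c-1$ — a shifted-window cancellation of carry counts.

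The cleanest route is probably induction on $e$ (the ``$q=p^e$'' exponent), reducing the case $d=kq+r$ to the case $k$ (where $1\le k\le\frac{p-1}2<p$, so no $p$'s appear at all in the Roberts product and the determinant is automatically a unit by the second assertion of Proposition~\ref{32-29-p69}). For the inductive step I would split the range of $c$ according to how $c$ interacts with the ``block structure'' of $d$ in base $p$: when $c$ is small relative to $q$ the relevant binomials only see the bottom digits, and when $c$ is comparable to $d$ one uses the $M_{2c,c,d-c,d-c}$ form. The bookkeeping — carrying the induction through all residues $c$, both choices of $r$, and keeping track of which base-$p$ additions carry — is the delicate part; a self-contained lemma (this is presumably Lemma~\ref{.Delta} referenced in the introduction) isolating the valuation count is the natural way to organize it. Once $\det M_{d,c,c,c}\neq 0$ in $\pmb k$ is in hand for all $1\le c\le d$, Theorem~\ref{n=4} closes out both the syzygy statement and, over an infinite field, the WLP statement.
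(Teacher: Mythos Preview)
Your overall architecture matches the paper exactly: the forward direction is Theorem~\ref{3}, the backward direction is Theorem~\ref{n=4} fed by the nonvanishing of $\det M_{d,c,c,c}$ for $1\le c\le d$ (this is Lemma~\ref{.Delta}), and the WLP statement follows from Corollary~\ref{degree} together with Remark~\ref{after-degree}. So at the level of how the theorem is assembled from prior results, you are spot on.

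The difference is in how Lemma~\ref{.Delta} is actually proved. You propose Kummer's carry-counting plus an induction on the exponent $e$, reducing $d=kq+r$ to the base case $d=k$. The paper does \emph{not} induct on $e$ and does not invoke Kummer's theorem. Instead it expands the Roberts formula into the explicit product
\[
\frac{(d+c-1)^1(d+c-2)^2\cdots d^c\cdots(d-c+1)^1}{(2c-1)^1(2c-2)^2\cdots c^c\cdots 1^1},
\]
and for each power $p^\lambda$ computes directly the total exponent contribution $N_\lambda$ from the numerator and $D_\lambda$ from the denominator (these are sums of the multiplicities of the factors divisible by $p^\lambda$). The key algebraic fact driving the computation is that the special form of $d$ forces $d+\epsilon-\rho_\lambda$ to be an exact multiple of $p^\lambda$ for $\rho_\lambda=\frac{p^\lambda+1}{2}$, which makes the multiples of $p^\lambda$ in the window $[d-c+1,d+c-1]$ line up symmetrically and yields $N_\lambda=D_\lambda=2c(\#_\lambda+1)-p^\lambda(\#_\lambda+1)^2$ by an explicit (if somewhat delicate) sum. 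This is a single closed-form count, not an inductive reduction.

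Your proposed induction on $e$ is plausible in spirit but would need an ingredient you have not supplied: a reduction identity relating $o_p(\det M_{d,c,c,c})$ for $d=kp^e+r$ to the analogous quantity for smaller $e$ across \emph{all} $c$ simultaneously. Kummer's theorem controls individual binomial coefficients, but the product over $j=0,\dots,c-1$ of $\binom{d+j}{c}$ mixes many base-$p$ patterns as $j$ varies, and it is not clear how the carry counts telescope under $e\mapsto e-1$. The paper's direct $N_\lambda=D_\lambda$ computation sidesteps this by never comparing different values of $e$; everything is pinned down by the single arithmetic identity for $d+\epsilon-\rho_\lambda$. If you want to pursue your route, you would need to make that telescoping explicit; otherwise the paper's method is the cleaner way through.
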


\begin{proof} Theorem~\ref{3} establishes the direction $(\Rightarrow)$. The direction $(\Leftarrow)$ is shown in Theorem~\ref{n=4} and Lemma~\ref{.Delta}. The connection between $\operatorname{mgd}\, \mathrm {Syz}(\pmb k,4,d\!:\!4)$ and the WLP may be found in Corollary~\ref{degree} and Remark~\ref{after-degree}. \end{proof}

\begin{Remark}\label{char2} If $\pmb k$ is a field of characteristic $2$ and $d$ is an integer with $2\le d$, then $$\operatorname{mgd}\, \mathrm {Syz}(\pmb k,4,d\!:\!4)<E(4,d\!:\!4),$$ and, if the field $\pmb k$ is infinite, then $A(\pmb k,4,d\!:\!4)$ does not have the WLP; see (\ref{new}). 
\end{Remark}

\begin{Lemma} \label{.Delta} Let $p$  be an odd prime and $\pmb k$ a field of characteristic $p$. 
If $(d,c)$ is a 
 pair of integers  with $1\le c\le d$ and  $d$ of the form 
described in  {\rm(\ref{cond})}, then $\det M_{d,c,c,c}\neq 0$ in $\pmb k$.
\end{Lemma}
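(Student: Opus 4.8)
The plan is to prove that $\det M_{d,c,c,c}$ — which is a nonzero \emph{integer} by Proposition~\ref{32-29-p69} — is a unit modulo $p$; equivalently, that $v_p(\det M_{d,c,c,c})=0$, where $v_p$ denotes the $p$-adic valuation. The case $c=d$ is trivial, since $\det M_{d,d,d,d}=1$, so assume $1\le c\le d-1$. First I would rewrite the determinant in a form where the powers of $p$ can be read off. By Proposition~\ref{32-29-p69},
\[
\det M_{d,c,c,c}=\frac{\binom dc\binom{d+1}c\cdots\binom{d+c-1}c}{\binom cc\binom{c+1}c\cdots\binom{2c-1}c},
\]
so $v_p(\det M_{d,c,c,c})=\sum_{j=0}^{c-1}\bigl(v_p\binom{d+j}c-v_p\binom{c+j}c\bigr)$. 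Writing $v_p\binom mc=\sum_{i=1}^{c}v_p(m-c+i)-v_p(c!)$ and collecting equal summands of the resulting double sum (the term $v_p(d+t)$ arising with multiplicity $c-|t|$), the terms $v_p(c!)$ cancel and one is left with
\[
v_p(\det M_{d,c,c,c})=\sum_{t=-(c-1)}^{c-1}\bigl(c-|t|\bigr)\bigl(v_p(d+t)-v_p(c+t)\bigr)=\sum_{s\ge1}\bigl(N_s(d)-N_s(c)\bigr),
\]
where $N_s(x):=\sum_{|t|\le c-1,\ p^s\mid x+t}(c-|t|)$ is the contribution of the prime power $p^s$ to the total carry count when $c$ is added, in turn, to $d-c,\dots,d-1$ in base $p$ (and $N_s(c)$ the analogous total for $0,\dots,c-1$). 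Since $c-|t|=\#\{(u,v)\in[0,c-1]^2:u-v=t\}$, I would then use the lattice-point description $N_s(x)=\#\{(u,v)\in[0,c-1]^2:u-v\equiv-x\pmod{p^s}\}$, which is symmetric under $x\mapsto-x$ and depends on $x$ only modulo $p^s$.

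I would prove the sharper fact that $N_s(d)=N_s(c)$ for \emph{every} $s\ge1$, which at once gives $v_p(\det M_{d,c,c,c})=0$. Euclidean division $c=Qp^s+\rho$ with $0\le\rho<p^s$ reduces the lattice count to the residue level: counting $[0,c-1]$ by residue classes yields $N_s(x)=(c+\rho)Q+\widetilde N_s(x)$, where $\widetilde N_s(x)=\#\{(u,v)\in[0,\rho-1]^2:u-v\equiv-x\pmod{p^s}\}$ is the same count with the smaller parameter $\rho<p^s$. Because $\rho<p^s$, only the two nearest representatives of the class $-x$ can occur as $u-v$, so
\[
\widetilde N_s(x)=(\rho-a)_+ +\bigl(\rho-(p^s-a)\bigr)_+,\qquad a:=(-x)\bmod p^s,
\]
a function symmetric in $a\mapsto p^s-a$. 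In particular $\widetilde N_s(c)=\max\{0,\,2\rho-p^s\}$, and the \emph{same} value is attained whenever $a\in\{\tfrac{p^s-1}2,\tfrac{p^s+1}2\}$.

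It remains to bring in the base-$p$ expansion of $d$ forced by~(\ref{cond}): writing $q=p^e$ and $d=kq+r$, the digit $k\ (\le\tfrac{p-1}2)$ occupies position $e$, and the lower $e$ digits are all equal to $\tfrac{p-1}2$, except that in the case $r=\tfrac{q+1}2$ the units digit is $\tfrac{p+1}2$. Hence $d\bmod p^s\in\{\tfrac{p^s-1}2,\tfrac{p^s+1}2\}$ for $1\le s\le e$, so $\widetilde N_s(d)=\max\{0,2\rho-p^s\}=\widetilde N_s(c)$ and therefore $N_s(d)=N_s(c)$; while for $s\ge e+1$ both $c$ and $d$ are $<p^{e+1}\le p^s$, so $Q=0$ and $\rho=c$, and the bound $d\le\tfrac{p^{e+1}+1}2$ — which is precisely where the hypothesis $k\le\tfrac{p-1}2$ is used — gives $c+d\le 2d-1\le p^s$ and $2c\le p^s$, forcing $\widetilde N_s(d)=(c-d)_+ +(c+d-p^s)_+ =0=(2c-p^s)_+=\widetilde N_s(c)$. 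This settles every $s$, and hence the lemma.

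The part I expect to demand the most care is not conceptual but the bookkeeping at the sharp end: verifying that the base-$p$ digits of $(q-1)/2$ and of $(q+1)/2$ are exactly as claimed (the lone $\tfrac{p+1}2$ units digit is the only feature distinguishing the two forms in~(\ref{cond})), and justifying the Euclidean-division reduction $N_s(x)=(c+\rho)Q+\widetilde N_s(x)$. As a sanity check one should also look at the extreme configuration $d=\tfrac{p^{e+1}+1}2$ with $c$ near $d$, and handle the degenerate case $e=0$ separately: there $d=k\le\tfrac{p-1}2$, so $d+c-1<p$ and every binomial coefficient in the displayed formula for $\det M_{d,c,c,c}$ has all entries $<p$, whence by Lucas' theorem the numerator and denominator are units modulo $p$ and so is their ratio.
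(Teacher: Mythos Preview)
Your proof is correct and follows the same overall strategy as the paper: both arguments compute the $p$-adic valuation of $\det M_{d,c,c,c}$ via the product formula of Proposition~\ref{32-29-p69}, decompose it as $\sum_{s\ge1}(N_s-D_s)$ where $N_s$ and $D_s$ record the contribution of the prime power $p^s$ to numerator and denominator, and then prove $N_s=D_s$ for every $s$. Your $N_s(d)$ and $N_s(c)$ are exactly the paper's $N_\lambda$ and $D_\lambda$.

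Where the two proofs differ is in how this equality is established. The paper introduces parameters $\rho_\lambda,u_\lambda,\#_\lambda,b_\lambda$, carries out a direct summation with several case distinctions, and arrives at the closed form $N_\lambda=D_\lambda=2c(\#_\lambda+1)-p^\lambda(\#_\lambda+1)^2$. You instead interpret $N_s(x)$ as a lattice-point count $\#\{(u,v)\in[0,c-1]^2:u-v\equiv -x\pmod{p^s}\}$, reduce by Euclidean division to the residual count $\widetilde N_s$, and exploit the symmetry $a\mapsto p^s-a$ together with the fact that $d\bmod p^s\in\{\tfrac{p^s-1}{2},\tfrac{p^s+1}{2}\}$ for $s\le e$. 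This packaging is cleaner and makes the role of the special form of $d$ in~(\ref{cond}) more transparent: the condition says precisely that $d$ is ``centered'' modulo every $p^s$ up to $p^e$, which is exactly what forces $\widetilde N_s(d)=\widetilde N_s(c)$. Your remark about the degenerate case $e=0$ is slightly misstated (one can also have $r=1$, giving $d=k+1\le\tfrac{p+1}{2}$), but your main argument already covers $e=0$ through the $s\ge e+1$ branch, so no separate treatment is needed.
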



\begin{proof}Write $d=kp^e+r$ for integers $k,q,d$ with $1\le k\le \frac{p-1}2$, $r=\frac{p^e+1}2-\epsilon$ where $\epsilon\in\{0,1\}$, and $0\le e$.
 We know from Proposition \ref{32-29-p69}  that 
$$  \det M_{d,c,c,c}= \frac{\binom dc\binom {d+1}c\cdots\binom {d+c-1}c}{\binom cc\binom {c+1}c\cdots\binom {2c-1}c};$$ and therefore, $\det M_{d,c,c,c}$ is equal to 
\begin{equation}\label{.frac}\frac{(d+c-1)^1(d+c-2)^2\cdots(d+1)^{c-1}d^c(d-1)^{c-1}\cdots(d-c+1)^1}{(2c-1)^1(2c-2)^2\cdots(c+1)^{c-1}c^c(c-1)^{c-1}\cdots(1)^1}.\end{equation}
Notice that if $c=d$, then $\det M_{d,c,c,c}$ is  non-zero in $\pmb k$. Henceforth, we assume that $1\le c\le d-1$.

 Let $N$ and $D$ be the    numerator and the denominator of (\ref{.frac}), respectively; and let $o_p(\underline{\phantom{x}})$ be the   
$p$-adic order function, that is, $p^{o_p(N)}$ divides $N$, but $p^{o_p(N)+1}$ does not divide $N$.
 For each positive power $\lambda$, let $N_\lambda=\sum \ell_i$ (respectively,  $D_\lambda=\sum \ell_i$), where the sum is take over all listed factors $v_i^{\ell_i}$ of $N$  (respectively, $D$) from (\ref{.frac}) such that $p^\lambda$ divides $v_i$ in $\Bbb Z$. Observe that 
$$o_p(N)=\sum_{1\le \lambda}N_\lambda\quad \text{and}\quad o_p(D)=\sum_{1\le \lambda}D_\lambda.$$
We show that   $\det M_{d,c,c,c}$ is non-zero in $\pmb k$  by showing $o_p(N)=o_p(D)$ in $\Bbb Z$; indeed, we show that $N_\lambda=D_\lambda$  in $\Bbb Z$, for all positive integers $\lambda$. 

Fix a positive integer $\lambda$ with the property that at least of the integers $N_\lambda$ or $D_\lambda$ is non-zero. In other words,   $p^\lambda$ divides at least one of the listed factors of either $N$ or $D$. Let $\rho_\lambda=\frac{p^\lambda+1}2$. We first observe that  $\lambda\le e$ because 
$$\begin{array}{l}  p^\lambda\le d+c-1\le 2d-2 =2\left(kp^{e}+\frac {p^{e}+1}2-\epsilon\right)-2\\ \phantom{p^\lambda}\le 2 \left (\frac {p-1}2p^{e}+\frac{p^{e}+1}2-\epsilon\right )-2 =p^{e+1}-1-2\epsilon<p^{e+1}.\end{array}$$  Now  we observe that 
\begin{equation}\label{.Impt}d+\epsilon-\rho_\lambda=p^\lambda\cdot u_\lambda,\end{equation}  for some non-negative integer $u_\lambda$. Indeed,
$$d+\epsilon-\rho_\lambda=\left(kp^{e}+\frac {p^{e}+1}2\right)-\frac{p^\lambda+1}2=p^\lambda\left(kp^{e-\lambda}+\frac{p^{e-\lambda}-1}2\right),$$ and we may take $kp^{e-\lambda}+\frac{p^{e-\lambda}-1}2$ to be $u_\lambda$. Finally, we observe that \begin{equation}\label{.rkc}\rho_\lambda\le c.\end{equation} Indeed, we show that if (\ref{.rkc}) fails, then $p^\lambda$ does not divide any of the listed factors of $N$ or $D$. We treat $D$ first. If (\ref{.rkc}) fails, then $2c-1<2\rho_\lambda-1=p^\lambda$ and $p^\lambda$ does not divide any of the listed factors of  $D$. Now we treat $N$. Suppose that (\ref{.rkc}) fails and $\alpha=p^\lambda i$ satisfies $$d-c+1\le \alpha\le d+c-1,$$for some positive integer $i$.
  We know from (\ref{.Impt}) that $d+\epsilon=\rho_\lambda+p^\lambda u_\lambda$; so,
$$\begin{array}{l} p^\lambda u_\lambda=d+\epsilon-\rho_\lambda<d-c+1\le \alpha=p^\lambda i\le d+c-1<d+\rho_\lambda-1\\\phantom{p^\lambda u_\lambda}=(\rho_\lambda+p^\lambda u_\lambda-\epsilon)+\rho_\lambda-1=p^\lambda \rho_\lambda+p^\lambda-\epsilon\le p^\lambda(u_\lambda+1).\end{array}$$ That is,
$p^\lambda u_\lambda<p^\lambda i<p^\lambda(u_\lambda+1)$, which is impossible because the parameters $u_\lambda<i<u_{\lambda}+1$ all are integers. Now that (\ref{.rkc}) is established, we let 
$$ \#_\lambda=\left \lfloor \frac{c-\rho_\lambda}{p^\lambda}\right \rfloor\quad\text{and}\quad  b_\lambda=c-\rho_\lambda- p^\lambda\#_\lambda.$$
Notice that $\#_\lambda$ is a non-negative integer and $0\le b_\lambda\le p^\lambda-1$.  Observe that $D_\lambda=D_\lambda'+D_\lambda''$ with
$$D_\lambda'=\sum\limits_{\left\{\alpha\left\vert 1\le \alpha\le c \text{ and } p^\lambda|\alpha\right.\right\}}\alpha
\qquad\text{and}\qquad D_\lambda''=\sum\limits_{\left\{\alpha\left\vert {c+1\le \alpha\le 2c-1 \text{and } p^\lambda|\alpha}\right.\right\}}(2c-\alpha).$$
We   simplify    $D_\lambda'$. Let $\alpha=i\cdot p^\lambda$, for some positive integer $i$. We add over all $i$ with
$$1\le  i\cdot p^\lambda\le c.$$In other words, $i$ must satisfy:
$$1\cdot p^\lambda\le i\cdot p^\lambda\le  b_\lambda+\rho_\lambda+p^\lambda\#_\lambda.$$
If $b_\lambda+\rho_\lambda<p^\lambda$, then the sum stops at $p^\lambda\#_\lambda$. If $p^\lambda\le b_\lambda+\rho_\lambda$, then the sum stops at $p^\lambda(\#_\lambda+1)$. At any rate,  
$$D_\lambda'=\sum\limits_{i=1}^{\#_\lambda}i\cdot p^\lambda+\chi(p^\lambda\le b_\lambda+\rho_\lambda) p^\lambda(\#_\lambda+1).$$ Our use of ``$\chi$'' is described in (\ref{chi}). The index $i$ in  $D_\lambda''$ must satisfy:
$$1+b_\lambda+\rho_\lambda+p^\lambda\#_\lambda\le i\cdot p^\lambda\le 2(b_\lambda+\rho_\lambda+p^\lambda\#_\lambda)-1=2b_\lambda+(2\#_\lambda+1)p^\lambda$$
If $b_\lambda+\rho_\lambda+1\le p^\lambda$, then the sum starts at $i=\#_\lambda+1$; otherwise, the sum starts at $i=\#_\lambda+2$. The sum always goes at least until $i= 2\#_\lambda+1$. If $p^\lambda\le 2b_\lambda$, then the sum also includes a term for $i= 2\#_\lambda+2$.
Thus,   
$$D_\lambda''=\left\{\begin{array} {l}  \phantom{+}\chi(b_\lambda+\rho_\lambda<p^\lambda)(2c-(\#_\lambda+1)\cdot p^\lambda)+\sum\limits_{i=\#_\lambda+2}^{ 2\#_\lambda+1}(2c-i\cdot p^\lambda)\\+\chi(p^\lambda\le 2b_\lambda)(2c-(2\#_\lambda+2)\cdot p^\lambda).\end{array}\right.$$
 At this point we have
$$D_\lambda=\left\{\begin{array} {l}  \phantom{+}\sum\limits_{i=1}^{\#_\lambda}i\cdot p^\lambda +\chi(p^\lambda\le b_\lambda+\rho_\lambda) p^\lambda(\#_\lambda+1)\\+\chi(b_\lambda+\rho_\lambda<p^\lambda)(2c-(\#_\lambda+1)\cdot p^\lambda)+\sum\limits_{i=\#_\lambda+2}^{ 2\#_\lambda+1}(2c-i\cdot p^\lambda)\\+\chi(p^\lambda\le 2b_\lambda)(2c-(2\#_\lambda+2)\cdot p^\lambda).\end{array}\right.$$ 
Notice that
\begin{equation}\label{t20.1}  \chi(p^\lambda\le b_\lambda+\rho_\lambda)=\chi(p^\lambda\le b_\lambda+{\textstyle\frac{p^\lambda+1}2})=\chi(2\cdot p^\lambda\le 2 b_\lambda+ p^\lambda+1)=\chi( p^\lambda\le 2 b_\lambda +1).\end{equation}Notice also that if $p^\lambda= 2 b_\lambda +1$, then $$\textstyle c=b_\lambda+\rho_\lambda+p^\lambda\#_\lambda =\frac{p^\lambda-1}2+\frac{p^\lambda+1}2+p^\lambda\#_\lambda=(\#_\lambda+1)p^\lambda;$$hence,
$$\chi(p^\lambda= 2b_\lambda+1)(2c-(2\#_\lambda+2)\cdot p^\lambda)=0$$ and 
$$\chi(p^\lambda\le 2b_\lambda)(2c-(2\#_\lambda+2)\cdot p^\lambda)=
\chi(p^\lambda\le 2b_\lambda+1)(2c-(2\#_\lambda+2)\cdot p^\lambda). $$ It follows that 
$$\begin{array}{l}D_\lambda=\left\{\begin{array} {l}  \phantom{+}   \sum\limits_{i=1}^{\#_\lambda}i\cdot p^\lambda +\chi( p^\lambda\le 2 b_\lambda +1) p^\lambda(\#_\lambda+1)\\+\chi(b_\lambda+\rho_\lambda<p^\lambda)(2c-(\#_\lambda+1)\cdot p^\lambda) +\sum\limits_{i=\#_\lambda+2}^{ 2\#_\lambda+1}(2c-i\cdot p^\lambda)\\+(\chi(p^\lambda\le 2b_\lambda+1))(2c-(2\#_\lambda+2)\cdot p^\lambda)\end{array}\right.\\\phantom{D_\lambda}=\left\{\begin{array} {l}  \phantom{+} \sum\limits_{i=1}^{\#_\lambda}i\cdot p^\lambda +\chi(b_\lambda+\rho_\lambda<p^\lambda)(2c-(\#_\lambda+1)\cdot p^\lambda)\\+\sum\limits_{i=\#_\lambda+2}^{ 2\#_\lambda+1}(2c-i\cdot p^\lambda) +\chi(p^\lambda\le 2b_\lambda+1)(2c-(\#_\lambda+1)\cdot p^\lambda).\end{array}\right.\end{array}$$ 
Apply (\ref{t20.1}) again to see that 
 $$\begin{array}{l} D_\lambda=\left\{\begin{array} {l}  \phantom{+} \sum\limits_{i=1}^{\#_\lambda}i\cdot p^\lambda +\chi(b_\lambda+\rho_\lambda<p^\lambda)(2c-(\#_\lambda+1)\cdot p^\lambda)\\+\sum\limits_{i=\#_\lambda+2}^{ 2\#_\lambda+1}(2c-i\cdot p^\lambda)+\chi(p^\lambda\le b_\lambda+\rho_\lambda)(2c-(\#_\lambda+1)\cdot p^\lambda)\end{array}\right.\\ \phantom{D_\lambda}= \sum\limits_{i=1}^{\#_\lambda}i\cdot p^\lambda+\sum\limits_{i=\#_\lambda+1}^{ 2\#_\lambda+1}(2c-i\cdot p^\lambda)\\\phantom{D_\lambda}=2c(\#_\lambda+1)-p^\lambda(\#_\lambda+1)^2.\end{array}$$
Now, we simplify $N_\lambda=N_\lambda'+N_\lambda''$ for $$N_\lambda'=\sum\limits_{\left\{\alpha\left\vert {d-c+1\le \alpha\le d\text{and } p^\lambda|\alpha}\right.\right\}}(c-d+\alpha)\quad\text{and}\quad N_\lambda''=\sum\limits_{\left\{\alpha\left\vert {d+1\le \alpha\le d+c-1\text{and } p^\lambda|\alpha}\right.\right\}}(c+d-\alpha).$$ We continue to write $\alpha=i\cdot p^\lambda$, for some $i$. Recall the integer $u_\lambda$ from (\ref{.Impt}).   Observe that
$$d-c+1\le i\cdot p^\lambda\le d\iff p^\lambda(u_\lambda-\#_\lambda)-b_\lambda-\epsilon+1\le i\cdot p^\lambda \le p^\lambda u_\lambda+\rho_\lambda-\epsilon.$$ In   $N_\lambda'$, the parameter $i$  always stops at $i=u_\lambda$ because $0\le \rho_\lambda-\epsilon<p^\lambda$.
In   $N_\lambda'$, the parameter $i$  begins at $i=u_\lambda-\#_\lambda$; unless $0<-b_\lambda-\epsilon+1$. If $0<-b_\lambda-\epsilon+1$, then  the parameter $i$  does not begin until $i=u_\lambda-\#_\lambda+1$. Thus,  
$$N_\lambda'=-\chi(b_\lambda+\epsilon<1)(c-d+(u_\lambda-\#_\lambda)\cdot p^\lambda) +\sum\limits_{i=u_\lambda-\#_\lambda}^{u_\lambda} (c-d+i\cdot p^\lambda).$$
On the other hand, if  $b_\lambda+\epsilon<1$, then $b_\lambda=\epsilon=0$ and $(c-d+(u_\lambda-\#_\lambda)\cdot p^\lambda)=0$. It follows that
$$ N_\lambda'=\sum\limits_{i=u_\lambda-\#_\lambda}^{u_\lambda} (c-d+i\cdot p^\lambda).$$
We study $N_\lambda''$. When $\alpha=i\cdot p^\lambda$, we use $2\rho_\lambda-1=p^\lambda$ to see that
$$d+1\le \alpha\le d+c-1\iff  p^\lambda\cdot u_\lambda+1+ \rho_\lambda-\epsilon  \le i\cdot p^\lambda\le  p^\lambda\cdot (u_\lambda+\#_\lambda+1)   +b_\lambda -\epsilon.$$ The parameter $i$ in $N_\lambda''$  always begins at $i=u_\lambda+1$ because $0< 1+ \rho_\lambda-\epsilon\le p^\lambda$. 
If $0\le b_\lambda-\epsilon$, then the parameter $i$ in $N_\lambda''$ ends at $u_\lambda+\#_\lambda+1$. If $b_\lambda-\epsilon<0$, then the parameter $i$ ends at $i=u_\lambda+\#_\lambda$. On the other hand, $b_\lambda-\epsilon<0$ only if $b_\lambda=0$ and $\epsilon=1$; and, in this case, the   term from $N_\lambda''$ which corresponds to $i=u_\lambda+\#_\lambda+1$ is $c+d-(u_\lambda+\#_\lambda+1)p^\lambda=0$.  
We conclude that 
$$N_\lambda''=\sum\limits_{i=u_\lambda+1}^{ u_\lambda+\#_\lambda+1}(c+d-ip^\lambda).$$ Thus, 
$$  N_\lambda =\sum\limits_{i=u_\lambda-\#_\lambda}^{u_\lambda} (c-d+i\cdot p^\lambda)+\sum\limits_{i=u_\lambda+1}^{ u_\lambda+\#_\lambda+1}(c+d-ip^\lambda)=2c(\#_\lambda+1)-(\#_\lambda+1)^2p^\lambda.$$ 
We have $N_\lambda=D_\lambda$ and the proof is complete.  \end{proof} 

\section{The WLP for $A(\pmb k,n,d\!:\!n)$ when $5\le n$.}

Our answer to the question ``What is the intuition behind the fact that $A(\pmb k,n,d\!:\!n)$ never has the WLP when $n$ is at least $5$, unless $d$ is very small with respect to the characteristic of $\pmb k$?'',  is contained in the proof of part (1) of Lemma \ref{Lemma25.20}. There are  many ways to produce relations of low degree on
$[x_1^d,\dots,x_{n-1}^d,(x_1+\dots+x_{n-1})^d]$ when $d$ and $n$ are sufficiently large; see especially (\ref{eta}). We nail down the details in Theorem \ref{6.3}. The ultimate result is called Theorem \ref{last}. 

Recall that if $A=\bigoplus_{i\in \mathbb Z} A_i$ is a graded algebra over the field $A_0$, with $A$ finitely generated as an algebra over $A_0$, then the {\it Hilbert function} of $A$ is the function ${\rm H}(A,\underline{\phantom{x}}):\mathbb Z\to \mathbb N$ with ${\rm H}(A,i)$ equal to the dimension of $A_i$ as a vector space over $A_0$. Assertion (1) of Proposition \ref{uni} is a statement about the unimodality of the Hilbert function of a complete intersection. The word ``strictly'' is the key word in the assertion. We have imposed sufficient hypotheses to guarantee that the Hilbert function does not reach a wide plateau before it starts its descent. 
(Notice that $\pmb k[x_1]/(x_1^{a_1})$ has a wide plateau if $2\le a_1$.)
This calculation is well-known by the experts, see for example \cite[Thm. 1]{RRR}. Our interest Hilbert functions is explained in part (2) of Proposition \ref{uni}. Recall, from Corollary \ref{degree}, that in the most natural situation 
$$A(\pmb k,n,\pmb a)\text{ has the WLP}\iff E(n,\pmb a) \le \operatorname{mgd}\, \overline{\mathrm {Syz}}(\pmb k,n,\pmb a).$$ In Proposition \ref{uni} we have identified a hypothesis that guarantees that the reverse inequality automatically holds on the right hand side. Of course, this inequality provides the starting point for the relations of low degree which are built in Lemma \ref{Lemma25.20}.

\begin{Proposition}\label{uni} Fix   $(\pmb k,n,\pmb a)$ as in Data {\rm \ref{xi'}}. 
Let $A=A(\pmb k,n,\pmb a)$.  Assume that $2\le n$ and  $|a_s-a_t|$ is equal to $0$ or $1$  for all indices  $s$ and $t$.
 Then the following statements hold{\rm :}
\begin{itemize}
\item[{\rm(1)}] the Hilbert function ${\rm H}(A,i)$ is a strictly increasing function for $0\le i\le \operatorname{socdeg}(A)/2$, and 
\item[{\rm(2)}] $\operatorname{mgd}\, \overline{\mathrm {Syz}}(\pmb k,n,\pmb a)\le E(n,\pmb a)$.
\end{itemize}\end{Proposition}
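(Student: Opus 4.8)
\textbf{Proof plan for Proposition \ref{uni}.}

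The plan is to establish (1) first and then deduce (2) as a quick numerical consequence. For (1), I would reduce to a statement about the coefficients of the polynomial $H(t)=\prod_{i=1}^n\frac{1-t^{a_i}}{1-t}=\prod_{i=1}^n(1+t+\cdots+t^{a_i-1})$, whose coefficient in degree $i$ is exactly ${\rm H}(A,i)$ (this is the standard Hilbert series of a monomial complete intersection, valid in any characteristic since the $x_i^{a_i}$ form a regular sequence). Because $|a_s-a_t|\le 1$, after relabeling we may assume $a_1=\cdots=a_m=b$ and $a_{m+1}=\cdots=a_n=b+1$ for some $b\ge 1$ and $0\le m\le n$; so $H(t)=(1+\cdots+t^{b-1})^m(1+\cdots+t^b)^{n-m}$. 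The key point is strict unimodality on the first half: the product of finitely many symmetric, unimodal polynomials with nonnegative coefficients is again symmetric and unimodal, and strictness needs to be tracked. I would argue by induction on $n$: multiplying a strictly-unimodal-on-its-first-half symmetric polynomial by one more factor $(1+\cdots+t^{c})$ (a symmetric, unimodal, log-concave polynomial with $c\ge 1$, hence with at least two terms) cannot create a plateau in the increasing range — the only delicate case is near the peak, and the hypothesis $n\ge 2$ together with $b\ge 1$ guarantees at least two genuinely nontrivial factors, which rules out the degenerate ``wide plateau'' behavior of $\pmb k[x_1]/(x_1^{a_1})$. The socle degree is $|\pmb a|-n$ by (\ref{socdeg}), so ``strictly increasing for $0\le i\le \operatorname{socdeg}(A)/2$'' is exactly the statement about the first half of the symmetric polynomial $H(t)$.

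For (2), I would translate using the machinery already in place. By Theorem \ref{Translation}, $\overline{\mathrm {Syz}}(\pmb k,n,\pmb a)\cong K(\pmb k,n,\pmb a)=\ker\bigl(A(-1)\xrightarrow{L}A\bigr)$ with $L=x_1+\cdots+x_n$. So $\operatorname{mgd}\,\overline{\mathrm {Syz}}(\pmb k,n,\pmb a)\le E(n,\pmb a)=\left\lfloor\frac{|\pmb a|-n+3}2\right\rfloor$ is equivalent to the assertion that $K_{i}\ne 0$ for $i=\left\lfloor\frac{|\pmb a|-n+3}2\right\rfloor$, i.e. that multiplication by $L$ from $A_{i-1}$ to $A_i$ is \emph{not} injective, where $i-1=\left\lfloor\frac{|\pmb a|-n+1}2\right\rfloor$. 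Writing $\sigma=\operatorname{socdeg}(A)=|\pmb a|-n$, this source degree is $\lceil\sigma/2\rceil$ (check both parities of $\sigma$ directly), which is in the range where part (1) applies or adjoins it. The point is that ${\rm H}(A,\cdot)$ is strictly increasing up through degree $\lfloor\sigma/2\rfloor$ and symmetric about $\sigma/2$, so ${\rm H}(A,\lceil\sigma/2\rceil)>{\rm H}(A,\lceil\sigma/2\rceil+1)$ when $\sigma$ is odd (the two middle values are equal and then it strictly decreases) — in all cases $\dim A_{\lceil\sigma/2\rceil}>\dim A_{\lceil\sigma/2\rceil+1}$ when we go one step past the peak; I would verify that the relevant source degree $\lceil\sigma/2\rceil$ has target strictly smaller dimension, which forces the multiplication-by-$L$ map $A_{\lceil\sigma/2\rceil}\to A_{\lceil\sigma/2\rceil+1}$ to have nonzero kernel purely by dimension count. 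That nonzero kernel element lives in $K$ in degree $\lceil\sigma/2\rceil+1=E(n,\pmb a)$ (or lower), giving $\operatorname{mgd}\, K\le E(n,\pmb a)$.

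The main obstacle I anticipate is the bookkeeping in (1): proving strict unimodality on the first half of $H(t)$ with the correct handling of the boundary/peak and of the small cases. The cleanest route is probably to invoke that each factor $1+t+\cdots+t^{c}$ is log-concave (its coefficient sequence $1,1,\ldots,1$ is trivially log-concave with no internal zeros), that products of symmetric log-concave nonnegative polynomials are log-concave with no internal zeros, and that a symmetric log-concave sequence with no internal zeros is automatically strictly increasing up to its peak — provided it is not constant, which is exactly where $n\ge 2$ (forcing $\deg H\ge 2$) enters. I would then just need to confirm the off-by-one identities relating $\lfloor\frac{|\pmb a|-n+1}2\rfloor$, $\lceil\sigma/2\rceil$, and $E(n,\pmb a)$ for both parities of $\sigma$, which is routine.
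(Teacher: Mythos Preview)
Your treatment of part (2) is correct and matches the paper's argument: once (1) is in hand, the dimension drop $\dim A_{\lceil\sigma/2\rceil}>\dim A_{\lceil\sigma/2\rceil+1}$ (obtained from strict increase on the first half together with the Gorenstein symmetry $\mathrm H(A,i)=\mathrm H(A,\sigma-i)$) forces the multiplication-by-$L$ map to have nonzero kernel at exactly the degree $E(n,\pmb a)$, via Theorem~\ref{Translation}.

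The gap is in your argument for (1). Your ``cleanest route'' asserts that a symmetric log-concave sequence with no internal zeros, if not constant, is \emph{strictly} increasing up to its peak. This is false: the sequence $1,2,2,2,1$ is symmetric, log-concave ($2^2\ge 1\cdot 2$ and $2^2\ge 2\cdot 2$), has no internal zeros, is not constant, and has a plateau. Worse, this sequence actually arises as the coefficient sequence of $(1+t)(1+t+t^2+t^3)$, i.e.\ of a product of exactly the type you consider; what saves the proposition in that example is that $|a_1-a_2|=|2-4|=2$ violates the hypothesis. So log-concavity alone cannot carry the argument, and the hypothesis $|a_s-a_t|\le 1$ must enter in an essential way, not merely to ensure $\deg H\ge 2$. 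Your inductive sketch has the same problem: multiplying the strictly unimodal $(1+t)^2=1+2t+t^2$ by $1+t+t^2+t^3$ gives $1+3t+4t^2+4t^3+3t^4+t^5$, which acquires a plateau; again the obstruction is that the new exponent is too far from the old ones.

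The paper proves (1) by a direct combinatorial induction on $n$ that does use the hypothesis. After reducing to the case $a_s\ge 2$ for all $s$, one fixes $i$ with $1\le i\le\sigma/2$ and partitions the monomials of $A_{i-1}$ into those not divisible by $x_n^{a_n-1}$ (call this set $S_1$) and those divisible (call it $S_2$), and the monomials of $A_i$ into those divisible by $x_n$ (set $T_1$) and those not (set $T_2$). Multiplication by $x_n$ gives $|S_1|=|T_1|$, so one must show $|S_2|<|T_2|$. Both counts are values of the Hilbert function of $A'=\pmb k[x_1,\dots,x_{n-1}]/(x_1^{a_1},\dots,x_{n-1}^{a_{n-1}})$, namely $|S_2|=\mathrm H(A',i-a_n)$ and $|T_2|=\mathrm H(A',i)$, and the induction hypothesis applied to $A'$ (whose Hilbert function is strictly unimodal and symmetric about $\sigma'/2$) gives the strict inequality once one checks that $i$ is closer to $\sigma'/2$ than $i-a_n$ is. That last step is where the bound $a_n\le a_j+1$ (hence $2a_n\le\sigma+2$) is used.
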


\begin{proof} Let $\sigma=\operatorname{socdeg} (A)$. Assume (1) for the time being. We  show that $(1)\Rightarrow(2)$. The $\pmb k$-algebra $A$ is a standard graded Artinian Gorenstein ring; so the Hilbert function of $A$ satisfies the symmetry
\begin{equation}\label{sym}{\rm H}(A,i)={\rm H}(A,\sigma-i),\quad\text{for all integers $i$.}\end{equation} (The symmetry of ${\rm H}(A,\underline{\phantom{x}})$ is well-known; see, for example, \cite[Thm~9.1]{H99}.) We now have 
$${\textstyle {\rm H}(A,\lfloor \frac{\sigma+3}2\rfloor)= {\rm H}(A,\sigma-\lfloor \frac{\sigma-2}2\rfloor)={\rm H}(A,\lfloor \frac{\sigma-2}2\rfloor)< {\rm H}(A,\lfloor \frac{\sigma}2\rfloor)={\rm H}(A,\sigma-\lfloor \frac{\sigma}2\rfloor)={\rm H}(A,\lfloor \frac{\sigma+1}2\rfloor).}$$The inequality in the middle follows from (1); the inner equalities follow from (\ref{sym}) and the outer equalities amount to calculations with rational numbers. At any rate, the vector space $A_{\lfloor \frac{\sigma+3}2\rfloor}$ has less dimension than $A_{\lfloor \frac{\sigma+1}2\rfloor}$ has; multiplication by $L=x_1+\dots+x_{n}$ from $A_{\lfloor \frac{\sigma+1}2\rfloor}$ to $A_{\lfloor \frac{\sigma+3}2\rfloor}$ is not injective; and the minimal generator degree of the kernel of $L:A(-1)\to A$ is at most $\lfloor \frac{\sigma+3}2\rfloor$. (As always, we have $A(-1)_{\lfloor \frac{\sigma+3}2\rfloor}=A_{\lfloor \frac{\sigma+1}2\rfloor}$.) In other words, $${\textstyle \operatorname{mgd}\, K(\pmb k,n,\pmb a)\le  \left\lfloor \frac{\sigma+3}2\right\rfloor.}$$ Apply Theorem \ref{Translation}, (\ref{socdeg}), and Notation \ref{E} to obtain the conclusion of (2).

Now we prove (1). It suffices to prove the result when $2\le a_s$ for all $s$, because, by deleting all indices $s$ with $a_s=1$, one obtains new data $(\pmb k,n',\pmb a')$ with $A(\pmb k,n,\pmb a)=A(\pmb k,n',\pmb a')$ and $2\le a_s'$ for all $s$. It is possible that $n'$ is equal to $0$ or $1$; but in this case, some of the original $a_i$ were equal to $1$ and $A$ is equal to $\pmb k$ or $\pmb k[x_1]/(x_1^2)$. The assertion that ${\rm H}(A,i)$ is a strictly increasing function for $0\le i\le \sigma/2$ is not very interesting for these rings $A$, but it is true.   Henceforth, we assume that $2\le a_s$ for all $s$ and $2\le n$.

Fix an index $i$ with $1\le i\le \sigma/2$. We show that ${\rm H}(A,i-1)<{\rm H}(A,i)$. The proof is by induction on $n$. Assume first that $n=2$. In this case, $\sigma=a_1+a_2-2$, and the hypothesis that $a_2$ differs from $a_1$ by at most $1$ guarantees that $\lfloor\sigma/2\rfloor\le \min\{a_1-1,a_2-1\}$. It follows that 
$${\rm H}(A,i-1)={\rm H}(\pmb k[x_1,x_2],i-1)=i<i+1={\rm H}(\pmb k[x_1,x_2],i)={\rm H}(A,i).$$ Henceforth, we assume that $3\le n$. Partition the monomials of $A_{i-1}$ into two sets  $S_1\cup S_2$, where $S_1$ consists of those monomials that are not divisible by $x_n^{a_n-1}$ and $S_2$ consists of those monomials that are divisible by $x_n^{a_n-1}$. In a similar manner, we partition the monomials of $A_{i}$ in two  sets $T_1\cup T_2$, where $T_1$ consists of those monomials divisible by $x_n$ and $T_2$ consists of those monomials not divisible by $x_n$. We see that 
${\rm H}(A,i-1)=|S_1|+|S_2|$ and ${\rm H}(A,i)=|T_1|+|T_2|$, where $|\text{``set''}|$ is the number of elements of ``set''.
Observe that multiplication by $x_n$ gives a bijection between $S_1$ and $T_1$. To prove the result, it suffices to show that $|S_2|<|T_2|$.

Let $A'=\pmb k[x_1,\dots,x_{n-1}]/
(x_1^{a_1},\dots,x_{n-1}^{a_{n-1}})$ and $\sigma'=\operatorname{socdeg} A'$. We see that $|S_2|={\rm H}(A',i-a_n)$ and $|T_2|={\rm H}(A',i)$. The induction hypothesis applies to $A'$; furthermore, the Hilbert function ${\rm H}(A',\underline{\phantom{x}})$ is symmetric about $\sigma'/2$. To prove the result if suffices to prove
\item[(1)] $i\le \sigma'$, and 
\item[(2)] $|i-\sigma'/2|<|(i-a_n)-\sigma'/2|$.

\noindent  We start with $i\le \sigma/2$; thus,  to show (1), it suffices to show that ${\sigma/2\le \sigma'}$. We see that $\sigma'=\sigma-(a_n-1)$. It suffices to show $\sigma/2\le \sigma-(a_n-1)$; hence, it suffices to show that $2a_n\le \sigma+2$; and this is clear because the hypotheses ensure that $3\le n$ and $a_n\le a_j+1$ for all $j$.  

To prove (2) it is useful to consider the three rational numbers $\lambda<\mu<\nu$ with $\lambda=\sigma'/2$, $\mu=\sigma/2$, and $\nu=\sigma'/2+a_n$. We see that $\mu-\lambda=(a_n-1)/2$ and $\nu-\mu=(a_n+1)/2$; and therefore, \begin{equation}\label{lmn} 0\le \mu-\lambda\le \nu-\mu.\end{equation} The hypothesis gives $i<\mu$. We must prove $|i-\lambda|<|i-\nu|$. The triangle inequality, together with (\ref{lmn}), gives
$$|i-\lambda|=|(i-\mu)+(\mu-\lambda)|\le |i-\mu|+|\mu-\lambda|\le |i-\mu|+(\nu-\mu)=(\mu-i)+(\nu-\mu)=\nu-i=|\nu-i|.$$\end{proof}

\begin{Lemma}\label{Lemma25.20}Consider the data $(\pmb k,n,d)$, where $\pmb k$ is a field of positive characteristic $p$, and $d$ and $n$ are   integers with $1\le d$ and $2\le n$. Write $d=kq+r$ for integers $k,q,r$ with  $0\le r\le q-1$ and $q=p^e$, for some positive integer $e$.
\begin{itemize}
\item[{\rm(1)}]If $1\le k$ and $\ell$ is an integer with $0\le \ell \le n-1$, then 
$$\operatorname{mgd}\, \overline{\mathrm {Syz}}(\pmb k,n,d\!:\!n)\le \left\lfloor\frac{n(k-1)+\ell+3}2\right\rfloor q +r(n-\ell).$$
\item[{\rm(2)}]If $k=1$, then 
$\operatorname{mgd}\, \overline{\mathrm {Syz}}(\pmb k,n,d\!:\!n)\le q+nr$.
\item[{\rm(3)}]If   $\frac q2<d\le q$, then $\operatorname{mgd}\, \overline{\mathrm {Syz}}(\pmb k,n,d\!:\!n)\le q$.
\item[{\rm(4)}]If $k=0$, $p=q$, and $p-d\le (n-1)(d-1)$, then $\operatorname{mgd}\, \overline{\mathrm {Syz}}(\pmb k,n,d\!:\!n)\le p$.
\item[{\rm(5)}]If   $p=3$ and $d=4$, then  $\operatorname{mgd}\, \overline{\mathrm {Syz}}(\pmb k,n,d\!:\!n)\le 9$.
\end{itemize}
\end{Lemma}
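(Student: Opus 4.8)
The plan is to reduce every part to exhibiting an explicit non-zero homogeneous element of small degree in $J(\pmb k,n-1,d\!:\!(n-1),d)=\frac{(x_1^d,\dots,x_{n-1}^d)\!:\!L^d}{(x_1^d,\dots,x_{n-1}^d)}$, where $L=x_1+\dots+x_{n-1}$. Indeed, the degree-preserving isomorphism $\overline{\mathrm{Syz}}(\pmb k,n,\pmb a)\cong J(\pmb k,n-1,(a_1,\dots,a_{n-1}),a_n)(-a_n)$ from the proof of Corollary \ref{degree} (it sends $[f_1,\dots,f_n]^{\mathrm t}$ to $f_n$) gives, for $\pmb a=d\!:\!n$, the identity $\operatorname{mgd}\overline{\mathrm{Syz}}(\pmb k,n,d\!:\!n)=\operatorname{mgd} J(\pmb k,n-1,d\!:\!(n-1),d)+d$; so a non-zero class of degree $B-d$ in $J$ proves $\operatorname{mgd}\overline{\mathrm{Syz}}(\pmb k,n,d\!:\!n)\le B$. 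The case $n=2$ is harmless: then $J(\pmb k,1,(d),d)=\pmb k[x_1]/(x_1^d)$, so $\operatorname{mgd}\overline{\mathrm{Syz}}(\pmb k,2,d\!:\!2)=d$, which one checks is $\le$ the bound asserted in each of (1)--(5). Two facts are used repeatedly: the Frobenius identity $L^{p^e}=x_1^{p^e}+\dots+x_{n-1}^{p^e}$, and $L^{p^e}\in(x_1^d,\dots,x_{n-1}^d)$ whenever $p^e\ge d$.

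\textbf{Parts (3), (4), (5): powers of $L$.} If $m\ge0$ and $m+d$ is a power of $p$ that is $\ge d$, then $L^{m+d}\in(x_1^d,\dots,x_{n-1}^d)$, so the class of $L^m$ lies in $J$ in degree $m$, and it is non-zero exactly when $L^m\notin(x_1^d,\dots,x_{n-1}^d)$; the latter holds iff $(x_1+\dots+x_{n-1})^m$ has a monomial $x_1^{c_1}\cdots x_{n-1}^{c_{n-1}}$ with all $c_i\le d-1$ and $\binom{m}{c_1,\dots,c_{n-1}}\ne0$ in $\pmb k$. For (3), $d\le q$, so take $m=q-d$ ($m+d=q$): since $q<2d$ we have $m<d$ and $(x_1^d,\dots,x_{n-1}^d)$ vanishes below degree $d$, so the class is non-zero and $\operatorname{mgd}\overline{\mathrm{Syz}}\le q$. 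For (4), $q=p$ and $1\le d\le p-1$, so take $m=p-d$ ($m+d=p$): because $p-d<p$, every multinomial coefficient $\binom{p-d}{c_1,\dots,c_{n-1}}$ is a unit of $\pmb k$ (numerator and denominators are factorials of integers below $p$), so it suffices to split $p-d$ into $n-1$ summands each $\le d-1$, which is possible precisely because $p-d\le(n-1)(d-1)$; hence $\operatorname{mgd}\overline{\mathrm{Syz}}\le p$. For (5), $p=3$, $d=4$, take $m=5$ ($m+d=9=3^2\ge4$); for $n\ge3$ one checks $L^5\notin(x_1^4,\dots,x_{n-1}^4)$ by setting $x_3=\dots=x_{n-1}=0$ and computing $(x_1+x_2)^5\equiv x_1^3x_2^2+x_1^2x_2^3\not\equiv0\pmod{(x_1^4,x_2^4)}$, so $\operatorname{mgd}\overline{\mathrm{Syz}}\le9$.

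\textbf{Parts (1), (2): Frobenius-twisting a balanced syzygy.} Put $\pmb b=(k+1\!:\!\ell,\,k\!:\!(n-\ell))$; its entries differ by at most $1$ and (as $k\ge1$) are positive, and $\ell\le n-1$ forces $b_n=k$, so Proposition \ref{uni} yields $\operatorname{mgd}\overline{\mathrm{Syz}}(\pmb k,n,\pmb b)\le E(n,\pmb b)=\lfloor\tfrac{n(k-1)+\ell+3}{2}\rfloor$, hence a non-zero homogeneous $\eta_0=[v_1,\dots,v_n]^{\mathrm t}\in\mathrm{Syz}(\pmb k,n,\pmb b)$ of that degree with last component $v_n\notin(x_1^{b_1},\dots,x_{n-1}^{b_{n-1}})$. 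Writing $y_i=x_i$ for $i<n$ and $y_n=L$, raise $\sum_i v_i\,y_i^{\,b_i}=0$ to the $q$-th power and multiply by $M=\prod_{j\,:\,b_j=k}y_j^{\,r}$; using $d=kq+r=(k+1)q-(q-r)$ and regrouping, one gets a syzygy $\eta=[w_1,\dots,w_n]^{\mathrm t}$ on $[x_1^d,\dots,x_{n-1}^d,L^d]$ with $w_i=(M/y_i^{\,r})\,v_i^{\,q}$ when $b_i=k$ and $w_i=M\,y_i^{\,q-r}\,v_i^{\,q}$ when $b_i=k+1$. A direct degree count gives $\deg\eta=q\deg\eta_0+(n-\ell)r\le q\lfloor\tfrac{n(k-1)+\ell+3}{2}\rfloor+(n-\ell)r$. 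Finally $\eta$ is non-Koszul, because its last component $w_n=\bigl(\prod_{j\,:\,b_j=k,\,j<n}x_j^{\,r}\bigr)v_n^{\,q}$ is not in $(x_1^d,\dots,x_{n-1}^d)$: if $x_1^{c_1}\cdots x_{n-1}^{c_{n-1}}$ (all $c_i<b_i$) occurs in $v_n$ with non-zero coefficient, then $x_1^{qc_1}\cdots x_{n-1}^{qc_{n-1}}$ survives in $v_n^{\,q}$ (Frobenius), and multiplying by that monomial leaves every exponent below $d$ (it is $\le q(k-1)+r<d$ at a position with $b_j=k$, and $\le qk<d$ at one with $b_j=k+1$, the strict inequalities holding once $r\ge1$; when $r=0$ only $\ell=0$ is needed, and the stated bound for $\ell>0$ then follows since $\lfloor\tfrac{n(k-1)+\ell+3}{2}\rfloor$ is non-decreasing in $\ell$). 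This proves (1); part (2) is the case $k=1$, $\ell=0$.

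\textbf{Expected obstacle.} The delicate part is the bookkeeping in (1): choosing $M$ so that the Frobenius-twisted relation lands exactly on $[x_1^d,\dots,x_{n-1}^d,L^d]$, computing its degree, and above all certifying that the twist remains non-Koszul by tracking a single surviving monomial of the last component through the $q$-th power and the multiplication and checking its exponents stay below $d$ (where the hypotheses on $r$ and on $\ell\le n-1$ enter). For (3)--(5) the only genuine content is the non-vanishing of the chosen power of $L$ modulo $(x_1^d,\dots,x_{n-1}^d)$, which in (4) is exactly where $p-d\le(n-1)(d-1)$ is consumed and in (5) is a one-line computation.
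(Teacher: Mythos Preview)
Your proof is correct and follows essentially the same approach as the paper's. The paper works directly in $\overline{\mathrm{Syz}}$ while you pass through the isomorphism to $J(\pmb k,n-1,d\!:\!(n-1),d)(-d)$, but since that isomorphism is projection onto the last coordinate this is purely cosmetic: your witness $L^{q-d}$ in parts (3)--(5) corresponds exactly to the paper's explicit syzygy $[x_1^{q-d},\dots,x_{n-1}^{q-d},-L^{q-d}]^{\mathrm t}$, and your Frobenius-twisted element in (1)--(2) is the same $\eta$ the paper writes out, with the same monomial-tracking argument for non-Koszulness and the same reduction of the $r=0$ case to $\ell=0$.
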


\begin{proof}  Let $Q$ be the polynomial ring $\pmb k[x_1,\dots,x_{n-1}]$ and let $D$ represent the data $(\pmb k,n,\pmb a)$ with $\pmb a$ equal to $(d\!:\!n)$. For each assertion, we exhibit a non-zero element of $\overline{\mathrm {Syz}}(D)$ of the appropriate degree.

\medskip\noindent {\bf (1)}\quad Fix integers $k$ and $\ell$ with $1\le k$ and $0\le \ell \le n-1$. 
Assume further that either
\begin{equation}\label{either} \ell=r=0\qquad\text{or}\qquad 1\le r\le q-1\ \text{and}\ 0\le \ell\le n-1.
\end{equation}
Observe that once (1) is established for $\ell=r=0$, then (1) also holds for $0\le \ell\le n-1$ and $r=0$. Indeed, when $r=0$, the minimum of the set $\left\{\left\lfloor\frac{n(k-1)+\ell+3}2\right\rfloor q +r(n-\ell)\mid 0\le \ell \le n-1\right\}$ is $\left\lfloor\frac{n(k-1)+3}2\right\rfloor q$ and this value is attained when $\ell=0$.

Let $D'$ represent the data $(\pmb k,n,\pmb a')$ with $\pmb a'=((k+1)\!:\!\ell,k:(n-\ell))$
  Observe that Proposition \ref{uni} may be applied to the data $D'$. Conclude that 
$$\operatorname{mgd}\, \overline{\mathrm {Syz}}(D')\le \left\lfloor \frac{\ell(k+1)+(n-\ell)k-n+3}2\right\rfloor= \left\lfloor\frac{n(k-1)+\ell+3}2\right\rfloor.$$ Let $\eta'\in \mathrm {Syz}(D')$ be a homogeneous representative of a non-zero element of $\overline{\mathrm {Syz}}(D')$ of degree $\operatorname{mgd}\, \overline{\mathrm {Syz}}(D')$. We have $\eta'=[v_1,\dots,v_n]^{\rm t}\in \mathrm {Syz}(D')$, for some homogeneous polynomials $v_i$ in $\pmb k[x_1,\dots,x_{n-1}]$, with  $\xi(D')\cdot \eta'=0$ and 
\begin{equation}\label{hyp} v_n\notin(x_1^{k+1},\dots,x_{\ell}^{k+1},x_{\ell+1}^k,\dots,x_{n-1}^k)Q.\end{equation}Apply the Frobenius endomorphism to obtain the equation \begin{equation}\label{eq}[\xi(D')]^{[q]}\cdot [\eta']^{[q]}=0\end{equation} with $[\xi(D')]^{[q]}=[x_1^{(k+1)q},\dots,x_{\ell}^{(k+1)q},x_{\ell+1}^{kq},\dots,x_{n-1}^{kq},(x_1+\dots+x_{n-1})^{kq}]$ and
$[\eta']^{[q]}=[v_1^q,\dots,v_n^q]^{\rm t}$. Multiply equation (\ref{eq}) by $x_{\ell+1}^r\cdots x_{n-1}^{r} (x_1+\dots+x_{n-1})^r$ to obtain 
\begin{equation}\label{eta}\eta= \left[\begin{array}{l} 
v_1^qx_1^{q-r}x_{\ell+1}^rx_{\ell+2}^r\cdots x_{n-1}^{r} (x_1+\dots+x_{n-1})^r\\
\phantom{xxxxxxx}\vdots \\
v_\ell^qx_\ell^{q-r}x_{\ell+1}^rx_{\ell+2}^r\cdots x_{n-1}^{r} (x_1+\dots+x_{n-1})^r\\
v_{\ell+1}^q\phantom{x_\ell^{q-r}xx}x_{\ell+2}^r\cdots x_{n-1}^{r} (x_1+\dots+x_{n-1})^r\\
\phantom{xxxxxxx}\vdots \\
v_{n}^q\phantom{x_\ell^{q-r}}x_{\ell+1}^rx_{\ell+2}^r\cdots x_{n-1}^{r} \end{array}\right]\end{equation} in $\mathrm {Syz}(D)$. In other words, $\eta$ is a homogeneous element of $Q(-d)^n$ which is in the kernel of $\xi(D)=[x_1^d,\dots,x_{n-1}^d,(x_1+\dots+x_{n-1})^d]$. It is clear that $$\deg \eta=(\deg \eta')q+r(n-\ell)\le \left\lfloor\frac{n(k-1)+\ell+3}2\right\rfloor q +r(n-\ell).$$ It remains to show that $\eta\notin \mathrm {Kos}(D)$. In other words, it remains   to show that $v_{n}^qx_{\ell+1}^rx_{\ell+2}^r\cdots x_{n-1}^{r}$ is not an element of the ideal  $(x_1^d,\dots,x_{n-1}^d)Q$. The original hypothesis (\ref{hyp}) about $v_n$ guarantees that there is a monomial $m=x_1^{e_1}\dots x_{n-1}^{e_{n-1}}$ which appears in $v_n$ with a non-zero coefficient and for which $$\begin{cases}e_i<k+1&\text{for $1\le i\le \ell$ and}\\e_i<k&\text{for $\ell+1\le i\le n-1$.}\end{cases}$$ Let $M$ be the monomial $m^qx_{\ell+1}^rx_{\ell+2}^r\cdots x_{n-1}^{r}$. We see that $M$ appears in $v_{n}^qx_{\ell+1}^rx_{\ell+2}^r\cdots x_{n-1}^{r}$ with a non-zero coefficient. We must show that  
$M$ is not in the ideal  $(x_1^d,\dots,x_{n-1}^d)Q$. Write
$M=x_1^{\epsilon_1}\cdots x_{n-1}^{\epsilon_{n-1}}$ for
$$\epsilon_i=\begin{cases} e_iq&\text{for $1\le i\le \ell$}\\e_iq+r&\text{for $\ell+1\le i\le n-1$}\end{cases}$$
We see that $\epsilon_i<d$
$$\left\{\begin{array}{lll}  \text{provided $1\le r$}&\text{for $1\le i\le \ell$,}&\text{because $d=kq+r$ and $e_i< k+1$}\\
\text{without any restrictions}&\text{for $\ell+1 \le i\le n-1$,}&\text{because $d=kq+r$ and $e_i< k$}.\end{array}\right.$$The assumptions of (\ref{either}) are in effect; hence,  $\epsilon_i<d$ for all $i$ and the proof of (1) is complete. 

\medskip\noindent {\bf (2)}\quad Proceed as in (1), with $\ell=0$, until reaching $\eta$ as given in (\ref{eta}). Notice that $\eta'=[1,\dots,1,(-1)]$; so $v_n^q=(-1)^q$ and $v_n^qx_1^r\cdots x_{n-1}^r\notin(x_1^d,\dots,x_{n-1}^d)$ because $r<q+r=d$. Thus, $\eta$ represents a non-zero element of $\overline{\mathrm {Syz}}(D)$ of degree $q+nr$.

 \medskip\noindent {\bf (3)}\quad Let $\eta=[x_1^{q-d},\dots,x_{n-1}^{q-d},-(x_1+\dots+x_{n-1})^{q-d}]^{\rm t}$ in $Q(-d)^n$. It is clear that $\eta$ is in the kernel of $\xi(D)$; hence $\eta$ is an element of $\mathrm {Syz}(D)$ of degree $q$. The hypothesis that $\frac q2<d$ guarantees that $q-d<d$ and therefore $\eta\notin \mathrm {Kos}(D)$. 

\medskip\noindent {\bf (4)}\quad Take $\eta$ as described in (3). Notice that the hypothesis $p-d<(n-1)(d-1)$ guarantees that $(x_1+\dots+x_{n-1})^{p-d}\notin(x_1^d,\dots,x_{n-1}^d)$; therefore, $\eta$ represents a non-zero element of $\overline{\mathrm {Syz}}(D)$ of degree $p$. 

\medskip\noindent {\bf (5)}\quad Take $\eta$ as described in (3) with $q=9$. Notice that $x_1^2x_2^3$ appears in $(x_1+\dots+x_{n-1})^5$ with the  non-zero coefficient 1. Thus, $(x_1+\dots+x_{n-1})^{5}\notin(x_1^4,\dots,x_{n-1}^4)$ and 
$\eta$ represents a non-zero element of $\overline{\mathrm {Syz}}(D)$ of degree $9$. 
\end{proof}

\begin{Theorem}\label{6.3} Fix the data $(\pmb k,n,d)$, where $\pmb k$ is a field of positive characteristic $p$, and $d$ and $n$ are positive integers with $p<E(n,d\!:\!n)$ and $5\le n$. Then the following statements hold{\rm:}
\begin{enumerate}
\item $\operatorname{mgd}\, \overline{\mathrm {Syz}}(\pmb k,n,d\!:\!n)< E(n,d\!:\!n)$, and  \item if the field $\pmb k$ is infinite, then 
the ring $A(\pmb k,n,d\!:\!n)$ does not have the WLP.\end{enumerate}
\end{Theorem}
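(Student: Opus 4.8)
The plan is to obtain statement (2) as a formal consequence of statement (1) and Corollary~\ref{degree}, and to prove (1) by exhibiting, via Lemma~\ref{Lemma25.20}, a non-zero element of $\overline{\mathrm {Syz}}(\pmb k,n,d\!:\!n)$ of degree strictly less than $E(n,d\!:\!n)$. For (2): granting (1), condition (4) of Corollary~\ref{degree} fails for the data $(\pmb k,n,d\!:\!n)$, so, the field being infinite, the equivalence $(4)\Leftrightarrow(6)$ there shows that $A(\pmb k,n,d\!:\!n)$ does not have the WLP. Thus everything rests on (1). First I would unwind the hypothesis: by Notation~\ref{E}, $E(n,d\!:\!n)=\left\lfloor\frac{n(d-1)+3}2\right\rfloor$, so $p<E(n,d\!:\!n)$ is equivalent to $2p-1\le n(d-1)$; in particular $d\ge2$, since $n\ge5$ and $2p-1\ge3$.

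I would then split the proof of (1) according to where $d$ sits among the powers of $p$. \emph{Case (a): some $q=p^e$ with $e\ge1$ satisfies $\frac q2<d\le q$.} (This happens automatically when $p=2$, and exactly when $d$ lies in $\bigl(\tfrac{p^f}2,p^f\bigr]$, where $p^f$ is the least power of $p$ that is $\ge d$.) Here Lemma~\ref{Lemma25.20}(3) gives $\operatorname{mgd}\,\overline{\mathrm {Syz}}(\pmb k,n,d\!:\!n)\le q<2d$, and a one-line check from $n\ge5$, $d\ge2$ shows $2d\le\left\lfloor\frac{n(d-1)+3}2\right\rfloor=E(n,d\!:\!n)$. \emph{Case (b): there is no such $q$ and $d<p$} (equivalently $d\le\tfrac p2$). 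Then Lemma~\ref{Lemma25.20}(4) applies with $q=p$, $k=0$, $r=d$, because $(n-1)(d-1)=n(d-1)-(d-1)\ge(2p-1)-(d-1)=2p-d\ge p-d$; hence $\operatorname{mgd}\,\overline{\mathrm {Syz}}\le p<E(n,d\!:\!n)$. \emph{Case (c): there is no power of $p$ in $[d,2d)$ and $d>p$.} Then $p\ge3$, the least $f$ with $p^f\ge d$ satisfies $f\ge2$, and $p^{f-1}<d\le\tfrac{p^f}2$; set $q=p^{f-1}\ (\ge p)$ and write $d=kq+r$ with $0\le r\le q-1$, so that $d\le\tfrac{pq}2$ forces $1\le k\le\tfrac{p-1}2$.

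In Case (c) I would apply Lemma~\ref{Lemma25.20}(1): for every $\ell$ with $0\le\ell\le n-1$,
$$\operatorname{mgd}\,\overline{\mathrm {Syz}}(\pmb k,n,d\!:\!n)\ \le\ \left\lfloor\frac{n(k-1)+\ell+3}2\right\rfloor q+r(n-\ell).$$
Since $q$ is a power of the odd prime $p$, $2r\ne q$; one takes $\ell$ as small as possible (using $\ell=1$ rather than $\ell=0$ when $n(k-1)+3$ is even, which does not change the floor term but saves an $r$) when $2r<q$, and $\ell=n-1$ when $2r>q$ (which, by $d\le\tfrac{pq}2$, forces $k\le\tfrac{p-3}2$). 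When $k=1$ one may instead invoke the sharper bound $\operatorname{mgd}\,\overline{\mathrm {Syz}}\le q+nr$ of Lemma~\ref{Lemma25.20}(2). In each instance, comparing integer parts — keeping the floors, not their real-number upper bounds — and using $2p-1\le n(d-1)$, $n\ge5$, $1\le k\le\tfrac{p-1}2$, and $d\le\tfrac{pq}2$, yields a bound $<E(n,d\!:\!n)$. Lemma~\ref{Lemma25.20}(2) and (5) are available to absorb any residual small pair $(p,d)$ (notably $(3,4)$) for which the generic estimate is too coarse.

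The main obstacle is Case (c), and within it the borderline regime $r\approx\tfrac q2$: there the naive real-valued comparison of the two bounds barely fails, so one is forced to track the floor functions carefully and to use the structural constraints $k\le\tfrac{p-1}2$ and $d\le\tfrac{pq}2$ (which bound $r$ away from $q$ when $k$ is large). The appearance of the ad hoc bound in Lemma~\ref{Lemma25.20}(5) is precisely the warning that one or two genuinely exceptional small $(p,d)$ will not yield to the uniform argument and must be handled by hand.
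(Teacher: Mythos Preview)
Your overall architecture is sound and matches the paper: reduce (2) to (1) via Corollary~\ref{degree}, then prove (1) by producing low-degree syzygies through Lemma~\ref{Lemma25.20}. Your Cases (a) and (b) are correct and in fact organize things a bit more cleanly than the paper's Cases~1--3.

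The gap is in Case~(c). Your proposed dichotomy ``take $\ell$ small when $2r<q$, and $\ell=n-1$ when $2r>q$'' is too coarse and does not always give a bound below $E(n,d\!:\!n)$. A concrete counterexample: take $n=5$, $p=q=7$, $k=2$, $r=4$, so $d=18$; then $2r=8>7=q$, but with $\ell=n-1=4$ the bound from Lemma~\ref{Lemma25.20}(1) is
\[
\left\lfloor\tfrac{5(2-1)+4+3}{2}\right\rfloor\cdot 7+4\cdot 1=6\cdot 7+4=46,
\]
whereas $E(5,18\!:\!5)=\left\lfloor\tfrac{5\cdot 17+3}{2}\right\rfloor=44$. (The same failure occurs for $n=6$, same $p,d$.) This case is neither ``small'' nor covered by Lemma~\ref{Lemma25.20}(2) or~(5), so your safety net does not catch it. The correct choice here is $\ell=3$, which gives $5\cdot7+8=43<44$.

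What this example reveals is that the right $\ell$ depends not only on the sign of $2r-q$ but also on the parity of $k$ (and on $n$): increasing $\ell$ by one may or may not increase the floor $\left\lfloor\tfrac{n(k-1)+\ell+3}{2}\right\rfloor$, and you must choose $\ell$ so that it does \emph{not}. The paper's proof handles this by a finer case analysis (its Cases~6--9): for $n=5$ it uses $\ell\in\{1,2,3,4\}$ keyed to the parity of $k$; for $n=6$ it uses $\ell\in\{2,4\}$; and for $n\ge7$ it uses $\ell$ near $n/2$, again adjusted by parity. Your sketch would need to incorporate this parity-dependent refinement of the choice of $\ell$ before Case~(c) goes through.
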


\begin{proof}  In this proof, we write $A$ and $\overline{\mathrm {Syz}}$ in place of $A(\pmb k,n,d\!:\!n)$ and $\overline{\mathrm {Syz}}(\pmb k,n,d\!:\!n)$, respectively. In light of Corollary \ref{degree}, it suffices to prove (1). 
The proof is carried out by analyzing  a large number of cases. In each case, we apply Lemma \ref{Lemma25.20} to estimate an upper bound for $\operatorname{mgd}\, \overline{\mathrm {Syz}}$.

Case 1. Assume that $d=q$, for some $q=p^e$, where $e$ is an integer with $1\le e$. Part (3) of Lemma~\ref{Lemma25.20} gives $\operatorname{mgd}\, \overline{\mathrm {Syz}}\le q$. We see that  $q<\frac{5(q-1)+3}2\le \left\lfloor \frac{n(d-1)+3}2\right\rfloor=E(n,d\!:\!n)$. 

Henceforth in this proof, we may assume that $d$ is not equal to a pure power of $p$.

Case 2. Assume  that $p=2$. The hypothesis $p<E(n,d\!:\!n)$ is equivalent, when $p=2$, to the hypothesis $2\le d$; therefore, we may identify an integer $e$ with $2\le e$ and $2^{e-1}<d< 2^e$. Part (3) of Lemma \ref{Lemma25.20} shows that $\operatorname{mgd}\, \overline{\mathrm {Syz}}\le 2^e$. On the other hand, we see that
\begin{equation}\label{new}2^e=\frac {2^{e+1}}2< \frac {4(2^{e-1})+3}2\le \left\lfloor \frac{n(d-1)+3}2\right\rfloor=E(n,d\!:\!n).\end{equation} In (\ref{new}), we used the inequality $4\le n$; and therefore, we  have shown that $$\operatorname{mgd}\, \overline{\mathrm {Syz}} (\pmb k,n,d\!:\!n)<  E(n,d\!:\!n)$$  when $\pmb k$ is a field of characteristic $2$, $2\le d$, and $n=4$; see Remark \ref{char2}.

Henceforth in this proof, we may assume that $p$ is an odd prime.

Case 3. Assume $d<p$. The hypothesis $p<\lfloor \frac{n(d-1)+3}2\rfloor$ guarantees that $2\le d$; and therefore,
$$p-d<p<\left\lfloor \frac{n(d-1)+3}2\right\rfloor\le   \frac{n(d-1)+3}2 \le \frac{n(d-1)+3(d-1)}2=\frac{n+3}2(d-1)\le(n-1)(d-1). $$Apply part (4) of Lemma \ref{Lemma25.20} to see that $\operatorname{mgd}\, \overline{\mathrm {Syz}}\le p$, which is less than $E(n,d\!:\!n)$ by hypothesis.  

Henceforth in this proof, we may assume that $p<d$. (The case $p=d$ is covered in Case 1.)

Case 4. Assume $p=3$ and $4\le d\le 8$. If $n=5$ and $d=4$, then part (2) of Lemma \ref{Lemma25.20} gives $\operatorname{mgd}\, \overline{\mathrm {Syz}}\le 8$, which is less than $9=E(n,d\!:\!n)$. Throughout the rest of Case 4 we assume that $6\le n$ or $5\le d$. Parts (3) and (5) of Lemma \ref{Lemma25.20} give $\operatorname{mgd}\, \overline{\mathrm {Syz}}\le 9$. If $6\le n$, then $$\textstyle{9<10=\lfloor\frac{6\cdot 3+3}2\rfloor\le \lfloor \frac{n(d-1)+3}2\rfloor=E(n,d\!:\!n).}$$ If $5\le d$, then $9<11=\lfloor\frac{5\cdot 4+3}2\rfloor\le \lfloor \frac{n(d-1)+3}2\rfloor=E(n,d\!:\!n)$.  

Before we consider Case 5, we lay out the plan of attack that will be used in the main body of the argument. As noted in Case 3, we may assume that  $p< d$. Throughout the rest of the proof, we write $d$ in the form    
$$\text{$d=kq+r$ for integers $k,q,r$ with $1\le k$, $0\le r\le q-1$ and $q=p^e$, for some positive integer $e$.}$$ Cases 4 and 2 show that we need only consider $q$ that are at least $5$.   
Part (1) of   Lemma \ref{Lemma25.20} shows that 
$$\operatorname{mgd}\, \overline{\mathrm {Syz}}\le \left\lfloor\frac{n(k-1)+\ell+3}2\right\rfloor q +r(n-\ell),$$ for each integer $\ell$ with $0\le \ell\le n-1$. So, if 
\begin{equation}\label{floor}\left\lfloor\frac{n(k-1)+\ell+3}2\right\rfloor q +r(n-\ell)<\left\lfloor \frac{n(d-1)+3}2\right\rfloor,\end{equation}for some integer $\ell$, with $0\le \ell\le n-1$, then 
$\operatorname{mgd}\, \overline{\mathrm {Syz}}\le E(n,d\!:\!n)$ and the proof is complete for the data $(\pmb k,n,d)$.
 We have $$\frac{n(kq+r-1)+2}2=\frac{n(d-1)+2}2\le \left\lfloor \frac{n(d-1)+3}2\right\rfloor.$$
We also have $$\left\lfloor\frac{n(k-1)+\ell+3}2\right\rfloor
=\frac{n(k-1)+\ell+3-\chi(\text{$n(k-1)+\ell$ is even})}2;$$
 so the inequality (\ref{floor}) is implied by
$$\left(\frac{n(k-1)+\ell+3-\chi(\text{$n(k-1)+\ell$ is even})}2\right)q +r(n-\ell)<\frac{n(kq+r-1)+2}2;$$which is equivalent to 
if \begin{equation}\label{floor2} r (  n  -2\ell )+n-2<(n-\ell-3+\chi(\text{$n(k-1)+\ell$ is even}) )q   .\end{equation}
We have shown that if  (\ref{floor2}) holds  
for some integer $\ell$, with $0\le \ell\le n-1$, then the proof is complete for the data $(\pmb k,n,d)$.

Case 5. Assume $r=0$ and $5\le q$. If $\ell=0$, then \begin{equation}\label{boring}(\ref{floor2})\iff  n-2<(n-3+\chi(\text{$n(k-1)$ is even}) )q.\end{equation} To show the right side of (\ref{boring}), it suffices to observe
$(n-2)<(n-3)5$, and this is clear. Thus, (\ref{floor2}) holds in this case. 

Henceforth in this proof, we may assume that $1\le r$.

Case 6. Assume  $n=5$ and $5\le q$. 
If $\ell=4$, $n=5$, and $k$ is odd, then 
\begin{equation}\label{25.10}{\textstyle (\ref{floor2})\iff \frac 13q+1<r.}\end{equation}
 If $\ell=3$, $n=5$, and $k$ is even,  then
\begin{equation}\label{25.11}(\ref{floor2})\iff   3<r.\end{equation}
If $\ell=2$, $n=5$, and $k$ is odd,  then
\begin{equation}\label{25.12}(\ref{floor2})\iff r<q-3.\end{equation}
If $\ell=1$, $n=5$, and $k$ is even,  then
\begin{equation}\label{25.13}(\ref{floor2})\iff r<{\textstyle \frac 23} q-1.\end{equation}
If $7\le q$ and $k$ is odd, then $\frac 13q+1<q-3$; hence (\ref{25.10}) and (\ref{25.12}) show that   (\ref{floor2}) holds.  
If $q=5$, $k$ is odd,  and $r\neq 2$, then  (\ref{25.10}) and (\ref{25.12}) again show that     (\ref{floor2}) holds.   If $7\le q$ and $k$ is even, then $3<\frac 23 q-1$; hence (\ref{25.11}) and (\ref{25.13})  show that     (\ref{floor2}) holds. If $q=5$, $k$ is even, and $r\neq 3$ then  again (\ref{25.11}) and (\ref{25.13})  show that    (\ref{floor2}) holds. 
It is still necessary to consider $q=5$ and $d$ equal to $1\cdot 5+2$, $3\cdot 5+2$, $2\cdot 5+3$, and $4\cdot 5+3$. If $d=7$, then part (2) of Lemma \ref{Lemma25.20} gives $\operatorname{mgd}\, \overline{\mathrm {Syz}}\le 15<16=E(5,7\!:\!5)$.
 If $d$ is $13$, $17$, or $23$, then $\frac {25}2<d<25$; thus, part (3) of Lemma \ref{Lemma25.20} gives $\operatorname{mgd}\, \overline{\mathrm {Syz}}\le 25<31=\lfloor \frac{5(13-1)+3}2\rfloor\le E(5,d\!:\!5)$. Thus, the inequality of assertion (1) from the statement of Theorem \ref{6.3} holds whenever $n=5$ and $q\le 5$, 

Case 7. Assume  $n=6$ and $5\le q$. 
If $\ell=4$, then $$(\ref{floor2})\iff 2<r.$$ If $\ell=2$, then $$(\ref{floor2})\iff r<q-2.$$ We have $2<q-2$; so, (\ref{floor2}) holds always under the hypotheses of Case 7.

Case 8. Assume $n$ is odd,  $7\le n$, $5\le q$, and $1\le r$. Let $\chi_0=\chi(\text{$k$ is odd})$. If $\ell=3+\chi_0$, then 
$$(\ref{floor2})\iff r(n-6-2\chi_0)+n-2<q(n-6-\chi_0+\chi(\text{$k+\chi_0$ is even})).$$We see that $k+\chi_0$ is always even; therefore $\chi(\text{$k+\chi_0$ is even})=1$ and (\ref{floor2}) is equivalent to 
\begin{equation}\label{25.12'}r(n-6-2\chi_0)+n-2<q(n-5-\chi_0).\end{equation}
If $(n-6-2\chi_0)$ is negative, then $n=7$, $\chi_0=1$, and (\ref{25.12'}) holds. Otherwise,  $0\le(n-6-2\chi_0)$ 
  and $$r(n-6-2\chi_0)\le (q-1)(n-6-2\chi_0),$$since  $r\le q-1$. To prove (\ref{25.12'}), it suffices to 
prove $$(q-1)(n-6-2\chi_0)+n-2<(n-5-\chi_0)q$$ and this equivalent to 
  $4+2\chi_0<(1+\chi_0)q$. The most recent inequality holds because $5\le q$ and $\chi_0$ is either $0$ or $1$. 
So, (\ref{floor2}) holds always under the hypotheses of Case 8.

Case 9. Assume  $n$ is even,  $8\le n$, and $5\le q$. If $\ell=\frac{n}2$, then 
\begin{equation}\label{n=8}{\textstyle (\ref{floor2})\iff n-2<(\frac n2-3+\chi(\text{$\frac n2$ is even}))q.}\end{equation}
The inequality on the right side of (\ref{n=8}) holds when $n=8$ because $6<(2)5\le 2q$. For $10\le n$, it suffices to observe that $n-2<(\frac  n2-3)5$; and this is clear.
  \end{proof}

\begin{Theorem} \label{last}Fix the data $(\pmb k,n,d)$, where $\pmb k$ is a field of positive characteristic $p$, and $d$ and $n$ are positive integers with $5\le n$. Then the following statements hold{\rm:}
\begin{enumerate}
\item $E(n,d\!:\!n)\le   \operatorname{mgd}\,\overline{\mathrm {Syz}}(\pmb k,n,d\!:\!n)
\iff E(n,d\!:\!n)\le p$, 
and  \item if the field $\pmb k$ is infinite, then 
the ring $A(\pmb k,n,d\!:\!n)$ has the WLP if and only if $\left\lfloor\frac{n(d-1)+3}2\right\rfloor\le p$.\end{enumerate}
\end{Theorem}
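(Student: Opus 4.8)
The plan is to prove Theorem~\ref{last} by combining the two halves already assembled in the paper, so that essentially no new computation is required. First I would record the elementary identity
\[
E(n,d\!:\!n)=\left\lfloor\frac{n(d-1)+3}2\right\rfloor,
\]
which is immediate from Notation~\ref{E} since $|d\!:\!n|=nd$; this makes assertions (1) and (2) literally the same statement once we know (via Corollary~\ref{degree} together with Remark~\ref{after-degree}) that, for an infinite field, $A(\pmb k,n,d\!:\!n)$ has the WLP if and only if $E(n,d\!:\!n)\le\operatorname{mgd}\,\overline{\mathrm{Syz}}(\pmb k,n,d\!:\!n)$. Recall that Remark~\ref{after-degree} applies here because $\operatorname{mgd}\,\mathrm{Kos}(\pmb k,n,d\!:\!n)=2d$ is the sum of the two smallest entries of $d\!:\!n$, and $E(n,d\!:\!n)=\lfloor\frac{n(d-1)+3}2\rfloor\le 2d$ already forces us into the regime where $\overline{\mathrm{Syz}}$ and $\mathrm{Syz}$ agree in the relevant low degrees — so in fact statement (1) may equivalently be read with $\overline{\mathrm{Syz}}$ replaced by $\mathrm{Syz}$.

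Next I would prove the forward direction of (1): if $E(n,d\!:\!n)\le\operatorname{mgd}\,\overline{\mathrm{Syz}}(\pmb k,n,d\!:\!n)$, then $E(n,d\!:\!n)\le p$. This is the contrapositive of Theorem~\ref{6.3}(1): assuming $p<E(n,d\!:\!n)$ and $5\le n$, that theorem gives $\operatorname{mgd}\,\overline{\mathrm{Syz}}(\pmb k,n,d\!:\!n)<E(n,d\!:\!n)$, which contradicts the hypothesis. So nothing new is needed here beyond citing Theorem~\ref{6.3}.

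For the converse direction of (1) — if $E(n,d\!:\!n)\le p$ then $E(n,d\!:\!n)\le\operatorname{mgd}\,\overline{\mathrm{Syz}}(\pmb k,n,d\!:\!n)$ — I would invoke Theorem~\ref{5<=n}. Its hypothesis is exactly $\lfloor\frac{n(d-1)+3}2\rfloor\le p$, i.e. $E(n,d\!:\!n)\le p$; its conclusion (1) is that inequality~(\ref{Stan}) holds for the data $(\pmb k,n-1,d\!:\!(n-1),d)$, and by Corollary~\ref{degree} (the equivalence of (5) and (4)) this is precisely $E(n,d\!:\!n)\le\operatorname{mgd}\,\overline{\mathrm{Syz}}(\pmb k,n,d\!:\!n)$. (One should double-check that Theorem~\ref{5<=n} is stated for $3\le n$, so it certainly covers $5\le n$.) Finally, assertion (2) follows by feeding (1) through the WLP criterion of Corollary~\ref{degree}/Remark~\ref{after-degree} quoted above, rewriting $E(n,d\!:\!n)\le p$ as $\lfloor\frac{n(d-1)+3}2\rfloor\le p$.

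The honest remark to make is that there is no real obstacle in \emph{this} theorem — all the difficulty has been pushed into Theorem~\ref{6.3} (the necessary direction, where the many-cases Frobenius argument of Lemma~\ref{Lemma25.20} is expended) and into Theorem~\ref{5<=n}, which in turn rests on Corollary~\ref{Cor-big-n}, Lemma~\ref{29.4}, and the fact (Proposition~\ref{32-29-p69}) that $\det M_{t,b,s,s}\neq0$ in $\pmb k$ whenever $t+s\le p$; the point that makes the $5\le n$ case clean, as the introduction notes, is that in this range every integer appearing in the factorization of $\det M_{d,c,c,c}$ is already $<p$, so no delicate counting of powers of $p$ is needed. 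Thus the proof of Theorem~\ref{last} itself is purely a bookkeeping assembly, and the only thing to be careful about is matching the index conventions (which entry of $\pmb a$ becomes the ``$(x_1+\dots+x_{n-1})^{a_n}$'' column) when passing between the $n$-variable and $(n-1)$-variable formulations in Corollary~\ref{degree}.
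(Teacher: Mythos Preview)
Your proposal is correct and follows essentially the same assembly as the paper's own proof: reduce (2) to (1) via Corollary~\ref{degree}, get the forward implication of (1) as the contrapositive of Theorem~\ref{6.3}(1), and get the backward implication from Theorem~\ref{5<=n}(1) together with the equivalence (4)$\iff$(5) in Corollary~\ref{degree}.

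One inessential slip to flag: your claim that $E(n,d\!:\!n)=\lfloor\frac{n(d-1)+3}{2}\rfloor\le 2d=\operatorname{mgd}\,\mathrm{Kos}(\pmb k,n,d\!:\!n)$ is false for $n\ge 5$ once $d\ge 3$ (for instance $n=5$, $d=3$ gives $E=6>2d$), so Remark~\ref{after-degree} does not apply and statement~(1) cannot in general be read with $\mathrm{Syz}$ in place of $\overline{\mathrm{Syz}}$. Fortunately this aside is unnecessary: Corollary~\ref{degree} already gives, for infinite $\pmb k$, the equivalence of the WLP with condition~(4), i.e.\ with $E(n,d\!:\!n)\le\operatorname{mgd}\,\overline{\mathrm{Syz}}(\pmb k,n,d\!:\!n)$, without any appeal to Remark~\ref{after-degree}. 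Simply drop that parenthetical and your argument is clean and matches the paper's.
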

\begin{proof}The integer $E(n,d\!:\!n)$ is equal to $\left\lfloor\frac{n(d-1)+3}2\right\rfloor$; thus, 
in light of Corollary \ref{degree} it suffices to establish (1). The direction 
$$ \operatorname{mgd}\,\overline{\mathrm {Syz}}(\pmb k,n,d\!:\!n)< E(n,d\!:\!n)\Leftarrow p<E(n,d\!:\!n)$$ is Theorem \ref{6.3}. The direction 
$$ \text{inequality (\ref{Stan}) for $(\pmb k,n-1,d\!:\!(n-1),d)$}\Leftarrow E(n,d\!:\!n)\le p$$ is Theorem \ref {5<=n}. Of course,
$$\text{inequality (\ref{Stan}) for $(\pmb k,n-1,d\!:\!(n-1),d)$}\iff E(n,d\!:\!n)\le   \operatorname{mgd}\,\overline{\mathrm {Syz}}(\pmb k,n,d\!:\!n)$$is (5)$\iff$(4) in Corollary \ref{degree}.
\end{proof}

\end{document}